\pgfplotsset{compat=1.18}
\DeclarePairedDelimiter\ceil{\lceil}{\rceil}
\DeclarePairedDelimiter\floor{\lfloor}{\rfloor}
\newcolumntype{x}[1]{>{\centering\arraybackslash\hspace{0pt}}p{#1}}
\theoremstyle{definition}
\newtheorem{theorem}{Theorem}[section]
\newtheorem{definition}[theorem]{{{Definition}}}
\newtheorem{example}[theorem]{{{Example}}}
\newtheorem{remark}[theorem]{{{Remark}}}
\newtheorem{corollary}[theorem]{{{Corollary}}}
\newtheorem{proposition}[theorem]{{{Proposition}}}
\newtheorem{lemma}[theorem]{{{Lemma}}}
\newtheorem{construction}{{{Construction}}}
\newcommand{\numberset}{\mathbb}
\newcommand{\Z}{\numberset{Z}}
\newcommand{\C}{\mathcal{C}}
\newcommand{\F}{\numberset{F}}
\newcommand{\wt}{\textnormal{wt}}
\newcommand{\Fq}{\F_q}
\newcommand{\Fqm}{\mathbb{F}_{q^{m}}}
\newcommand{\Fqmn}{\mathbb{F}_{q^{m}}^{n}}
\newcommand{\cc}{\mathbf{c}}
\newcommand{\xx}{\mathbf{x}}
\newcommand{\vv}{\mathbf{v}}
\newcommand{\uu}{\mathbf{u}}
\newcommand{\w}{\textnormal{w}}
\newcommand{\rk}{\textnormal{rk}}
\DeclareMathOperator{\GL}{GL}
\DeclareMathOperator{\lf}{\lfloor}
\DeclareMathOperator{\rf}{\rfloor}
\newtheorem*{maintheorem}{Main Theorem}
\newcommand{\nkdqm}{[n,k,d]_{q^m/q}}
\newcommand{\nkqm}{[n,k]_{q^m/q}}
\title{The maximum number of nonzero weights of linear rank-metric codes}
\author[C. Castello]{Chiara Castello}
\address{Chiara Castello, \textnormal{Dipartimento di Matematica e Fisica,
Universit\`a degli Studi della Campania ``Luigi Vanvitelli'',
I--\,81100 Caserta, Italy.}}
\email{chiara.castello@unicampania.it}
\author[P. Santonastaso]{Paolo Santonastaso}
\address{Paolo Santonastaso, \textnormal{Dipartimento di Matematica e Fisica,
Universit\`a degli Studi della Campania ``Luigi Vanvitelli'',
I--\,81100 Caserta, Italy\newline
Dipartimento di Meccanica, Matematica e Management, 
Politecnico di Bari, 
70125 Bari, Italy.}}
\email{paolo.santonastaso@poliba.it}
\author[M. Scotti]{Martin Scotti}
\address{Martin Scotti, \textnormal{Aalborg University, \newline
Department of Mathematical Sciences, 
Thomas Manns Vej 23,
9220 Aalborg Øst, Denmark.}}
\email{msco@math.aau.dk}
\begin{document}

\begin{abstract}
We investigate the maximum number \( L_{\rk}(n, m, k, q) \) of distinct nonzero rank weights that an \( \F_{q^m} \)-linear rank-metric code of dimension \( k \) in \( \F_{q^m}^n \) can attain. We determine the exact value of the function \( L_{\rk}(n, m, k, q) \) for all admissible parameters \( n, m, k, q \). In particular, we characterize when a code achieves the \emph{full weight spectrum} (FWS), i.e. when the number of distinct nonzero rank weights equals \( \min\{n, m\} \). We provide both necessary and sufficient conditions for the existence of FWS codes, along with explicit constructions of codes attaining the maximum number of distinct weights. We discuss the equivalence of such codes and also present classification results for 2-dimensional codes. Finally, we investigate further properties of these optimal codes, like their behavior under duality. 
\end{abstract}

\maketitle

\noindent \textbf{MSC2020:}  94B05; 11T71; 94B65 \\
\textbf{Keywords:} rank-metric code; weight spectrum; full-weight spectrum code

\section{Introduction}

The study of the \emph{weight spectrum} of codes has a rich and fruitful history, with investigations carried out from various perspectives and for multiple purposes over the years. In \cite{delsarte1973four}, Delsarte investigated the size of a code with a fixed number of distinct distances, which, in the case of linear codes, corresponds to analyzing the size of a code with a fixed number of distinct (nonzero) weights.  On the other hand, in \cite{macwilliams1963theorem}, MacWilliams considered the foundational problem of whether, given a set $S$ of positive integers, it is possible to construct a code whose set of nonzero weights coincides exactly with $S$, and provided partial answers to this question, along with necessary conditions on the set $S$.
In \cite{gorla2023integer}, this problem was recently revisited in the context of \emph{generalized weights} under different metrics. Other discussions on the structure and cardinality of the weight set appear in \cite{enomoto1987codes,slepian1956class}. 

For the Hamming metric case, Shi et al.\ in \cite{shi2019many} investigated the more general combinatorial question of determining the maximum number $L_H(k, q)$ of distinct nonzero Hamming weights that a linear code of dimension $k$ over the finite field $\mathbb{F}_q$ with $q$ elements can attain. They provided the upper bound
\begin{equation} \label{eq:hammingL(k,q)}
L_H(k, q) \leq \frac{q^k - 1}{q - 1}.
\end{equation}
This bound was shown to be tight in the binary case in \cite{haily2015binary}, and it was further proven in \cite{shi2019many} to be sharp for all $q$-ary linear codes of dimension $k = 2$. The authors then conjectured that the bound is tight for all values of $q$ and $k$. This conjecture was later proved by Alderson and Neri in \cite{alderson2018maximum}, where the existence of such codes was proven for all pairs $(k, q)$, provided that the length $n$ is sufficiently large. Linear codes achieving the bound in \eqref{eq:hammingL(k,q)} are referred to as \textbf{maximum weight spectrum (MWS) codes}. A natural refinement of the problem considers the combinatorial function $L_H(n, k, q)$, defined as the maximum number of distinct nonzero weights that a linear code can exhibit for a fixed length $n$. Since the Hamming weight of any codeword cannot exceed $n$, it follows trivially that \[
L_H(n, k, q) \leq n.\] In \cite{alderson2019note}, Alderson completely characterized the parameters for which this upper bound is attained, and such codes were called \textbf{full weight spectrum (FWS) codes}. However, when the length $n$ is fixed, determining the exact value of $L_H(n, k, q)$ seems challenging, and partial results have been obtained in \cite{shi2019many,alderson2019note}. These types of combinatorial questions have been further extended to linear codes endowed with general coordinate-wise weight functions, such as the Lee weight and the Manhattan-type weight \cite{alderson2025mws}. More recently, FWS codes have also been explored in the setting of \emph{subspace codes} \cite{castello2024full,shi2025new}.

In the same spirit as the above-mentioned combinatorial problems for the weight spectrum of codes, this paper introduces and investigates analogous questions in the context of rank-metric codes. 

In recent years, rank-metric codes have attracted significant attention due to their wide range of applications and their deep connections to rich mathematical structures. The concept of rank-metric codes was first introduced by Delsarte in 1978~\cite{delsarte1978bilinear}, and was later independently rediscovered by Gabidulin~\cite{gabidulin1985theory} and Roth~\cite{roth1991maximum}. Since then, these codes have found applications in different areas such as criss-cross error correction, cryptography, and network coding. Moreover, rank-metric codes are intimately connected with various well-established algebraic and combinatorial objects, including semifields, linear sets in finite geometry, tensorial and skew algebras, $q$-analogs of matroids, and several others. For a comprehensive treatment of both the practical and theoretical aspects of rank-metric codes, we refer the reader to the surveys~\cite{gorla2018codes,sheekeysurvey,bartz2022rank,gruica2023rank}.

In this paper, we consider linear rank-metric codes. Let $\mathbb{F}_{q^m}$ denote the extension of degree $m$ of the finite field $\mathbb{F}_q$. For a vector $\mathbf{v} = (v_1, \ldots, v_n) \in \mathbb{F}_{q^m}^n$, the \textbf{(rank) weight} $\mathrm{w}(\mathbf{v})$ is defined as the $\mathbb{F}_q$-dimension of the $\mathbb{F}_q$-vector space generated by its entries, i.e.
\[
\mathrm{w}(\mathbf{v}) := \dim_{\mathbb{F}_q} \langle v_1, \ldots, v_n \rangle_{\mathbb{F}_q}.
\]
A \textbf{(linear) rank-metric code} $\mathcal{C}$ is a $k$-dimensional $\mathbb{F}_{q^m}$-linear subspace of $\mathbb{F}_{q^m}^n$, equipped with the \textbf{rank distance} defined by
\[
d(\mathbf{u}, \mathbf{v}) := \mathrm{w}(\mathbf{u} - \mathbf{v}),
\]
for all $\mathbf{u}, \mathbf{v} \in \mathcal{C}$.

\subsection{Our Contribution} In this work, we investigate the maximum number of distinct nonzero weights that a linear rank-metric code can attain. Analogously to the Hamming metric case, we introduce and study the combinatorial function $L_{\mathrm{rk}}(n, m, k, q),$
which denotes the maximum number of distinct nonzero rank weights that a linear rank-metric code of dimension $k$ in $\mathbb{F}_{q^m}^n$ can have.
A first trivial upper bound is given by
\begin{equation} \label{eq:trivialrankbound}
L_{\mathrm{rk}}(n, m, k, q) \leq \min\{m, n\}.
\end{equation}
In analogy with the Hamming metric case, we refer to rank-metric codes attaining this bound as \textbf{full weight spectrum (FWS)} rank-metric codes. In other words, an FWS rank-metric code has the same set of weights as the ambient space $\mathbb{F}_{q^m}^n$. We determine necessary and sufficient conditions for the existence of such codes. These results can be seen as the rank-metric counterpart of those established in~\cite{alderson2019note} for the Hamming metric.

In addition, in the case where $L_{\mathrm{rk}}(n, m, k, q) < \min\{m, n\}$, we determine the exact value of the function for every admissible choice of parameters $n, m, k, q$, under the assumption that the code is \emph{nondegenerate}, meaning that it cannot be isometrically embedded into a smaller ambient space. In particular, this imposes the additional restriction $n \leq km$.
We divide our analysis into two cases, depending on the length $n$. First, we analyze the case \(n \le m\).
We begin by providing a lower bound on \(L_{\mathrm{rk}}(n,m,k,q)\) by explicitly
constructing codes whose weight spectrum has a given size. Since
\(L_{\mathrm{rk}}(n,m,k,q)\) is defined as the maximum possible size of the
rank-weight spectrum, the existence of such codes immediately implies that
\(L_{\mathrm{rk}}(n,m,k,q)\) is at least this value
(see Construction~\ref{constr:casenleqm} and Theorem~\ref{thm:constrnleqm}).
Then, using a combinatorial argument on the possible rank-weight spectra of linear
rank-metric codes, we show that this bound is also an upper bound on
\(L_{\mathrm{rk}}(n,m,k,q)\) when \(n \le m\)
(see Theorem~\ref{th:upperboundn<m}). As expected, in this regime the function grows linearly with $n$. We then turn to the case \(n > m\). As in the previous case, we first establish a lower bound on
\(L_{\mathrm{rk}}(n,m,k,q)\) by explicitly constructing codes whose weight
spectrum attains this value
(see Construction \ref{constr:casen>m} and Theorem \ref{thm:constrn>m}). Next, adopting a geometric perspective and exploiting the interpretation of
rank-metric codes in terms of \(q\)-systems introduced in
\cite{Randrianarisoa2020ageometric}, we prove that this bound is also an upper bound
on \(L_{\mathrm{rk}}(n,m,k,q)\) in the case \(n > m\)
(see Theorem \ref{th:boundcasen>m}). As a byproduct of our investigations, we will obtain, for every fixed choice of $k$, $m$, and $q$, the maximum amount of nonzero weights for the special case $m=n$. Interestingly, when $n > m$ and $n$ increases, the function $L_{\mathrm{rk}}(n, m, k, q)$ does not decrease uniformly. Instead, its behavior strongly depends on the ratio $n/m$. Notably, in contrast with the Hamming case, where the asymptotic behavior of $L_H(n, k, q)$ as $n \to \infty$ converges to $L(k, q)$ cf. \cite[Theorem 4]{shi2019many}, in the rank-metric setting, the function $L_{\mathrm{rk}}(n, m, k, q)$ exhibits different and more intricate behavior. In particular, as $n$ approaches $km$, the maximum number of distinct weights tends to 1. In this extremal case, the code corresponds to the simplex code in the rank metric \cite{Randrianarisoa2020ageometric,alfarano2022linear}. Also, while the exact values of $L_H(n, k, q)$ remain unknown for different values of $n$ in the Hamming case, in this paper, for the rank-metric case, we are able to completely determine the values of $L_{\mathrm{rk}}(n, m, k, q)$ for all admissible parameters, along with explicit constructions of codes attaining them.

\smallskip
Our main result is the following. 

\begin{maintheorem} \label{thm:maintheorem}
For all $n, m, k, q$ with $n \leq km$, the function $L_{\mathrm{rk}}(n, m, k, q)$ satisfies
$$
L_{\mathrm{rk}}(n, m, k, q) =
\begin{cases}
n - \max\left\{ \left\lfloor \dfrac{n}{2^{k-1}} \right\rfloor, 1 \right\} + 1 & \text{if } n \leq m, \\[1em]
m - \max\left\{ \left\lfloor \dfrac{n - (k - t - 2)m}{2^{t+1}} \right\rfloor, 1 \right\} + 1 & \text{if } n > m,
\end{cases}
$$
where $t = \left\lfloor k - \dfrac{n}{m} \right\rfloor$.
\end{maintheorem}

We also investigate the equivalence problem for such codes, showing that there exist non-equivalent rank-metric codes achieving the maximum number of distinct weights. Moreover, by establishing a connection with the classification result for 
\emph{full weight spectrum one-orbit cyclic subspace codes} given in 
\cite[Theorem 1.2]{castello2024full}, we provide a classification result for 
2-dimensional codes, when $n \leq m$, whose weight spectrum have size 
$L_{\mathrm{rk}}(n, m, 2, q)$.
Finally, we discuss various properties of codes attaining the maximum number of 
distinct weights, showing that this class of codes does not fall into the class of 
MRD codes (unless \(k = 1\)), and we investigate their behavior under duality.

\subsection{Organization of the paper}The paper is organized as follows. In Section \ref{sec:preliminaries}, we introduce the general framework and preliminary notions required to study the analogue of the maximum weight spectrum problem in the context of rank-metric codes. Sections \ref{sec:nleqm} and \ref{sec:n>m} are devoted to determining the exact value of the function $L_{\mathrm{rk}}(n, m, k, q)$ in the cases $n \leq m$ and $n > m$, respectively. In Section \ref{section:nonequivalence}, we discuss the equivalence issue for codes 
having the maximum number of distinct weights, and we provide a classification result for 2-dimensional codes with 
\(n \le m\). Finally, Section \ref{sec:conclusions} provides concluding remarks and presents open problems for future research.
In \Cref{appendix}, we explore futher properties of codes attaining the maximum number of distinct weights.

\section{The Setting}
\label{sec:preliminaries}

We consider \emph{linear rank-metric codes}, i.e. $k$-dimensional $\mathbb{F}_{q^m}$-linear subspaces of $\mathbb{F}_{q^m}^n$ endowed with the \emph{rank distance}. We also write that $\mathcal{C}$  is an $\nkdqm$ code, where $d$ denotes its \textbf{minimum distance}, defined as
\[
d = d(\C) := \min \{ d(\mathbf{c}_1, \mathbf{c}_2) : \mathbf{c}_1, \mathbf{c}_2 \in \mathcal{C},\ \mathbf{c}_1 \ne \mathbf{c}_2 \}.
\]
If the minimum distance is not known or not relevant to the context, then we simply refer to $\mathcal{C}$ as an $[n,k]_{q^m/q}$ code. A \textbf{generator matrix} for $\mathcal{C}$ is a matrix $G \in \mathbb{F}_{q^m}^{k \times n}$ such that $\mathcal{C} = \{ \mathbf{x} G : \mathbf{x} \in \mathbb{F}_{q^m}^k \}$.
We say that an $[n,k]_{q^m/q}$ code is \textbf{nondegenerate} if the $\F_q$-linear column span of any generator matrix of $\C$ has dimension $n$. Finally, \textbf{the weight spectrum of a code $\C$} is defined as the set of all nonzero weights of its codewords, and it will be denoted by \[
\mathrm{WS}(\C):=\{\w(\cc) \colon \cc \in \C, \cc \neq \mathbf{0}\}.
\]

The focus of this paper is on studying the function $L_{\mathrm{rk}}(n,m,k,q)$, defined as the maximum number of distinct nonzero weights that a $k$-dimensional $\mathbb{F}_{q^m}$-linear rank-metric code in $\mathbb{F}_{q^m}^n$ can attain. We first note that, fixing only the parameters $k$, $q$, and $m$ is not sufficient to define a meaningful notion of maximum number of weights in the rank metric. This is due to the fact that the rank weight of any codeword is upper bounded by $m$, regardless of $n$. Moreover, readapting for linear rank-metric codes the same argument as in the proof of \cite[Proposition 2]{shi2019many}, we know that the number of distinct nonzero weights of an $[n,k]_{q^m/q}$ code is at most
\[
\frac{q^{km} - 1}{q^m - 1}.
\]
However, this upper bound exceeds $m$ when $k > 1$, and thus cannot be tight. In fact, the inequality
\[
m \geq \frac{q^{km} - 1}{q^m - 1} > q^{m(k-1)} \geq m(k-1)
\]
implies $k = 1$, which corresponds to trivial codes.

This shows that, unlike in the Hamming case, it is not possible to construct linear rank-metric codes with $\frac{q^{km} - 1}{q^m - 1}$ distinct weights when $k > 1$. Therefore, a direct analogue of maximum weight spectrum (MWS) codes does not naturally extend to the rank-metric setting. A more refined discussion of this topic will be given in Section~\ref{sec:conclusions}, where we discuss the definition of MWS codes in the rank-metric context. As a consequence, it becomes meaningful to fix the length $n$ and study the function $L_{\rk}(n, m, k, q)$. A first, trivial upper bound is given by
\[
L_{\rk}(n, m, k, q) \leq \min\{m, n\}.
\]

In analogy with the Hamming metric case, we define \textbf{full weight spectrum (FWS)} rank-metric codes $\C$ as those attaining this upper bound, or, in other words,\[\mathrm{WS}(\mathcal{C})=\{1,2,\ldots,\min\{m,n\}\}.\]

Hence, an FWS rank-metric code achieves the full range of nonzero weights available in the ambient space $\mathbb{F}_{q^m}^n$. 

Clearly, \( L_{\rk}(m, n, 1, q) = 1 \) and \( L_{\rk}(m, k, k, q) = \min\{k,m\} \). Therefore, in what follows, we assume \( n>k > 1 \).

We now present a family of linear rank-metric codes that will serve as a key tool for constructing examples with the maximum number of distinct nonzero weights.
Let $\lambda$ be a generator of the field extension $\mathbb{F}_{q^m}$ over $\mathbb{F}_q$, and let $\ell \leq m$ be a positive integer. We define the vector
\[
\mathbf{u}_{\lambda,\ell} := (1, \lambda, \lambda^2, \ldots, \lambda^{\ell-1}) \in \mathbb{F}_{q^m}^\ell.
\]
We denote by $U_{\lambda,\ell}$ the $\mathbb{F}_q$-subspace of $\mathbb{F}_{q^m}$ generated by the entries of $\mathbf{u}_{\lambda,\ell}$, i.e.
\[
U_{\lambda,\ell} := \langle 1, \lambda, \ldots, \lambda^{\ell-1} \rangle_{\mathbb{F}_q}.
\]
Now, let $\mathbf{n} = (n_1, \ldots, n_k)$ be a sequence of positive integers satisfying $m\geq  n_1 \geq \ldots \geq n_k \geq 1$. We define the block-diagonal matrix
\begin{equation} \label{eq:generatormatrixlambdageneral}
G_{\lambda, \mathbf{n}} := \begin{bmatrix}
    \uu_{\lambda,n_{1}}   &   \mathbf{0}       & \mathbf{0}          & \cdots    & \mathbf{0} \\
\mathbf{0}  &   \uu_{\lambda,n_{2}}     & \mathbf{0}          & \cdots    & \mathbf{0} \\
\mathbf{0}  & \mathbf{0}  & \uu_{\lambda,n_{3}}   & \cdots    & \mathbf{0} \\
\vdots  & \vdots    & \vdots    & \ddots    & \vdots\\
\mathbf{0}  & \mathbf{0}   & \cdots    & \mathbf{0}  & \uu_{\lambda,n_{k}}
\end{bmatrix}
\in \mathbb{F}_{q^m}^{k \times (n_1 + \ldots + n_k)}.
\end{equation}
We denote by $\mathcal{C}_{\lambda, \mathbf{n}}$ the rank-metric code having $G_{\lambda, \mathbf{n}}$ as a generator matrix. This is an $[n_1 + \ldots + n_k, k, n_k]_{q^m/q}$ code.

\begin{remark}
The codes $\mathcal{C}_{\lambda, \mathbf{n}}$ have recently appeared in different contexts and exhibit interesting properties.
\begin{itemize}
\item The matrix $G_{\lambda, \mathbf{n}}$ also serves as the parity-check matrix of a subclass of bounded-degree LRPC codes. Therefore, $\mathcal{C}_{\lambda, \mathbf{n}}$ can be interpreted as the dual of these codes. This particular class has been introduced in \cite{franch2024bounded} and shown in \cite{gaborit2013low} to outperform general LRPC codes in terms of decoding capability.
    
\item These codes $\mathcal{C}_{\lambda, \mathbf{n}}$ fall within the class of \emph{completely decomposable codes}, cf. \cite{santonastaso2024completely}. In particular, for certain choices of parameters, the codes $\mathcal{C}_{\lambda, \mathbf{n}}$ attain the maximum possible number of minimum weight codewords among all completely decomposable codes. Also, the codes $\mathcal{C}_{\lambda, \mathbf{n}}$ are proven to have the largest number of codewords with maximum rank weight among a certain class of rank-metric codes \cite{polverino2023maximum}. Additional metric properties of $\mathcal{C}_{\lambda, \mathbf{n}}$ are discussed in \cite[Section~4.1]{santonastaso2024completely}.

\end{itemize}
\end{remark}

We now introduce an auxiliary function $\Psi$, which will help us to choose the vector $\mathbf{n}$ in \eqref{eq:generatormatrixlambdageneral} in a way which guarantees that the code $\C_{\lambda,\mathbf{n}}$ has a weight spectrum with a large size.

\begin{definition} \label{def:definitionPsi}
    Let \( u, v \) be positive integers with \( u \geq v \geq 2\). Define the function
    \[
        \Psi(u, v) := (u_1, u_2, \ldots, u_v),
    \]
    where the entries \( u_i \) are defined recursively as follows:
    \begin{align*}
        u_1 &:= \left\lceil \frac{u}{2} \right\rceil, \\
        u_i &:= \left\lceil \frac{\left\lfloor \frac{u}{2^{i-1}} \right\rfloor}{2} \right\rceil \quad \text{for } i = 2, \ldots, v - 1, \\
        u_v &:= \left\lfloor \frac{u}{2^{v-1}} \right\rfloor.
    \end{align*}
\end{definition}

We start to investigate the properties of the function $\Psi$ defined above.

\begin{proposition} \label{prop:propertiesofPSI}
    Let \( u, v \) be positive integers with \( u \geq v \), and let $\Psi(u, v) = (u_1, u_2, \ldots, u_v)$. Then the following properties hold:
    \begin{enumerate}[label=(\roman*)]
        \item $
            \sum\limits_{i=1}^v u_i = u;
        $
        \item $
            u_i \in \left\{ \sum\limits_{j=i+1}^{v} u_j,\ \sum\limits_{j=i+1}^{v} u_j + 1 \right\}, \quad \text{for all } i \in \{1, \ldots, v-1\}.
        $
    \end{enumerate}
\end{proposition}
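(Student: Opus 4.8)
The plan is to reparametrize the entries of $\Psi(u,v)$ through the auxiliary quantities $a_i := \lfloor u/2^{i-1}\rfloor$ for $i = 1, \ldots, v$, so that the nested floors and ceilings collapse into a clean telescoping structure. The two facts I would lean on are the standard iterated-floor identity $\lfloor \lfloor u/2^{i-1}\rfloor/2\rfloor = \lfloor u/2^i\rfloor$, which says $a_{i+1} = \lfloor a_i/2\rfloor$, together with the elementary identity $\lceil a/2\rceil = a - \lfloor a/2\rfloor$ valid for every integer $a$. The first reduces the doubly-nested floor in Definition~\ref{def:definitionPsi} to a single floor, and the second converts the ceiling into a difference, so that each entry of $\Psi(u,v)$ becomes a difference of consecutive $a_i$'s.

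Concretely, I would first observe that the piecewise recursion can be written uniformly as $u_i = \lceil a_i/2\rceil$ for $1 \le i \le v-1$, the $i=1$ case matching $u_1 = \lceil u/2\rceil$ since $a_1 = u$, while $u_v = a_v$. Applying the two identities above then yields
$$u_i = a_i - a_{i+1} \quad (1 \le i \le v-1), \qquad u_v = a_v.$$
Part (i) is then immediate: the sum $\sum_{i=1}^{v} u_i = \sum_{i=1}^{v-1}(a_i - a_{i+1}) + a_v$ telescopes to $a_1 = u$.

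For part (ii), I would first evaluate the tail sums. Using $u_j = a_j - a_{j+1}$ for $j \le v-1$ and $u_v = a_v$, the same telescoping gives $\sum_{j=i+1}^{v} u_j = a_{i+1}$ for every $i \in \{1,\ldots,v-1\}$. Since $u_i = a_i - a_{i+1}$, the desired membership $u_i \in \{\sum_{j=i+1}^{v} u_j,\ \sum_{j=i+1}^{v} u_j + 1\}$ is equivalent to $a_i \in \{2a_{i+1},\ 2a_{i+1}+1\}$, which is precisely the statement that $a_i - 2\lfloor a_i/2\rfloor = a_i \bmod 2 \in \{0,1\}$, a tautology from $a_{i+1} = \lfloor a_i/2\rfloor$. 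This closes (ii).

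The argument is essentially bookkeeping once the substitution $a_i = \lfloor u/2^{i-1}\rfloor$ is in place, so I do not expect a serious obstacle; the only point requiring mild care is the boundary behavior of the index ranges. In particular I would check the degenerate case $v = 2$ (where no ``middle'' index exists and only $u_1 = \lceil u/2\rceil$, $u_2 = \lfloor u/2\rfloor$ occur) and the endpoint $i = v-1$ of the tail-sum telescoping, to confirm that the uniform formulas $u_i = a_i - a_{i+1}$ and $\sum_{j=i+1}^{v} u_j = a_{i+1}$ genuinely hold across the whole range.
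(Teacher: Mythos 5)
Your proof is correct and follows essentially the same route as the paper's: both rest on the identities $\lceil h/2\rceil+\lfloor h/2\rfloor=h$ and $\lfloor\lfloor u/2^{i-1}\rfloor/2\rfloor=\lfloor u/2^{i}\rfloor$, and both establish (i) and the tail-sum evaluation in (ii) by the same telescoping. Your substitution $a_i=\lfloor u/2^{i-1}\rfloor$ with $u_i=a_i-a_{i+1}$ merely packages the paper's computation more compactly, and replaces the paper's inequality chain for the claim $u_i\in\{\lfloor u/2^{i}\rfloor,\lfloor u/2^{i}\rfloor+1\}$ by the equivalent parity observation $a_i=2a_{i+1}+(a_i \bmod 2)$.
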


\begin{proof}
Recall that for any positive integer \( h \), one has the identity
\[
h = \left\lceil \frac{h}{2} \right\rceil + \left\lfloor \frac{h}{2} \right\rfloor,
\]
and also that
\[
\left\lfloor \frac{\left\lfloor \frac{h}{i} \right\rfloor}{j} \right\rfloor = \left\lfloor \frac{h}{ij} \right\rfloor
\]
for any positive integers \( i, j \).\\
We now prove (i). By definition of the function \( \Psi(u, v) = (u_1, \ldots, u_v) \), we have:
\[
\begin{aligned}
\sum_{i=1}^v u_i 
&= \sum_{i=1}^{v-2} u_i + u_{v-1} + u_v \\
&= \sum_{i=1}^{v-2} u_i + \left\lceil \frac{\left\lfloor \frac{u}{2^{v-2}} \right\rfloor}{2} \right\rceil + \left\lfloor \frac{u}{2^{v-1}} \right\rfloor \\
&= \sum_{i=1}^{v-2} u_i + \left\lceil \frac{\left\lfloor \frac{u}{2^{v-2}} \right\rfloor}{2} \right\rceil + \left\lfloor \frac{\left\lfloor \frac{u}{2^{v-2}} \right\rfloor}{2} \right\rfloor \\
&= \sum_{i=1}^{v-2} u_i + \left\lfloor \frac{u}{2^{v-2}} \right\rfloor \\
&\quad \vdots \\
&= u_1 + \left\lfloor \frac{u}{2} \right\rfloor \\
&= \left\lceil \frac{u}{2} \right\rceil + \left\lfloor \frac{u}{2} \right\rfloor = u.
\end{aligned}\]
This proves (i). \\ We now prove (ii). Let \( i \in \{1, \ldots, v-1\} \). First, we claim that
\begin{equation} \label{eq:ceilfloor}
\left\lceil \frac{\left\lfloor \frac{u}{2^{i-1}} \right\rfloor}{2} \right\rceil \in \left\{ \left\lfloor \frac{u}{2^{i}} \right\rfloor,\ \left\lfloor \frac{u}{2^{i}} \right\rfloor + 1 \right\}.
\end{equation}
Indeed, we have:
\[\begin{aligned} 
\left\lfloor \frac{u}{2^{i}} \right\rfloor 
= \left\lfloor \frac{\left\lfloor \frac{u}{2^{i-1}} \right\rfloor}{2} \right\rfloor \nonumber\leq \left\lceil \frac{\left\lfloor \frac{u}{2^{i-1}} \right\rfloor}{2} \right\rceil \nonumber\leq \frac{\left\lfloor \frac{u}{2^{i-1}} \right\rfloor + 1}{2} \leq  \left\lfloor \frac{u}{2^{i}} \right\rfloor + 1. \nonumber
\end{aligned}\]
Since the value $\left\lceil \frac{\left\lfloor \frac{u}{2^{i-1}} \right\rfloor}{2} \right\rceil$  lies between \( \left\lfloor \tfrac{u}{2^{i}} \right\rfloor \) and \( \left\lfloor \tfrac{u}{2^{i}} \right\rfloor + 1 \), and it is an integer, it must be one of these two values. Moreover, arguing as in (i), we have that for $i=1,\dots, v-2$
\[
\begin{aligned}
\sum_{j=i+1}^v u_j =& \sum_{j=i+1}^{v-2}u_j +u_{v-1}+u_v\\
=&\sum_{j=i+1}^{v-2}u_j +\left\lfloor\frac{u}{2^{v-2}}\right\rfloor\\
\vdots\\
=&u_{i+1}+\left\lfloor\frac{u}{2^{i+1}}\right\rfloor\\
=&\left\lceil\frac{\lfloor\frac{u}{2^i}\rfloor}{2}\right\rceil+\left\lfloor\frac{\lfloor\frac{u}{2^i}\rfloor}{2}\right\rfloor\\
=&\left\lfloor \frac{u}{2^i} \right\rfloor.    
\end{aligned}
\]
Therefore, since 
\[
u_i = \left\lceil \frac{\left\lfloor \tfrac{u}{2^{i-1}} \right\rfloor}{2} \right\rceil \text{ for }i=1,\dots, v-1
\]
by~\eqref{eq:ceilfloor}, the claim follows.

\end{proof}

Our general strategy for proving the Main Theorem will be to distinguish between the case $n \leq m$, which we examine in Section \ref{sec:nleqm}, and the case $n>m$, which we examine in Section \ref{sec:n>m}. Such a distinction can be understood intuitively by noticing that a codeword $\cc \in \Fqmn$ in the case $n \leq m$ can have any weight in $\{1, \dots, n\}$, whereas in the case $n > m$ the available weights for a single codeword are $\{1, \dots, m\}$, because the dimension of the field extension limits the available weight spectrum. In both cases, with the aid of the function $\Psi$, we will present an explicit construction of codes attaining the equality in the Main Theorem. Moreover, we will prove that the number of non-zero weights reached by these constructions is the best possible. To do so, we will use combinatorial lemmas and, in the case $n>m$, we will involve also the geometric interpretation of $\Fqm$-linear rank metric codes.

\section{The case when \texorpdfstring{$n \leq m$}{n <= m}}
\label{sec:nleqm}

We start our investigation by studying the case $n \leq m$. In this section, we establish the following result, which corresponds to the case $n \leq m$ in the Main Theorem.

\begin{theorem}\label{thm:small_n}
Assume that $n \leq m$. Let $s := \max\left\{\floor*{\frac{n}{2^{k-1}}}, 1\right\}$. Then
$$L_{\rk}(n, m, k, q) = n - s+ 1.$$
\end{theorem}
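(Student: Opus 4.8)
The plan is to prove the two inequalities $L_{\rk}(n,m,k,q) \ge n-s+1$ and $L_{\rk}(n,m,k,q) \le n-s+1$ separately, the first by an explicit construction and the second by a combinatorial bound on the attainable rank-weight spectra.

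For the lower bound I would exhibit a code whose weight spectrum is exactly the full interval $\{s, s+1, \ldots, n\}$. When $n \ge 2^{k-1}$ (so $s = \lfloor n/2^{k-1}\rfloor \ge 1$) I take $\mathbf{n} = \Psi(n,k) = (u_1, \ldots, u_k)$ and the code $\C_{\lambda, \mathbf{n}}$ from \eqref{eq:generatormatrixlambdageneral}. Its minimum distance is $u_k = \lfloor n/2^{k-1}\rfloor = s$ and every codeword has weight $\le \min\{n,m\} = n$, so $\mathrm{WS}(\C_{\lambda,\mathbf{n}}) \subseteq \{s, \ldots, n\}$; the work is to show every value in this interval is attained. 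The mechanism is that for a message $(\lambda^{c_1}, \ldots, \lambda^{c_k})$ the corresponding codeword has entries $\lambda^{c_i + j}$, so its weight equals $|\bigcup_i \{c_i, \ldots, c_i + n_i - 1\}|$, the size of a union of integer intervals of lengths $n_1, \ldots, n_k$ inside $\{0, \ldots, n-1\} \subseteq \{0,\ldots,m-1\}$. By property (ii) of Proposition~\ref{prop:propertiesofPSI} the block sizes are ``almost super-increasing'' ($u_i \in \{\sum_{j>i}u_j, \sum_{j>i}u_j + 1\}$), which lets a short greedy placement realize any union size in $\{u_1, \ldots, n\}$. The remaining values $\{s, \ldots, u_1 - 1\}$ come from the codewords with $x_1 = 0$: these form the code $\C_{\lambda,(u_2,\ldots,u_k)}$, and since $(u_2,\ldots,u_k) = \Psi(\lfloor n/2\rfloor, k-1)$, induction on $k$ gives that they realize $\{s, \ldots, \lfloor n/2\rfloor\}$. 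Because $u_1 \in \{\lfloor n/2\rfloor, \lfloor n/2\rfloor + 1\}$ the two ranges meet, and the union is exactly $\{s, \ldots, n\}$. When $n < 2^{k-1}$ we have $s = 1$ and the target is the full spectrum $\{1, \ldots, n\}$; here $\Psi(n,k)$ is no longer admissible (its last entry vanishes), so I instead pick any decreasing block vector with $n_k = 1$, $\sum_i n_i = n$ and $n_i \le 1 + \sum_{j>i} n_j$ (such a vector exists precisely because $k \le n < 2^k$), and run the same argument.

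For the upper bound I would show that a nondegenerate $[n,k]_{q^m/q}$ code with $n \le m$ cannot have more than $n - s + 1$ distinct weights. When $s = 1$ this is the trivial bound $|\mathrm{WS}(\C)| \le n$, so assume $n \ge 2^{k-1}$ and argue by induction on $k$. Writing $d$ for the minimum distance, one always has $\mathrm{WS}(\C) \subseteq \{d, \ldots, n\}$, i.e. $|\mathrm{WS}(\C)| \le n - d + 1$. The crux is a complementary bound: a codeword $\cc$ of minimum weight $d$ spans a $d$-dimensional $\F_q$-space, and ``quotienting out'' this space (equivalently, intersecting the associated $q$-system with the hyperplane on which $\cc$ degenerates) produces a $(k-1)$-dimensional configuration of $\F_q$-dimension $n - d$, and every other weight of $\C$ differs from a weight of this smaller configuration by at most $d$. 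Feeding this into the induction and optimizing the resulting estimate against $n - d + 1$ over the free parameter $d$ forces the balance $d \approx n/2$ at each of the $k-1$ levels, which is exactly what produces the denominator $2^{k-1}$ and the value $n - s + 1$. The $k = 2$ case is already instructive: there the maximum-intersection hyperplane forces all remaining intersections to drop to dimension $\le d$, so the number of distinct weights is at most $\min\{n - d + 1,\, d + 2\} \le \lceil n/2\rceil + 1 = n - \lfloor n/2\rfloor + 1$.

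The main obstacle is the upper bound, and specifically making the inductive recursion tight. Getting the two-sided estimate to collapse onto the exact floor $\lfloor n/2^{k-1}\rfloor$, rather than an off-by-a-constant value, requires carefully tracking the floors and ceilings through the halving and treating the possible degeneracy of the $(k-1)$-dimensional sub-configuration (when the minimum-weight codeword's span fails to meet the smaller ambient space nondegenerately). The lower bound, by contrast, is essentially bookkeeping once Proposition~\ref{prop:propertiesofPSI} is in hand: the almost-super-increasing property is precisely the combinatorial input that guarantees the interval of attainable union sizes has no gaps.
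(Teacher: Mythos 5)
Your lower bound is essentially the paper's own argument (Construction~\ref{constr:casenleqm} and Theorem~\ref{thm:constrnleqm}): the block-diagonal code $\C_{\lambda,\mathbf{n}}$ with $\mathbf{n}=\Psi(n,k)$, the almost-super-increasing property from Proposition~\ref{prop:propertiesofPSI}(ii), and the shift-and-union computation of the attainable weights are all as in the paper; this half is sound.

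The upper bound is where there is a genuine gap. The paper does not induct on $k$ at all: Lemma~\ref{lem:combinatoriallemma} shows that any $S\subseteq\{1,\dots,n\}$ with $|S|\ge n-s+2$ must contain $k+1$ elements $s_0<\dots<s_k$ with $s_j>\sum_{i<j}s_i$ (one element is forced into each dyadic window $I_j=\{s_0+(2^{j-1}-1)s+1,\dots,2^{j-1}s\}$), and subadditivity of the rank weight then makes codewords of these $k+1$ weights $\F_{q^m}$-linearly independent, contradicting $\dim_{\F_{q^m}}\C=k$. Your proposed recursion does not close. The only quantitative consequence of ``every weight of $\C$ is within $d$ of a weight of the residual $(k-1)$-dimensional configuration $\C'$'' is $|\mathrm{WS}(\C)|\le 1+(d+1)\,|\mathrm{WS}(\C')|$ (or the even weaker interval-union estimate), and $d$ is not a free parameter you get to optimize over --- it is whatever minimum distance the adversarial code happens to have. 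When $d$ is small and $k\ge 3$ this bound exceeds $n$ while the complementary bound $n-d+1$ also exceeds $n-s+1$: for $n=8$, $k=3$, $d=1$ your two estimates give $\min\{8,\,1+2\cdot 5\}=8$, whereas the correct value is $7$. To exclude such codes you need structural information about \emph{which} $(k-1)$-dimensional spectra are attainable, not merely their cardinality --- precisely the no-super-increasing-subsequence property the paper's lemma isolates. Your $k=2$ computation works only because there the residual code has a single weight, so the fattened union is automatically an interval; that special feature does not propagate to higher $k$.
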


We will establish Theorem \ref{thm:small_n} by showing that the quantity $n - s + 1$ serves as both a lower and an upper bound for $L_{\rk}(n, m, k, q)$. As first, we provide an explicit construction of a code with exactly $n-s+1$ nonzero weights. This will also imply that the value $n-s+1$ is a lower bound for $L_{\rk}(n,m,k,q)$.\\

We start with a simple case, i.e. $k=2$, in order to better illustrate the ideas behind our construction.

\begin{example} \label{ex:k2}
Assume $k = 2$. We consider the code $\C_{\lambda,\mathbf{n}}$ generated by the matrix $G_{\lambda,\mathbf{n}}$ as defined in~\eqref{eq:generatormatrixlambdageneral}, namely:
$$G_{\lambda,\mathbf{n}} = \begin{bmatrix}
\uu_{\lambda, a} & \mathbf{0}  \\
\mathbf{0}  & \uu_{\lambda, b}
\end{bmatrix},$$
where $\mathbf{n} = (a, b)$ with $a \geq b$, $m \geq n = a + b$, and $\lambda$ is a generator of $\F_{q^m}$ over $\F_q$. Recall that $\uu_{\lambda, \ell} = (1, \lambda, \ldots, \lambda^{\ell - 1})$, as defined previously. The code $\C_{\lambda,\mathbf{n}}$ is therefore a $[a + b, 2, b]_{q^m/q}$ code generated by the codewords $\mathbf{c}_1 = (\uu_{\lambda, a}, \mathbf{0})$ and $\mathbf{c}_2 = (\mathbf{0}, \uu_{\lambda, b})$ in $\F_{q^m}^n$. Note that $\w(\mathbf{c}_1) = a$ and $\w(\mathbf{c}_2) = b$, so the weight spectrum of $\C_{\lambda,\mathbf{n}}$ includes the values $a$ and $b$.
Every codeword of $\C_{\lambda,\mathbf{n}}$ is of the form \[\alpha_1\mathbf{c}_1+\alpha_2\mathbf{c}_2=(\alpha_1 \uu_{\lambda,a}, \alpha_2 \uu_{\lambda,b} ),\]
for $\alpha_1, \alpha_2 \in \F_{q^m}$. Note that because of the block diagonal form of $G_{\lambda, \bf{n}}$, any codeword of the form $\alpha_1\mathbf{c}_1+\alpha_2\mathbf{c}_2$ with $\alpha_{1} \neq 0$ must have rank weight at least $a$. In particular, for $1 \leq i \leq b$, the codeword $\lambda^i\mathbf{c}_1+\mathbf{c}_2=(\lambda^i \uu_{\lambda, a}, \uu_{\lambda, b})$ has weight $a + i$. Indeed, we have:
\[
\begin{array}{rl}
   \w((\lambda^i,1)G_{\lambda,\bf{n}}) & = \w((\lambda^i \uu_{\lambda, a}, \uu_{\lambda, b})) \\
    & = \dim_{\F_q}\left(\lambda^i U_{\lambda,a}+U_{\lambda,b} \right) \\
     & = \dim_{\F_q}(\langle 1,\ldots,\lambda^{b-1},\lambda^i,\ldots,\lambda^{a+i-1} \rangle_{\F_q}) \\
     & =a+i.
\end{array}
\]
Therefore, the integers $a + 1, \ldots, a + b$ also belong to the weight spectrum. Therefore, the weight spectrum of $\C_{\lambda,\mathbf{n}}$ is exactly the set \[
\mathrm{WS}(\C)=\{b, a, a+1, \ldots, n\}.\]

Now assume that $a$ and $b$ can vary, while keeping $a+b = n$.
The size of the weight spectrum is maximal in the case $a = \lceil n/2 \rceil$ and $b = \lfloor n/2 \rfloor$, i.e. $(a,b)=\Psi(n,2)$.

$\hfill \lozenge$
\end{example}

We now consider a special instance of the construction described in Example~\ref{ex:k2}. This instance yields codes with exactly \( n - s + 1 \) distinct nonzero weights.

\begin{example}
Assume $n = 7$ and $k = 2$, and let $m \geq 7$. 
Consider the $[7, 2, 3]_{q^m/q}$ code $\C_{\lambda, \mathbf{n}}$ with $\mathbf{n} = (4,3)$, generated by the matrix
\[
G=G_{\lambda,\mathbf{n}} = \begin{bmatrix} 
\uu_{\lambda,4} & \mathbf{0} \\
\mathbf{0}  & \uu_{\lambda,3}
\end{bmatrix} = \begin{bmatrix} 
1 & \lambda & \lambda^2 & \lambda^3 & 0 & 0 & 0\\
0 & 0 & 0 & 0 & 1 & \lambda & \lambda^2
\end{bmatrix},
\]
where $\lambda \in \F_{q^m}$ is a generator of $\F_{q^m}$ over $\F_q$. This corresponds to the costruction as in Example~\ref{ex:k2} with $a = 4$ and $b = 3$, i.e. $(a,b)=\Psi(n,2)$. The code $\C_{\lambda,\mathbf{n}}$ has exactly $5$ distinct nonzero weights. Indeed,
\[
\begin{aligned}
&\w((0,1)G)=\w((0,0,0,0,1,\lambda,\lambda^2))=\dim_{\F_q}(\langle 1, \lambda,\lambda^2\rangle_{\F_q})=3\\
&\w((1,0)G)=\w((1,\lambda,\lambda^2,\lambda^3,0,0,0))=\dim_{\F_q}(\langle 1, \lambda,\lambda^2,\lambda^3\rangle_{\F_q})=4\\
&\w((\lambda,1)G)=\w((\lambda,\lambda^2,\lambda^3,\lambda^4,1,\lambda,\lambda^2))=\dim_{\F_q}(\langle 1, \lambda,\lambda^2,\lambda^3,\lambda^4\rangle_{\F_q})=5\\
&\w((\lambda^2,1)G)=\w((\lambda^2,\lambda^3,\lambda^4,\lambda^5,1,\lambda,\lambda^2))=\dim_{\F_q}(\langle 1,\lambda,\lambda^2,\lambda^3,\lambda^4,\lambda^5\rangle_{\F_q})=6\\
&\w((\lambda^3,1)G)=\w((\lambda^3,\lambda^4,\lambda^5,\lambda^6,1,\lambda,\lambda^2))=\dim_{\F_q}(\langle 1, \lambda,\lambda^2,\lambda^3,\lambda^4,\lambda^5,\lambda^6\rangle_{\F_q})=7.
\end{aligned}
\]

Thus, the weight spectrum of $\C_{\lambda,\mathbf{n}}$ is precisely $\{3, 4, 5, 6, 7\}$, which has size $5 = n - s + 1$, where the value of $s$ is as in Theorem \ref{thm:small_n}, i.e. 
\[
s = \left\lfloor \frac{n}{2^{k-1}} \right\rfloor = \left\lfloor \frac{7}{2} \right\rfloor = 3.
\]

$\hfill \lozenge$
\end{example}

We now describe our general construction. This is based on considering a code with a generator matrix as in \eqref{eq:generatormatrixlambdageneral}, where the vector $\mathbf{n}$ is determined by using $\Psi(u,v)$ defined in Definition \ref{def:definitionPsi}. This will guarantee that the code described in this way has $n-s+1$ non-zero weights.

\begin{construction}
\label{constr:casenleqm}
Assume \( k < n \leq m \). Define the set
\[
S := \left\{ i \in \Z_{\geq 0} \colon \frac{n - i}{2^{k - i - 1}} \geq 1 \right\}.
\]
Note that \( k - 1 \in S \), and let \( z := \min S \leq k-1\). \\
\medskip
\noindent
\textbf{\underline{Case \( z = 0 \).}} Consider the tuple \( \mathbf{n} := \Psi(n, k) = (n_1, \ldots, n_k) \), where the function $\Psi$ is as in Definition \ref{def:definitionPsi}, and define the $[n,k]_{q^m/q}$ code \( \mathcal{C}_{\lambda, \mathbf{n}} \) with generator matrix \( G_{\lambda, \mathbf{n}} \) as in~\eqref{eq:generatormatrixlambdageneral}, where \( \lambda \in \F_{q^m} \) is a generator of \( \F_{q^m} \) over \( \F_q \), and each block \( \uu_{\lambda, \ell} \) is defined as \( (1, \lambda, \lambda^2, \ldots, \lambda^{\ell - 1}) \).

\medskip
\noindent
\textbf{\underline{Case \( z > 0 \).}} Let \( n' := n - z \) and \( k' := k - z \). Consider the tuple \( \Psi(n', k') = (n_1, \ldots, n_{k'}) \), and define the extended tuple
\[
\mathbf{n} := (n_1, \ldots, n_{k'}, \underbrace{1, \ldots, 1}_{z\ \text{times}}).
\]
Note that $n_{k'} = \lfloor n'/2^{k'-1} \rfloor=\lfloor n-z/2^{k-z-1} \rfloor \geq 1$. Then define the $[n,k]_{q^m/q}$ code \( \mathcal{C}_{\lambda, \mathbf{n}} \) with generator matrix \( G_{\lambda, \mathbf{n}} \) as in~\eqref{eq:generatormatrixlambdageneral}.
\end{construction}

We now prove that the code $\C_{\lambda, \bf{n}}$ defined above has exactly $n-s+1$ distinct weights.

\begin{theorem}
\label{thm:constrnleqm}
Let \( n \leq m \), and let \( s = \max\left\{ \left\lfloor \frac{n}{2^{k-1}} \right\rfloor, 1 \right\} \). The $[n,k]_{q^m/q}$ code $\mathcal{C}_{\lambda,\mathbf{n}}$ defined as in Construction \ref{constr:casenleqm} has weight spectrum
\[
\mathrm{WS}(\C_{\lambda,\mathbf{n}})=\{s,s+1,\ldots,n\}
\]
and therefore has precisely \( n - s + 1 \) distinct nonzero weights.
\end{theorem}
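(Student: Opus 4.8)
The plan is to prove the two containments $\mathrm{WS}(\C_{\lambda,\mathbf{n}}) \subseteq \{s,s+1,\ldots,n\}$ and $\mathrm{WS}(\C_{\lambda,\mathbf{n}}) \supseteq \{s,s+1,\ldots,n\}$ separately. For the first inclusion I would begin by recording that $s$ coincides with the size $n_k$ of the smallest block in both branches of Construction~\ref{constr:casenleqm}: when $z=0$ we have $n_k=\lfloor n/2^{k-1}\rfloor\ge 1$ because $0\in S$, so $s=n_k$; when $z>0$ we have $0\notin S$, hence $\lfloor n/2^{k-1}\rfloor=0$ and thus $s=1=n_k$. Since $\C_{\lambda,\mathbf{n}}$ is an $[n,k,n_k]_{q^m/q}$ code, every nonzero codeword has weight at least $n_k=s$, and trivially at most $\min\{m,n\}=n$; this already gives $\mathrm{WS}(\C_{\lambda,\mathbf{n}})\subseteq\{s,\ldots,n\}$.

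The harder inclusion is the achievability of every intermediate weight, and the plan is to reduce it to a purely combinatorial covering statement. For a codeword of $\C_{\lambda,\mathbf{n}}$ whose $i$-th block multiplier is either $0$ or a monomial $\lambda^{e_i}$, the nonzero entries are the powers $\lambda^{e_i+t}$ with $0\le t\le n_i-1$; as long as all exponents stay below $m$, the $\F_q$-independence of $1,\lambda,\ldots,\lambda^{m-1}$ (guaranteed here since $n\le m$) shows that the rank weight equals the number of distinct exponents, i.e. the cardinality of the union of the integer intervals $[e_i,e_i+n_i-1]$. It therefore suffices to prove: given block lengths $n_1\ge\cdots\ge n_k\ge1$ satisfying $n_i\le 1+\sum_{j>i}n_j$ for all $i<k$, every integer $w$ with $n_k\le w\le n$ can be realized as the size of a union of suitably placed intervals of these lengths whose union is exactly $\{0,1,\ldots,w-1\}$. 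I would check this hypothesis uniformly in both branches: for the blocks coming from $\Psi$ it is precisely Proposition~\ref{prop:propertiesofPSI}(ii), and the appended unit blocks in the case $z>0$ satisfy it trivially; note also that there $n_{k'}=1$ (because $z-1\notin S$), so the all-ones tail glues seamlessly onto the $\Psi$-part even though property (ii) fails as an exact chain for a long run of ones.

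I would then prove the covering statement by induction on the blocks from the smallest upward, showing that using only blocks $i,i+1,\ldots,k$ one can realize every weight in $\{n_k,\ldots,T_i\}$, where $T_i:=\sum_{j\ge i}n_j$, and moreover realize it as the full interval $[0,w-1]$. The base case $i=k$ places block $k$ at position $0$. For the inductive step, weights $w\le T_{i+1}$ are obtained by leaving block $i$ off and invoking the hypothesis, while for $T_{i+1}<w\le T_i$ I would keep the sub-blocks arranged to cover the full interval $[0,T_{i+1}-1]$ and slide block $i$ to position $p:=w-n_i$. The point is that $n_i\le T_{i+1}+1$ forces $0\le p\le T_{i+1}$, so block $i$'s interval $[p,p+n_i-1]=[p,w-1]$ abuts or overlaps $[0,T_{i+1}-1]$ with no gap, making the union exactly $[0,w-1]$. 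This yields contiguous realizations of every weight in $\{n_k,\ldots,T_i\}$, and taking $i=1$ produces all of $\{s,\ldots,n\}$.

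The main obstacle is this inductive step, and specifically the avoidance of gaps in the union: the naive attempt to abut block $i$ against a sub-cover of length exactly $p$ fails when $p<n_k$, since no sub-collection can cover a short interval. The device that resolves this — always covering the maximal sub-interval $[0,T_{i+1}-1]$ and sliding the new block across it — is exactly what makes the inequality $n_i\le T_{i+1}+1$ the precise condition required, and it is the only place where the structural property of $\Psi$ from Proposition~\ref{prop:propertiesofPSI}(ii) enters.
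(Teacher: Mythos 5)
Your proposal is correct and follows essentially the same route as the paper: both realize each weight $w$ by covering the initial segment $\{0,\dots,w-1\}$ of exponents of $\lambda$, sliding the $i$-th block across the contiguous cover built from blocks $i+1,\dots,k$, with Proposition~\ref{prop:propertiesofPSI}(ii) supplying exactly the no-gap condition $n_i\le 1+\sum_{j>i}n_j$. The only difference is presentational: you package the mechanism as a single abstract interval-covering lemma that treats the $z=0$ and $z>0$ branches uniformly, whereas the paper writes out the explicit codewords and handles $z>0$ separately via the punctured code.
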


\begin{proof}
We divide the discussion in two parts according $z$, defined as in Construction \ref{constr:casenleqm}, is 0 or greater than $0$.

\noindent\underline{Case $z=0$}. We prove that the code $\mathcal{C}_{\lambda,\mathbf{n}}$ has weight spectrum \( \{n_k, \ldots, n\} \). Since \( n_k = s = \left\lfloor \frac{n}{2^{k-1}} \right\rfloor \), this will imply the assertion.

\medskip

For every \( i = 1, \dots, k \), let \( \mathbf{c}_{n_i} = (\mathbf{0}, \ldots, \uu_{\lambda,n_i}, \ldots, \mathbf{0}) \in \mathcal{C}_{\lambda,\mathbf{n}} \), where $\mathbf{c}_{n_i}$ is written as a vector defined by blocks of length $n_1,n_2,\dots,n_k$ and \( \uu_{\lambda,n_i} \) appears in the \( i \)-th block. Note that \( n_k \leq n_{k-1} \leq \dots \leq n_1 \), and the codeword \( \mathbf{c}_{n_k} = (\mathbf{0}, \ldots, \mathbf{0}, \uu_{\lambda,n_k}) \in \mathcal{C}_{\lambda,\mathbf{n}} \) has rank weight
\[
\w(\mathbf{c}_{n_k}) = \dim_{\F_q}(\langle \uu_{\lambda,n_k} \rangle_{\F_q}) = \dim_{\F_q}(\langle 1, \lambda, \ldots, \lambda^{n_k - 1} \rangle_{\F_q}) = n_k,
\]
which is the minimum possible.

\medskip

Now, we show how to generate codewords covering all weights from \( n_k \) up to \( n \). For every \( j = 0, \ldots, n_k \), consider:
\[
\mathbf{c}_{n_k} + \lambda^j \mathbf{c}_{n_{k-1}} = (\mathbf{0}, \ldots, \mathbf{0}, \lambda^j \uu_{\lambda,n_{k-1}}, \uu_{\lambda,n_k}) \in \mathcal{C}_{\lambda,\mathbf{n}}.
\]
Then, since $n_k\leq n_{k-1}$, we have that 
\[
\w(\mathbf{c}_{n_k} + \lambda^j \mathbf{c}_{n_{k-1}}) = \dim_{\F_q}(\langle 1, \lambda, \ldots,\lambda^{n_k-1},\ldots, \lambda^{n_{k-1} + j - 1} \rangle_{\F_q}) = n_{k-1} + j.
\]
By Proposition~\ref{prop:propertiesofPSI}(ii), we know that either
\[
n_{k-1} = n_k \quad \text{or} \quad n_{k-1} = n_k + 1.
\]
So, by letting \( j \) vary from \( 0 \) to \( n_k \), we obtain codewords of weights \(  n_k + 1, \ldots, n_{k-1} + n_k \). Again by Proposition~\ref{prop:propertiesofPSI}(ii), we have
\[
\text{ either } n_{k-1} + n_k=n_{k-2}  \quad \text{or} \quad n_{k-1} + n_k= n_{k-2}-1.
\]
Therefore, we cover all weights up to either $n_{k-2}-1$ or $n_{k-2}$.

\noindent Now, for each \( j = 0, \ldots, n_{k-1} + n_k \), consider:
\[
\mathbf{c}_{n_k} + \lambda^{n_k} \mathbf{c}_{n_{k-1}} + \lambda^j \mathbf{c}_{n_{k-2}} = (\mathbf{0}, \ldots, \lambda^j \uu_{\lambda,n_{k-2}}, \lambda^{n_k} \uu_{\lambda,n_{k-1}}, \uu_{\lambda,n_k}) \in \mathcal{C}_{\lambda,\mathbf{n}}.
\]
Then,
\[
\begin{aligned}
\w(\mathbf{c}_{n_k} + \lambda^{n_k} \mathbf{c}_{n_{k-1}} + \lambda^j \mathbf{c}_{n_{k-2}}) &= \dim_{\F_q}(\langle 1, \lambda, \ldots,\lambda^{n_k-1},\lambda^{n_k}, \dots, \lambda^{n_k+n_{k-1}-1},\dots, \lambda^j, \dots, \lambda^{n_{k-2} + j - 1} \rangle_{\F_q})\\ &= n_{k-2} + j.
\end{aligned}
\]
So, we find codewords of weights \( n_{k-2}, \ldots, n_{k-2} + n_{k-1} + n_k \). Again by Proposition~\ref{prop:propertiesofPSI}(ii), we know that this covers weights up to \( n_{k-3} \) or \( n_{k-3} - 1 \) since \[
\text{ either } n_{k-2} + n_{k-1} + n_k=n_{k-3}-1 \text{ or } n_{k-2} + n_{k-1} + n_k=n_{k-3}.\]

Proceeding recursively in this way, after a finite number of steps, we obtain codewords of every weight from \( n_k \) to \( n_2+n_3+\dots+n_k \), where:
\[
\text{ either } n_1+n_2+\dots+n_k=n_1-1  
\text{ or } n_1+n_2+\dots+n_k=n_1.
\]

Finally, for every \( j = 0, \ldots, n_2 + \cdots + n_k \), consider:
\[
\mathbf{c}_{n_k} + \lambda^{n_k} \mathbf{c}_{n_{k-1}} + \lambda^{n_{k-1}} \mathbf{c}_{n_{k-2}} + \cdots + \lambda^{n_3} \mathbf{c}_{n_2} + \lambda^j \mathbf{c}_{n_1} \in \mathcal{C}_{\lambda, \mathbf{n}}.
\]
Then the weight is:
\[
\begin{aligned}
&\w(\mathbf{c}_{n_k} + \lambda^{n_k} \mathbf{c}_{n_{k-1}} + \lambda^{n_{k-1}} \mathbf{c}_{n_{k-2}} + \cdots + \lambda^{n_3} \mathbf{c}_{n_2} + \lambda^j \mathbf{c}_{n_1})=\\
&=\dim_{\F_q}(\langle 1, \lambda, \ldots,\lambda^{n_k-1},\lambda^{n_k},\dots, \lambda^{n_{k-1}-1}, \lambda^{n_{k-1}}, \dots,\lambda^j,\dots, \lambda^{n_1 + j - 1} \rangle_{\F_q}) \\
&=n_1 + j.
\end{aligned}
\]

Thus, letting \( j \) vary, we obtain codewords of every weight from \( n_1 \) up to \( n = n_1 + \cdots + n_k \), by (i) of Proposition~\ref{prop:propertiesofPSI}.

Hence, the weight spectrum of \( \mathcal{C}_{\lambda,\mathbf{n}} \) is exactly \( \{n_k, n_k + 1, \ldots, n\} \), and its size is \( n - n_k + 1 \), as claimed.

\medskip
\noindent \underline{Case $z>0$}. First, we note that
\[
n_{k'} = \left\lfloor \frac{n'}{2^{k' - 1}} \right\rfloor = \left\lfloor \frac{n - z}{2^{k - z - 1}} \right\rfloor = 1.
\]
Indeed, if
\[
\left\lfloor \frac{n - z}{2^{k - z - 1}} \right\rfloor \geq 2,
\]
then
\[
\frac{n - z}{2^{k - z - 1}} \geq 2,
\]
which implies
\[
\frac{n - z + 1}{2^{k - z}} \geq 1,
\]
and so
\[
\left\lfloor \frac{n - z + 1}{2^{k - z}} \right\rfloor \geq 1.
\]
This implies that \( z' := z - 1 \in S \), where \( S \) is as defined in Construction~\ref{constr:casenleqm}, contradicting the minimality of \( z \).

Now consider the punctured code of \( \mathcal{C} \), defined as
\[
\mathcal{C}' = \left\{ (\alpha_{n_1} \uu_{n_1}, \ldots, \alpha_{n_{k'}} \uu_{n_{k'}}) \mid \alpha_{n_i} \in \F_{q^m}, i=1,\dots,k' \right\}.
\]
The code \( \mathcal{C}' \) is the \( [n - z, k - z]_{q^m/q} \) code, constructed as in Construction~\ref{constr:casenleqm}, and for which
\[
z' = \min \left\{ i \in \Z_{\geq 0} \colon \frac{n - z - i}{2^{k - z - i - 1}} \geq 1 \right\} = 0.
\]
Applying the previous case to \( \mathcal{C}' \), we obtain
\[
\mathrm{WS}(\mathcal{C}') = \{n_{k'}, \ldots, n'\} = \{1, \ldots, n - z\}.
\]
Hence, \( |\mathrm{WS}(\mathcal{C}')| = n' - n_{k'} + 1 = n' = n - z \). Furthermore, every codeword \( \overline{\mathbf{c}} \in \mathcal{C}' \) corresponds to the codeword \( (\overline{\mathbf{c}}, \mathbf{0}) \in \mathcal{C} \), so we conclude that
\[
\{1, \ldots, n - z\} = \mathrm{WS}(\mathcal{C}') \subseteq \mathrm{WS}(\mathcal{C}).
\]

\medskip

It remains to show that there are codewords in \( \mathcal{C} \) of weights \( n - z + 1, \ldots, n \). By construction, there exists \( \mathbf{c}' \in \mathcal{C}' \) such that
\[
\langle \mathbf{c}' \rangle_{\F_q} = \langle 1, \lambda, \ldots, \lambda^{n - z - 1} \rangle_{\F_q}, \quad \text{and} \quad \w(\mathbf{c}') = n - z.
\]
Now, consider the family of codewords of the form
\[
(\mathbf{c}', \lambda^{i_1}, \ldots, \lambda^{i_z}) \in \mathcal{C},
\]
for arbitrary non-negative integers \( i_1, \ldots, i_z \).

By choosing tuples of the form:
\begin{align*}
(i_1, \ldots, i_z) &= (n - z, 0, \ldots, 0), \\
(i_1, \ldots, i_z) &= (n - z, n - z + 1, 0, \ldots, 0), \\
&\vdots \\
(i_1, \ldots, i_z) &= (n - z, n - z + 1, \ldots, n - 1, 0), \\
(i_1, \ldots, i_z) &= (n - z, n - z + 1, \ldots, n - 1, n),
\end{align*}
we generate codewords with weights ranging from \( n - z +1\) up to \( n \). Therefore, the weight spectrum of \( \mathcal{C} \) is exactly \( \{1, \ldots, n\} \), and the assertion is proved.

\end{proof}

In the next, we show examples of codes defined as in Construction \ref{constr:casenleqm} and we show that their weight spectrums have size $n-s+1$.

\begin{example}
Let \( n = 8 \), \( m = 8 \), and \( k = 3 \). Since \( n = 8 \geq 2^k = 8 \), we have that $s=\lfloor\frac{n}{2^{k-1}}\rfloor=2>1$ and, by \Cref{thm:constrnleqm}, Construction~\ref{constr:casenleqm} provides a code with $n-s+1=8-2+1=7$ non-zero weights. In this case, we have \( z = 0 \), and Construction~\ref{constr:casenleqm} gives the code \( \C_{\lambda,\mathbf{n}} \) with \( \mathbf{n} = (n_1, n_2, n_3) = (4, 2, 2) \), and generator matrix
\[
G_{\lambda, \mathbf{n}} = 
\begin{bmatrix}
\uu_{\lambda, 4} & {\bf 0} & {\bf 0} \\
{\bf 0} & \uu_{\lambda, 2} & {\bf 0} \\
{\bf 0} & {\bf 0} & \uu_{\lambda, 2}
\end{bmatrix}.
\]
This code has codewords of every weight from 2 to 8. Indeed, following the proof of Theorem \ref{thm:constrnleqm}, we have that 
\begin{itemize}
\item \( \mathbf{c}_{n_3}=(0,0,0,0,0,0, \uu_{\lambda, 2})=(0,0,0,0,0,0, 1,\lambda) \) has weight 2.
    \item $\mathbf{c}_{n_3} + \lambda \mathbf{c}_{n_2} = (0,0,0,0, \lambda\uu_{\lambda,2}, \uu_{\lambda,2})=(0,0,0,0,\lambda,\lambda^2,1,\lambda)$ has weight 3.
    \item $\mathbf{c}_{n_3} + \lambda^2\mathbf{c}_{n_2} = (0,0,0,0, \lambda^2\uu_{\lambda,2}, \uu_{\lambda,2})=(0,0,0,0,\lambda^2,\lambda^3,1,\lambda)$ has weight 4.
    \item $\mathbf{c}_{n_3} + \lambda^2 \mathbf{c}_{n_2}+\lambda\mathbf{c}_{n_1}=(\lambda\uu_{\lambda,4}, \lambda^2\uu_{\lambda,2}, \uu_{\lambda, 2})=(\lambda,\lambda^2, \lambda^3,\lambda^4,\lambda^2,\lambda^3,1,\lambda)$ has weight 5.
    \item $\mathbf{c}_{n_3} + \lambda^2 \mathbf{c}_{n_2}+\lambda^2\mathbf{c}_{n_1}=(\lambda^2\uu_{\lambda,4}, \lambda^2\uu_{\lambda,2}, \uu_{\lambda, 2})=(\lambda^2,\lambda^3,\lambda^4,\lambda^5,\lambda^2,\lambda^3,1,\lambda)$ has weight 6.
    \item $\mathbf{c}_{n_3} + \lambda^2 \mathbf{c}_{n_2}+\lambda^3\mathbf{c}_{n_1}=(\lambda^3\uu_{\lambda,4}, \lambda^2\uu_{\lambda,2}, \uu_{\lambda, 2})=(\lambda^3,\lambda^4,\lambda^5,\lambda^6,\lambda^2,\lambda^3,1,\lambda)$ has weight 7.
    \item $\mathbf{c}_{n_3} + \lambda^2 \mathbf{c}_{n_2}+\lambda^4\mathbf{c}_{n_1}=(\lambda^4\uu_{\lambda,4}, \lambda^2\uu_{\lambda,2}, \uu_{\lambda, 2})=(\lambda^4,\lambda^5,\lambda^6,\lambda^7,\lambda^2,\lambda^3,1,\lambda)$ has weight 8.
\end{itemize}
\medskip

\noindent We now consider the case \( n = 6 \), \( m = 6 \), and \( k = 4 \). Since \( n = 6 \leq 2^{k-1} = 8 \), we have that $z=1>0$ and $s=\max\lbrace \lfloor\frac{n}{2^{k-1}}\rfloor,1\rbrace=1$ and
Construction~\ref{constr:casenleqm} provides a code with $n-s+1=6$ non-zero weights. Precisely, Construction~\ref{constr:casenleqm} gives the code \( \C_{\lambda,\mathbf{n}} \) with \( \mathbf{n} = (n_1, n_2, n_3,n_4) = (3, 1, 1, 1) \), and generator matrix
\[
G_{\lambda, \mathbf{n}} = 
\begin{bmatrix}
\uu_{\lambda, 3} & {\bf 0} & {\bf 0} & {\bf 0} \\
{\bf 0} & \uu_{\lambda, 1} & {\bf 0} & {\bf 0} \\
{\bf 0} & {\bf 0} & \uu_{\lambda, 1} & {\bf 0} \\
{\bf 0} & {\bf 0} & {\bf 0} & \uu_{\lambda, 1}
\end{bmatrix}.
\]

As in the previous example, one can check that \( \C_{\lambda,\mathbf{n}} \) contains codewords of every rank weight from \( 1 \) to \( 6 \). Indeed, following the proof of Theorem \ref{constr:casenleqm}, consider the punctured code of $\mathcal{C}$, defined as \[
\mathcal{C}' = \left\{ (\alpha_{n_1} \uu_{n_1}, \alpha_{n_2} \uu_{n_2}, \alpha_{n_{3}} \uu_{n_{3}}) \mid \alpha_i \in \F_{q^m}, i=1,2,3 \right\}
\] Then
\begin{itemize}
    \item ${\bf c}'_1={\bf c}'_{n_3}=(0,0,0,0, \uu_{\lambda, 1})=(0,0,0,0,1)\in\mathcal{C}'$ has weight 1.
    \item ${\bf c}'_2={\bf c}'_{n_3}+\lambda{\bf c}'_{n_2}=(0,0,0,\lambda\uu_{\lambda, 1}, \uu_{\lambda, 1})=(0,0,0,\lambda,1)\in\mathcal{C}'$ has weight 2.
    \item ${\bf c}'_3={\bf c}'_{n_3}+\lambda {\bf c}'_{n_2}+ {\bf c}'_{n_1}=(\uu_{\lambda,3}, \lambda^2\uu_{\lambda, 1}, \uu_{\lambda, 1})=(1, \lambda,\lambda^2, \lambda, 1)\in\mathcal{C}'$ has weight 3.
    \item ${\bf c}'_4={\bf c}'_{n_3}+\lambda {\bf c}'_{n_2}+ \lambda{\bf c}'_{n_1}=(\lambda\uu_{\lambda,3}, \lambda\uu_{\lambda, 1}, \uu_{\lambda, 1})=( \lambda,\lambda^2,\lambda^3, \lambda, 1)\in\mathcal{C}'$ has weight 4.
    \item ${\bf c}'_5={\bf c}'_{n_3}+\lambda {\bf c}'_{n_2}+\lambda^2 {\bf c}'_{n_1}=(\lambda^2\uu_{\lambda,3}, \lambda^2\uu_{\lambda, 1}, \uu_{\lambda, 1})=(\lambda^2, \lambda^3,\lambda^4, \lambda, 1)\in\mathcal{C}'$ has weight 5.
\end{itemize}
Then the codewords ${\bf c}_j=({\bf c}'_j, 0)\in\mathcal{C}$ for every $j=1,\dots, 5$ and so $\mathcal{C}$ has codewords of weight from 1 up to 5. Finally, consider ${\bf c}_6=({\bf{c}}_5,\lambda^5)=(\lambda^2,\lambda^3,\lambda^4,\lambda,1,\lambda^5)\in\mathcal{C}$ has weight 6. Thus $\mathcal{C}$ in an $[6,4]_{q^6/q}$ code with $6$ non-zero weights.

$\hfill \lozenge$
\end{example}

As immediate consequence of Theorem \ref{thm:constrnleqm}, we get the following corollary. 

\begin{corollary}
\label{cor:lowerboundnleqm}
Assume that $n \leq m$. Let $s = \max\left\{\floor*{\frac{n}{2^{k-1}}}, 1\right\}$. Then
\[ L_{\rk}(n, m, k, q) \geq n - s+ 1.\]    
\end{corollary}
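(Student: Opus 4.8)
The plan is to derive Corollary~\ref{cor:lowerboundnleqm} as a direct consequence of Theorem~\ref{thm:constrnleqm}, since the corollary is precisely the statement that $n-s+1$ is a lower bound for the combinatorial quantity $L_{\rk}(n,m,k,q)$. The key conceptual point is simply the \emph{definition} of $L_{\rk}(n,m,k,q)$ as the \emph{maximum} number of distinct nonzero weights attainable by a $k$-dimensional $\F_{q^m}$-linear code in $\F_{q^m}^n$. A lower bound on a maximum is established by exhibiting a single code whose weight spectrum has the required size.

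First I would recall the hypothesis $n \le m$ and the definition $s = \max\left\{\floor*{\frac{n}{2^{k-1}}}, 1\right\}$. Under these exact hypotheses, Construction~\ref{constr:casenleqm} produces an explicit $[n,k]_{q^m/q}$ code $\mathcal{C}_{\lambda,\mathbf{n}}$, and Theorem~\ref{thm:constrnleqm} establishes that its weight spectrum is exactly $\{s, s+1, \ldots, n\}$, hence
\[
|\mathrm{WS}(\mathcal{C}_{\lambda,\mathbf{n}})| = n - s + 1.
\]
The only remaining step is to invoke the definition of $L_{\rk}(n,m,k,q)$: since this code is a particular $k$-dimensional $\F_{q^m}$-linear code in $\F_{q^m}^n$ with $n-s+1$ distinct nonzero weights, the maximum over all such codes is at least $n-s+1$, which is exactly the claimed inequality.

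There is essentially no obstacle here beyond verifying that the hypotheses of Theorem~\ref{thm:constrnleqm} match those of the corollary, which they do verbatim. The one subtlety worth a remark is that Theorem~\ref{thm:constrnleqm} and Construction~\ref{constr:casenleqm} are stated under the standing assumption $k < n \le m$ (the degenerate cases having been dispatched earlier, where it was noted that $L_{\rk}(m,k,k,q) = \min\{k,m\}$ and $L_{\rk}(m,n,1,q)=1$); so strictly the corollary should be read in that same regime $1 < k < n \le m$, consistent with the global assumption $n > k > 1$ fixed earlier in the excerpt. Given all this, the proof is a two-line appeal to the construction and the definition of the maximum, with no calculation required.
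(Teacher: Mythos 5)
Your proposal is correct and matches the paper exactly: the paper states this corollary as an immediate consequence of Theorem~\ref{thm:constrnleqm}, which is precisely your argument of exhibiting the code $\mathcal{C}_{\lambda,\mathbf{n}}$ from Construction~\ref{constr:casenleqm} with weight spectrum $\{s,\ldots,n\}$ and appealing to the definition of $L_{\rk}(n,m,k,q)$ as a maximum. Your remark about the standing assumption $1 < k < n \le m$ is also consistent with the paper's setup.
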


Now, to conclude the proof of Theorem \ref{thm:small_n}, we will show that $n-s+1$ is also an upper bound for $L_{\rk}(n,m,k,q)$. \\

Note that when $s=1$, which means $n<2^k$, we already know that $L_{\rk}(n,m,k,q) = n$. To examine the case in which $s>1$, which corresponds to $n\geq 2^{k}$, we will use the following combinatorial lemma.

\begin{lemma}
\label{lem:combinatoriallemma}
Assume $n \geq 2^k$ and let $s = \floor*{\frac{n}{2^{k-1}}}>1$. Let $S$ be a subset of $\{1, \dots, n\}$ such that $|S| \geq n-s+2$. Then there exist $k+1$ elements $s_{0}, \dots, s_{k} \in S$ such that
$$\quad s_{j} > \sum_{i=0}^{j-1} s_{i}, \ \ \ \mbox{ for every } 1 \leq j \leq k.$$
\end{lemma}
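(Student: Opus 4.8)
The plan is to build a single superincreasing chain greedily from the bottom and to show that it must already have length $k+1$; otherwise $S$ would be forced to be too small. Concretely, I would set $t_0 := \min S$ and, having chosen $t_0, \dots, t_{j-1}$ with partial sums $P_{j-1} := \sum_{i=0}^{j-1} t_i$ (and $P_{-1} := 0$), let $t_j$ be the smallest element of $S$ strictly greater than $P_{j-1}$, whenever such an element exists. By construction every initial segment satisfies $t_j > P_{j-1} = \sum_{i<j} t_i$, so it is automatically a superincreasing chain. Hence if the greedy process produces at least $k+1$ terms $t_0, \dots, t_k$, these serve as the desired $s_0, \dots, s_k$ and there is nothing more to prove. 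Note that $S \neq \emptyset$ since $|S| \geq n-s+2 \geq 2$, so the process starts.

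It remains to rule out the possibility that the process halts after producing only $t_0, \dots, t_r$ with $r \leq k-1$, i.e. that $S$ contains no element exceeding $P_r$. The first task is to bound $|S|$ from above. Since in this case every element of $S$ lies in $\{1, \dots, P_r\}$, I would partition this range into the consecutive blocks $(P_{j-1}, P_j]$ for $j = 0, \dots, r$, each of size $P_j - P_{j-1} = t_j$. By the minimality built into the greedy choice, the sub-block $\{P_{j-1}+1, \dots, t_j - 1\}$ contains no element of $S$, so that $|S \cap (P_{j-1}, P_j]| \leq P_j - t_j + 1 = P_{j-1} + 1$. Summing over $j$ and using $P_{-1}=0$ yields the key estimate $|S| \leq (r+1) + \sum_{j=0}^{r-1} P_j$.

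The final step converts this into a numerical contradiction. The superincreasing condition $t_j > P_{j-1}$ gives $P_j \geq 2P_{j-1} + 1$, equivalently $P_j + 1 \geq 2(P_{j-1}+1)$, so iterating downward from $P_r \leq n$ yields $P_j + 1 \leq (n+1)/2^{\,r-j}$. Substituting this into the estimate above and summing the resulting geometric series gives $|S| \leq (r+1) + (n+1)(1 - 2^{-r}) - r = n + 2 - (n+1)/2^{\,r}$. Since $r \leq k-1$ and $(n+1)/2^{\,k-1} > n/2^{\,k-1} \geq \lfloor n/2^{\,k-1}\rfloor = s$, this forces $|S| < n - s + 2$, contradicting the hypothesis $|S| \geq n - s + 2$. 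Therefore $r \geq k$, and the greedy chain has the required length. The one point needing care, and the real crux of the argument, is the block-counting inequality $|S| \leq (r+1) + \sum_{j<r} P_j$: one must observe that it is precisely the greedy gaps $(P_{j-1}, t_j)$ that minimality forbids from meeting $S$, while the ``free'' upper portions $[t_j, P_j]$ of each block may lie entirely in $S$, which is exactly why each block contributes at most $P_{j-1}+1$. Once this is set up correctly, the combination with the geometric decay of the $P_j$ is routine.
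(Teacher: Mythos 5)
Your overall strategy (greedy construction of a superincreasing chain, then an upper bound on $|S|$ when the chain is short) is sound and genuinely different from the paper's argument, but there is a real gap at the step where you write ``iterating downward from $P_r \leq n$''. The inequality $P_r \leq n$ is not justified and is false in general: $P_r = t_0 + \cdots + t_r$ is a \emph{sum} of $r+1$ elements of $S \subseteq \{1,\dots,n\}$, and the halting condition only gives the lower bound $P_r \geq \max S$, not an upper bound. For instance, if the process halts at $r=1$ with $t_0$ close to $n$, then $P_1 = t_0 + t_1$ is close to $2n$, and the derived inequality $P_0 + 1 \leq (n+1)/2$ fails badly. Since the chain $P_j + 1 \leq (n+1)/2^{r-j}$ is the engine of your final estimate $|S| \leq n+2-(n+1)/2^{r}$, the proof as written does not go through. (One cannot rescue $P_r \leq n$ from the hypothesis $|S| \geq n-s+2$ without circularity, since under that hypothesis the halting case never occurs at all.)

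The gap is fixable, because what \emph{is} true is $P_{r-1} \leq n-1$ (from $P_{r-1} < t_r \leq n$), so $P_j + 1 \leq (P_{r-1}+1)/2^{r-1-j}$ for $j \leq r-1$; only the last block needs separate treatment. Its contribution is at most $\min\{P_{r-1}+1,\ n-P_{r-1}\}$ (the second bound because all its elements lie in $[P_{r-1}+1, n]$). Splitting into the cases $2P_{r-1} \leq n-1$ and $2P_{r-1} \geq n$ and redoing the geometric sum, one recovers exactly your estimate $|S| \leq n+2-(n+1)/2^{r}$ in both cases, and the contradiction follows as you indicate. For comparison, the paper's proof avoids this issue entirely: it uses the density of $S$ (at most $s-2$ elements of $\{1,\dots,n\}$ are missing) to locate $s_j$ directly in the explicit interval $I_j = \{s_0+(2^{j-1}-1)s+1,\dots,2^{j-1}s\}$, each of size $s-s_0$, all of which stay inside $\{1,\dots,n\}$ because $2^{k-1}s \leq n$; no upper bound on partial sums is ever needed. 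Your route is a nice extremal counterpart, but as submitted the key inequality rests on a false premise and must be repaired as above.
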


\begin{proof}
Define $s_{0} = \min S$. Clearly, there are at most $s-2$ elements of $\{1, \dots, n\}$ that are not in $S$. Since $\{1, \dots, s_{0}-1 \} \cap S=\emptyset$, there are already $s_{0}-1$ elements which are not in $S$. As a consequence, there are at most $(s-2) - (s_{0} - 1) = s-s_{0}-1$ elements in $\{s_{0}+1, \dots, n\}$ which do not belong to $S$.\\
Now consider the interval \( I_1 = \{s_0 + 1, \dots, s\} \). Since $|I_{1}| = s-s_{0}$, there exists an element in $S \cap I_{1}$. Let us call this element $s_{1}$. Clearly, $s_{1} > s_{0}$. \\
Next, consider \( I_2 = \{s + s_0 + 1, \dots, 2s\} \). By similar reasoning, there must be some \( s_2 \in S \cap I_2 \), and we have \( s_2 > s_0 + s_1 \).\\
It is possible to do this for every interval of the form
\[
I_j = \left\{ s_0 + (2^{j-1} - 1)s + 1, \dots, 2^{j-1}s \right\},
\]
for \( j = 1, \dots, k \). Note that $j = k$ is the last value for which $2^{j-1}s \leq n$ still holds, since $2^{k-1}s=2^{k-1}\lf\frac{n}{2^{k-1}}\rf\leq n$. At each step, the set \( I_j \) contains at least one element of \( S \), and the corresponding element \( s_j \in S \cap I_j \) satisfies
\[
s_j > \sum_{i=0}^{j-1} s_i \text{ for } j=1,\dots, k.
\]
This yields $k+1$ elements $s_{0}, \dots, s_{k} \in S$ such that
$$\quad s_{j} > \sum_{i=0}^{j-1} s_{i}, \ \ \ \mbox{ for every }j=1,\dots, k,$$
as requested.
\end{proof}

We are now ready to prove that it is not possible to improve upon Construction \ref{constr:casenleqm}, that is, the codes defined as in Construction \ref{constr:casenleqm} exhibit the maximum possible number of non-zero weights for an $\Fqm$-linear rank metric code with the same parameters.

\begin{theorem} \label{th:upperboundn<m}
Assume that $n \leq m$. Let $s = \max\left\{\floor*{\frac{n}{2^{k-1}}}, 1\right\}$. Then
\[ L_{\rk}(n, m, k, q) \leq n - s+ 1.\]
\end{theorem}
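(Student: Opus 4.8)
The plan is to prove the upper bound by contradiction, assuming a nondegenerate $[n,k]_{q^m/q}$ code $\C$ has more than $n-s+1$ distinct nonzero weights and deriving an impossibility from the $\F_{q^m}$-linear structure. Since the case $s=1$ is already settled (the bound reads $L_{\rk} \leq n$, which is the trivial bound), I would assume $s = \lfloor n/2^{k-1}\rfloor > 1$, i.e. $n \geq 2^k$. Suppose for contradiction that $\C$ achieves at least $n-s+2$ distinct weights, and set $S := \mathrm{WS}(\C) \subseteq \{1,\dots,n\}$, so that $|S| \geq n-s+2$. The goal is to show that such an $S$ forces the existence of $k+1$ codewords whose weights grow too fast to fit inside a $k$-dimensional space.

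First I would invoke Lemma~\ref{lem:combinatoriallemma}, which applies precisely because $|S|\geq n-s+2$ and $n\geq 2^k$: it yields weights $s_0 < s_1 < \cdots < s_k$ in $S$ satisfying the superincreasing condition $s_j > \sum_{i=0}^{j-1} s_i$ for every $1\leq j \leq k$. For each $j$ choose a codeword $\cc_j \in \C$ with $\w(\cc_j) = s_j$. The crux of the argument is then to show that these $k+1$ codewords $\cc_0,\dots,\cc_k$ are $\F_{q^m}$-linearly independent, which is impossible since $\dim_{\Fqm}\C = k$. To establish independence, I would argue by the superincreasing rank condition: writing $U_j := \langle (\cc_j)_1,\dots,(\cc_j)_n\rangle_{\F_q}\subseteq \F_{q^m}$ for the $\F_q$-column space of $\cc_j$ (so $\dim_{\F_q} U_j = s_j$), the key inequality $s_j > \sum_{i<j} s_i \geq \dim_{\F_q}(U_0 + \cdots + U_{j-1})$ shows that $U_j \not\subseteq U_0 + \cdots + U_{j-1}$ for each $j$. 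This lets me build the independence inductively: any nontrivial $\F_{q^m}$-linear relation $\sum_j \alpha_j \cc_j = \mathbf 0$ with the top nonzero coefficient $\alpha_J$ would force $\alpha_J \cc_J$ to lie in the span of the lower-weight codewords, contradicting that the rank (column space dimension) of $\cc_J$ exceeds what the earlier codewords can jointly generate.

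The step I expect to be the main obstacle is making the passage from the combinatorial weight inequalities to genuine $\F_{q^m}$-linear independence fully rigorous. The subtlety is that $\F_{q^m}$-linear combinations can mix columns in ways that are not controlled by $\F_q$-dimensions alone — scaling a codeword by $\alpha \in \F_{q^m}$ preserves its rank weight but rotates its column space. The clean way around this is to work position-by-position or to track the $\F_q$-span of supports: I would verify that if $\cc_J = \sum_{i<J}\beta_i \cc_i$ for some $\beta_i \in \F_{q^m}$, then the column space of $\cc_J$ must be contained in the $\F_q$-span generated by the (scaled) entries of the $\cc_i$ for $i<J$, whose total $\F_q$-dimension is at most $\sum_{i<J} s_i < s_J = \w(\cc_J)$, the desired contradiction. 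One must be careful that $\w(\beta_i \cc_i) = \w(\cc_i) = s_i$ and that the union of column spaces has dimension at most the sum, so the superincreasing property is exactly what rules out the containment. Once independence is secured, the contradiction with $\dim_{\Fqm}\C = k$ is immediate, establishing $|\mathrm{WS}(\C)| \leq n-s+1$ and hence $L_{\rk}(n,m,k,q) \leq n-s+1$.
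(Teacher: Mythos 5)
Your proposal is correct and follows essentially the same route as the paper: reduce to $s\geq 2$, apply Lemma~\ref{lem:combinatoriallemma} to extract $k+1$ superincreasing weights, and use subadditivity of the rank weight (equivalently, that the $\F_q$-span of the entries of $\sum_i \beta_i\cc_i$ lies in $\sum_i \beta_i U_i$, of dimension at most $\sum_i s_i$) to force $\F_{q^m}$-linear independence of $k+1$ codewords, contradicting $\dim_{\F_{q^m}}\C = k$. The extra care you take with column spaces is exactly the justification the paper compresses into the phrase ``subadditivity of the rank weight under addition.''
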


\begin{proof}
If \( n < 2^k \), then \( s = 1 \), and the assertion becomes \( L_{\rk}(n, m, k, q) \leq n \), which is trivially true. So we may assume \( n \geq 2^k \), implying \( s \geq 2 \).
Suppose, by contradiction, that there exists an $[n,k]_{q^m/q}$ code \( \C \) with weight spectrum \( S \) such that \( |S| \geq n - s + 2 \).
By Lemma~\ref{lem:combinatoriallemma}, there exist \( k+1 \) elements \( s_0, \dots, s_k \in S \) such that
\begin{equation} \label{eq:relationsweightcombinatorics}
s_j > \sum_{i=0}^{j-1} s_i \quad \text{for every } 1 \leq j \leq k.
\end{equation}
Let \( \cc_0, \dots, \cc_k \in \C \) be codewords such that \( \w(\cc_j) = s_j \) for \( 0 \leq j \leq k \).
We claim that the codewords \( \cc_0, \dots, \cc_k \) are linearly independent. Indeed, suppose for some \( 1 \leq j \leq k \), we have a linear dependence of the form
\[
\cc_j = \sum_{i=0}^{j-1} \alpha_i \cc_i, \quad \alpha_i \in \F_{q^m}.
\]
Then, by the subadditivity of the rank weight under addition,
\[
\w(\cc_j) \leq \sum_{i=0}^{j-1} \w(\cc_i) = \sum_{i=0}^{j-1} s_i,
\]
which contradicts \eqref{eq:relationsweightcombinatorics}, since \( \w(\cc_j) = s_j > \sum_{i=0}^{j-1} s_i \). Therefore $\cc_{j}$ is not a linear combination of $\cc_{0}, \dots, \cc_{j-1}$, and the vectors $\cc_{0}, \dots, \cc_{j}$ are linearly independent for any $1 \leq j \leq k$.

Thus, we have \( k+1 \) linearly independent codewords in \( \C \), contradicting the assumption that $\C$ has dimension $k$. This proves the result.
\end{proof}

Combining Theorem~\ref{thm:constrnleqm} and Theorem~\ref{th:upperboundn<m}, we obtain Theorem~\ref{thm:small_n}. In particular, we completely characterize the values of \( n, m, k \) for which full weight spectrum (FWS) codes exist.

\begin{proposition} \label{prop:characterFWSn<m}
Let \( n \leq m \). A FWS \( [n, k]_{q^m/q} \) code exists if and only if \( n < 2^{k} \). In this case, the code \( \mathcal{C}_{\lambda,\mathbf{n}} \)  defined in Construction~\ref{constr:casenleqm} is a FWS code.
\end{proposition}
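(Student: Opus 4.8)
The plan is to derive this characterization directly from the exact formula for $L_{\rk}(n,m,k,q)$ established in Theorem~\ref{thm:small_n}, together with the explicit construction analyzed in Theorem~\ref{thm:constrnleqm}. The first step is to translate the definition of an FWS code into a numerical condition: since $n \leq m$ we have $\min\{m,n\} = n$, so by definition a $[n,k]_{q^m/q}$ code is FWS exactly when its weight spectrum equals $\{1,2,\ldots,n\}$, equivalently when it attains $n$ distinct nonzero weights. As the maximum such number is $L_{\rk}(n,m,k,q)$ and this is trivially at most $n$, an FWS code exists if and only if $L_{\rk}(n,m,k,q) = n$.

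Next, invoking Theorem~\ref{thm:small_n}, which gives $L_{\rk}(n,m,k,q) = n - s + 1$ with $s = \max\{\lfloor n/2^{k-1}\rfloor, 1\}$, the equality $L_{\rk}(n,m,k,q) = n$ holds if and only if $s = 1$. The heart of the argument is then the elementary equivalence $s = 1 \iff n < 2^k$: since $s \geq 1$ always, we have $s = 1$ precisely when $\lfloor n/2^{k-1}\rfloor \leq 1$, which in turn holds exactly when $n/2^{k-1} < 2$, i.e. $n < 2^{k}$. Chaining these equivalences establishes the existence statement in both directions.

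Finally, for the ``in this case'' clause, I would observe that when $n < 2^k$ we have $s = 1$, so Theorem~\ref{thm:constrnleqm} guarantees that the code $\mathcal{C}_{\lambda,\mathbf{n}}$ of Construction~\ref{constr:casenleqm} has weight spectrum $\{s, s+1, \ldots, n\} = \{1, 2, \ldots, n\}$, which is exactly the full weight spectrum; hence it is an explicit FWS code, as required. I expect no serious obstacle: the proposition is a corollary of the two prior theorems, and the only mildly delicate point is the floor manipulation relating $s = 1$ to $n < 2^k$, which is routine.
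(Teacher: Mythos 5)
Your proof is correct and matches the paper's (implicit) argument: the paper presents this proposition as an immediate consequence of Theorem~\ref{thm:small_n} and Theorem~\ref{thm:constrnleqm}, which is exactly the chain of equivalences $L_{\rk}(n,m,k,q)=n \iff s=1 \iff n<2^{k}$ that you spell out. The floor manipulation and the appeal to the construction for the explicit FWS code are both handled correctly.
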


Note that, since the weight spectrum of an FWS code is \( \{1, \ldots, n\} \), such a code must have minimum distance 1. This situation arises in Construction~\ref{constr:casenleqm} either when \( z > 0 \), or when \( z = 0 \) and \( \left\lfloor \frac{n}{2^{k-1}} \right\rfloor \geq 1 \), with the additional condition that \( n < 2^{k} \). We now provide illustrative examples of FWS code constructions in both cases.

\begin{example}
Let \( n = 7 \), \( m = 7 \), and \( k = 3 \). Since \( n = 7 < 2^k = 8 \), by Proposition \ref{prop:characterFWSn<m}, we know that a full weight spectrum (FWS) code exists, and Construction~\ref{constr:casenleqm} provides an explicit construction. In this case, we have \( z = 0 \), and Construction~\ref{constr:casenleqm} gives the code \( \C_{\lambda,\mathbf{n}} \) with \( \mathbf{n} = (n_1, n_2, n_3) = (4, 2, 1) \), and generator matrix
\[
G_{\lambda, \mathbf{n}} = 
\begin{bmatrix}
\uu_{\lambda, 4} & {\bf 0} & {\bf 0} \\
{\bf 0} & \uu_{\lambda, 2} & {\bf 0} \\
{\bf 0} & {\bf 0} & \uu_{\lambda, 1}
\end{bmatrix}.
\]
This code has codewords of every weight from 1 to 7. Therefore, \( \C \) is an FWS \( [7, 3]_{q^{7}/q} \) code. 
\medskip

\noindent We now consider the case \( n = 7 \), \( m = 7 \), and \( k = 4 \). Since \( n = 7 < 2^k = 16 \), again by Proposition \ref{prop:characterFWSn<m}, we know that a full weight spectrum (FWS) code exists, and Construction~\ref{constr:casenleqm} provides an explicit construction. In this case, we have \( z = 1 > 0 \). Construction~\ref{constr:casenleqm} gives the code \( \C_{\lambda,\mathbf{n}} \) with \( \mathbf{n} = (n_1, n_2, n_3, n_4) = (3, 2, 1, 1) \), and generator matrix
\[
G_{\lambda, \mathbf{n}} = 
\begin{bmatrix}
\uu_{\lambda, 3} & {\bf 0} & {\bf 0} & {\bf 0} \\
{\bf 0} & \uu_{\lambda, 2} & {\bf 0} & {\bf 0} \\
{\bf 0} & {\bf 0} & \uu_{\lambda, 1} & {\bf 0} \\
{\bf 0} & {\bf 0} & {\bf 0} & \uu_{\lambda, 1}
\end{bmatrix}.
\]

The code \( \C_{\lambda,\mathbf{n}} \) contains codewords of every rank weight from \( 1 \) to \( 7 \), and therefore is an FWS \( [7,4]_{q^7/q} \) code.

$\hfill \lozenge$
\end{example}

We end this section with the following remark regarding the parameter $s$ introduced in Theorem \ref{thm:small_n}.

\begin{remark}
\label{rmk:nsmallandsgeqd}
Assume that $n \leq m$. Let $s = \max\left\{\floor*{\frac{n}{2^{k-1}}}, 1\right\}$. We remark that $s \geq d$ where $d$ is the minimum distance of an $\nkdqm$ code, with $\lvert \mathrm{WS}(\C)\rvert=L_{\rk}(n,m,k,q)$. By Theorem \ref{thm:small_n}, we have $L_{\rk}(n,m,k,q)=n-s+1\leq n$. Since $d$ is the minimum distance, we can have nonzero weights with values from $d$ up to $n$ and so at most $n-d+1$. Since $L_{\rk}(n,m,k,q)=n-s+1$, it follows that \[
 n-s+1\leq n-d+1,\]
and so $s\geq d$.
\end{remark}

\section{The case when \texorpdfstring{$n > m$}{n>m}}
\label{sec:n>m}

We now turn our attention to the case \( n > m \). First, recall that, without loss of generality, we may restrict our study to nondegenerate codes. Indeed, any degenerate code can be isometrically embedded into a smaller ambient space where it becomes nondegenerate, and the analysis can be carried out there, cf \cite[Section 3.1]{alfarano2022linear}. We also note that the following bound on \( n \) applies to nondegenerate codes. 

\begin{proposition} [see \textnormal{\cite[Corollary 6.5]{jurrius2017defining}}]
\label{prop:nleqkm}
Let $\C$ be a nondegenerate $[n,k]_{q^m/q}$ code. Then $n\leq km$.   
\end{proposition}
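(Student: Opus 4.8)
The plan is to argue directly from the definition of nondegeneracy recalled above, via an elementary dimension count over $\Fq$. First I would fix any generator matrix $G \in \Fqm^{k \times n}$ of $\C$ and denote its columns by $g_1, \dots, g_n \in \Fqmk$. By definition, $\C$ being nondegenerate means that the $\Fq$-linear column span $\langle g_1, \dots, g_n \rangle_{\Fq}$, viewed as an $\Fq$-subspace of $\Fqmk$, has $\Fq$-dimension exactly $n$.

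Next I would observe that this span is generated by precisely $n$ vectors, namely $g_1, \dots, g_n$. A spanning family of size $n$ for an $\Fq$-vector space of dimension $n$ must be an $\Fq$-basis of that space; in particular the columns $g_1, \dots, g_n$ are $\Fq$-linearly independent. This is the only place where the numerical coincidence between the number of columns and the claimed dimension is used.

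Finally, these $n$ vectors are $\Fq$-linearly independent elements of the ambient space $\Fqmk$, which as an $\Fq$-vector space has dimension $\dim_{\Fq} \Fqmk = k \cdot \dim_{\Fq} \Fqm = km$, since $\Fqm$ is an extension of degree $m$ over $\Fq$. An $\Fq$-linearly independent family contained in a space of $\Fq$-dimension $km$ can contain at most $km$ members, whence $n \leq km$.

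There is no genuine obstacle here: the whole content reduces to the fact that $\Fqmk$ has $\Fq$-dimension $km$, together with the observation that nondegeneracy forces the $n$ columns to be $\Fq$-independent. The only point meriting a moment of care is the translation between the ``span has $\Fq$-dimension $n$'' formulation of nondegeneracy and the ``columns are $\Fq$-independent'' statement that feeds the bound; the two are equivalent precisely because the number of columns equals the claimed dimension. One may also note, although it is not needed for the argument, that this $\Fq$-dimension is independent of the chosen generator matrix, since two generator matrices differ by multiplication on the left by an element of $\GL_k(\Fqm)$, which induces an $\Fq$-linear bijection of $\Fqmk$ and hence preserves $\Fq$-dimensions of column spans.
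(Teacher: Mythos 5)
Your proof is correct. The paper does not prove this proposition itself but simply cites \cite[Corollary 6.5]{jurrius2017defining}; your elementary dimension count --- nondegeneracy forces the $n$ columns of a generator matrix to be $\F_q$-linearly independent in $\F_{q^m}^k$, which has $\F_q$-dimension $km$ --- is exactly the standard argument underlying that cited result, and it is complete as written.
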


If $\C$ is a nondegenerate code and $n=km$, then $\C$ is a one-weight code, that is called a \textbf{simplex rank-metric code}, see \cite{alfarano2022linear,Randrianarisoa2020ageometric}.\\
We define the \textbf{simplex defect} of a nondegenerate $[n,k]_{q^m/q}$ code $\mathcal{C}$ the parameter \[\mu:=km-n,\] which represents how far is $\mathcal{C}$ from being a simplex rank-metric code.\\
Then define \[t:=\floor*{\frac{\mu}{m}}=\floor*{\frac{km-n}{m}}=k-\ceil*{\frac{n}{m}}\]
and
\begin{equation}
    \label{eq:a}
    a:=n-(k-t-2)m.
\end{equation}
Note that 
\begin{equation}
\label{eq:boundsonaandtfirst}
0\leq t\leq k-2 \ \ \ \mbox{and} \ \ \ n-(k-2)m\leq a\leq n.\end{equation}

In this section, we provide the following result, which corresponds to the case \( n > m \) in the Main Theorem.

\begin{theorem}
\label{thm:big_n}
Assume \( n > m \). Let \( h = \max\left\{ \left\lfloor \frac{a}{2^{t+1}} \right\rfloor, 1 \right\} \). Then
\[
L_{\rk}(m, n, k, q) = m - h + 1.
\]
\end{theorem}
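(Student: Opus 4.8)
The plan is to prove the identity by establishing the two inequalities $L_{\rk}(n,m,k,q)\ge m-h+1$ and $L_{\rk}(n,m,k,q)\le m-h+1$ separately, mirroring the architecture of the case $n\le m$ (Theorem~\ref{thm:small_n}). Throughout I would work with nondegenerate codes, so that Proposition~\ref{prop:nleqkm} gives $n\le km$ and the invariants $t=k-\lceil n/m\rceil$ and $a$ of \eqref{eq:a} are well defined; a direct check gives $0\le t\le k-2$ and, crucially, $m<a\le 2m$ (from $(k-t-1)m<n\le(k-t)m$). Two sanity checks guide the bookkeeping: at $n=km$ one gets $t=0$, $a=2m$, $h=m$, hence $L=1$ (the simplex code), while at the formal boundary $n=m$ one gets $h=\lfloor a/2^{t+1}\rfloor=\lfloor m/2^{k-1}\rfloor$, agreeing with Theorem~\ref{thm:small_n}.

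For the lower bound I would exhibit an explicit code adapting Construction~\ref{constr:casenleqm}. Take a block-diagonal generator matrix of the shape \eqref{eq:generatormatrixlambdageneral} in which $k-t-2$ blocks are the \emph{full} rows $\uu_{\lambda,m}=(1,\lambda,\dots,\lambda^{m-1})$ and the remaining $t+2$ blocks have lengths $(n_1,\dots,n_{t+2})=\Psi(a,t+2)$ as in Definition~\ref{def:definitionPsi}; since $a\le 2m$ every block length is at most $m$, and the lengths sum to $(k-t-2)m+a=n$ over $k$ blocks. Running the telescoping argument of Theorem~\ref{thm:constrnleqm} on the $t+2$ non-full blocks produces codewords whose entries span $\langle1,\lambda,\dots,\lambda^{r-1}\rangle_{\Fq}$ for \emph{every} $r$ between $n_{t+2}=h$ and $a$; the rank weight is $\min\{r,m\}$, so these realize all weights in $\{h,\dots,m\}$, whereas activating any full block contributes only the already-present weight $m$. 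Since the smallest block has length $h$, no codeword has weight below $h$, giving weight spectrum exactly $\{h,\dots,m\}$, hence $m-h+1$ weights. When $h=1$ (i.e. $a<2^{t+1}$) I would instead pad with length-one blocks as in the $z>0$ branch of Construction~\ref{constr:casenleqm}, obtaining the full spectrum $\{1,\dots,m\}$.

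For the upper bound I would pass to the geometric ($q$-system) description of \cite{Randrianarisoa2020ageometric}: a nondegenerate code corresponds to an $n$-dimensional $\Fq$-subspace $U\subseteq\Fqm^k$ spanning over $\Fqm$, and the weight of the codeword indexed by an $\Fqm$-hyperplane $H$ is $n-\dim_{\Fq}(U\cap H)$, where $\dim_{\Fq}(U\cap H)$ ranges over $\{n-m,\dots,n-1\}$. Thus the number of distinct weights equals the number of distinct intersection dimensions, and the task is to bound this by $m-h+1$. Assuming a weight spectrum $S\subseteq\{1,\dots,m\}$ with $|S|\ge m-h+2$, I would first prove a capped analogue of Lemma~\ref{lem:combinatoriallemma}: since weights cannot exceed $m$, the greedy ``doubling intervals'' now saturate at $m$ rather than at $n$, and the hypothesis $|S|\ge m-h+2$ (with $2^{t+1}h$ comparable to $a$) should force a superincreasing chain $w_0<\dots<w_r$, $w_j>\sum_{i<j}w_i$, of length $r+1=t+2$. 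By subadditivity of the rank weight these weights are attained by $\Fqm$-linearly independent codewords spanning a nondegenerate subcode, and the contradiction must be extracted by confronting this subcode with the length bound $n\le km$ of Proposition~\ref{prop:nleqkm} and the simplex-defect bookkeeping encoded in $t$.

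The main obstacle is precisely this final extraction. A superincreasing chain in $\{1,\dots,m\}$ has length only $O(\log m)$, so it can never by itself contradict $\dim\mathcal{C}=k$; the weight cap at $m$ must therefore be leveraged together with the geometry. I expect the most transparent route is an induction that strips one full-weight-$m$ direction at a time, replacing $(n,k)$ by $(n-m,\,k-1)$ — a substitution that leaves $a$, $t$ and $h$ invariant — thereby reducing the general case to the base regime $m<n\le 2m$ (where $t=k-2$ and $a=n$). In that base case one must show directly, from the intersection-dimension profile of $U$, that at least $h-1$ of the $m$ a priori possible intersection dimensions cannot occur. Guaranteeing that the inductive step loses neither weights nor nondegeneracy, and settling the base case, is where the genuine difficulty lies; the lower bound, by contrast, is essentially the capped version of the $n\le m$ construction.
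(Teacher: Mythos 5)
Your lower bound is essentially the paper's: the block-diagonal code with $k-t-2$ full rows $\uu_{\lambda,m}$ and $t+2$ rows of lengths given by $\Psi(a,t+2)$ is exactly Construction~\ref{constr:casen>m} in its main case, the telescoping weight computation is Theorem~\ref{thm:constrn>m}, and your padding remark covers the degenerate subcases ($z>0$ and $a<t+2$). The problem is the upper bound. You correctly set up the capped combinatorial lemma (this is Lemma~\ref{lem:combinatoriallemman>m}, producing $t+2$ weights $s_0<\dots<s_{t+1}$ with $s_0\le h$, $s_j\le 2^{j-1}h$ and $s_j>\sum_{i<j}s_i$), and you correctly observe that superincreasing weights force the corresponding codewords to be $\F_{q^m}$-linearly independent. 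But you then stop, conceding that you cannot extract the contradiction; and the inductive route you sketch (stripping weight-$m$ directions to reduce $(n,k)$ to $(n-m,k-1)$) is neither carried out nor obviously implementable: a weight-$m$ codeword does not in general single out an $m$-dimensional coordinate block to delete, and preserving nondegeneracy, the invariants $a,t,h$, and the full weight spectrum under such a reduction is precisely the kind of claim that needs proof. As written, the upper bound is a genuine gap.

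The missing idea is to run the dimension count on the \emph{dual} side of the $q$-system rather than on the code itself. If $U$ is the system associated with $\C$ and $\cc_i=\xx_i G$ has weight $s_i$, then Proposition~\ref{prop:characweightgeometricdual} gives $\dim_{\F_q}\bigl(U^{\perp'}\cap\langle\xx_i\rangle_{\F_{q^m}}\bigr)=m-s_i$. Since the $\xx_i$ are $\F_{q^m}$-linearly independent, the $t+2$ lines $\langle\xx_i\rangle_{\F_{q^m}}$ are in direct sum, hence so are their intersections with $U^{\perp'}$, and therefore $\sum_{i=0}^{t+1}(m-s_i)\le\dim_{\F_q}(U^{\perp'})=km-n$. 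Feeding in the interval bounds $m-s_0\ge m-h+1$ and $m-s_i\ge m-2^{i-1}h$ yields $(t+2)m+1-2^{t+1}h\le km-n$, i.e. $a\le 2^{t+1}\left\lfloor a/2^{t+1}\right\rfloor-1\le a-1$, the desired contradiction. This single inequality is exactly how the paper (Theorem~\ref{th:boundcasen>m}) leverages the weight cap at $m$ together with the length $n$, and it replaces your entire proposed induction and base case; without it, or an equivalent, your argument for the upper bound is incomplete.
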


To prove this result, we rely on the geometric description of linear rank-metric codes via systems, as developed in~\cite{Randrianarisoa2020ageometric,alfarano2022linear}, and we recall the notion of the dual of such spaces.

\subsection{The geometry of rank-metric codes and dual of a subspace}

Based on the classification of $\F_{q^m}$-linear isometries of the metric space $\F_{q^m}^n$ endowed with the rank distance, two $[n,k]_{q^m/q}$ codes $\C$ and $\C'$ are said to be \textbf{equivalent} if and only if there exists a matrix \( A \in \mathrm{GL}(n, q) \) such that
\[
\C' = \C A = \{ \vv A : \vv \in \C \}.
\]

The geometric counterpart of rank-metric codes is captured by the notion of \emph{systems}. An $\nkdqm$ (or simply $[n,k]_{q^m/q}$) \textbf{system} \( U \) is an $\F_q$-subspace of \( \F_{q^m}^k \) of dimension \( n \) such that \( \langle U \rangle_{\F_{q^m}} = \F_{q^m}^k \). The parameter \( d \) is defined by
\[
d = n - \max\left\{ \dim_{\F_q}(U \cap H) \,\middle|\, H \text{ is an } \F_{q^m}\text{-hyperplane of } \F_{q^m}^k \right\}.
\]

Two $\nkdqm$ systems \( U \) and \( U' \) are said to be \textbf{equivalent} if there exists an invertible matrix \( B \in \mathrm{GL}(k, \F_{q^m}) \) such that
\[
U' = U \cdot B = \{ \uu B \mid \uu \in U \}.
\]

The correspondence between rank-metric codes and systems is as follows. Given a nondegenerate $\nkdqm$ code \( \C \) with generator matrix \( G \), the $\F_q$-span of the columns of \( G \) defines an $\nkdqm$ system \( U \), referred to as a \textbf{system associated with} \( \C \).\\
Conversely, if \( U \) is an $\nkdqm$ system, then one may construct a generator matrix \( G \) by choosing an $\F_q$-basis of \( U \) and placing the basis vectors as columns. The resulting code generated by \( G \) is an $\nkdqm$ code, called a \textbf{code associated with} \( U \).

Let \( \sigma \) denote the standard inner product on \( \F_{q^m}^k \), i.e.
\[
\sigma\colon \F_{q^m}^k \times \F_{q^m}^k \to \F_{q^m}, \quad \sigma\left((x_1,\ldots,x_k), (y_1,\ldots,y_k)\right) = \sum_{i=1}^k x_i y_i.
\]
For a vector \( \xx = (x_1, \ldots, x_k) \in \F_{q^m}^k \), its orthogonal complement with respect to \( \sigma \) is given by
\[
\xx^\perp = \left\{ \mathbf{y} = (y_1, \ldots, y_k) \in \F_{q^m}^k \,\middle|\, \sigma(\xx, \mathbf{y}) = 0 \right\}.
\]

The following result establishes the key connection between the rank weight of a codeword and the intersection of a hyperplane with the system associated with the code (see also \cite{alfarano2022linear,neri2021geometry}).

\begin{theorem}[see \textnormal{\cite{Randrianarisoa2020ageometric}}]
\label{th:connection}
Let \( \C \) be a nondegenerate $[n,k]_{q^m/q}$ code. Let $G$ be a generator matrix of $\C$, and let \( U \subseteq \F_{q^m}^k \) be the system associated with $\C$, defined as the $\F_q$-span of the columns of \( G \). Then, for any \( \xx \in \F_{q^m}^k \), the weight of the codeword \( \xx G \in \C \) is given by
\begin{equation}\label{eq:relweight}
\w(\xx G) = n - \dim_{\F_q}(U \cap \xx^\perp).
\end{equation}
\end{theorem}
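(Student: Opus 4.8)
The final statement to prove is Theorem~\ref{th:connection}, which relates the rank weight of a codeword $\xx G$ to the $\F_q$-dimension of the intersection $U \cap \xx^\perp$, where $U$ is the $\F_q$-span of the columns of the generator matrix $G$. The plan is to unwind the definition of rank weight in terms of the matrix product $\xx G$ and then identify the kernel of the resulting linear map with the intersection $U \cap \xx^\perp$.

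First I would fix an $\F_q$-basis $u_1, \ldots, u_n$ of $U$, so that the columns of $G$ are precisely these vectors, $G = [u_1 \mid u_2 \mid \cdots \mid u_n]$ with each $u_j \in \F_{q^m}^k$. For a fixed $\xx \in \F_{q^m}^k$, the codeword $\xx G$ has entries $(\sigma(\xx, u_1), \ldots, \sigma(\xx, u_n)) \in \F_{q^m}^n$, where $\sigma$ is the standard inner product. By definition, the rank weight $\w(\xx G)$ is the $\F_q$-dimension of the $\F_q$-span of these $n$ entries, i.e. $\w(\xx G) = \dim_{\F_q} \langle \sigma(\xx, u_1), \ldots, \sigma(\xx, u_n) \rangle_{\F_q}$.

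The key step is to view the assignment $u \mapsto \sigma(\xx, u)$ as an $\F_q$-linear map $\varphi_\xx \colon U \to \F_{q^m}$, $u \mapsto \sigma(\xx, u)$. The image of $\varphi_\xx$ is exactly the $\F_q$-span $\langle \sigma(\xx, u_1), \ldots, \sigma(\xx, u_n) \rangle_{\F_q}$, so its dimension equals $\w(\xx G)$. The kernel of $\varphi_\xx$ is $\{u \in U : \sigma(\xx, u) = 0\} = U \cap \xx^\perp$, directly from the definition of the orthogonal complement $\xx^\perp$. Applying the rank--nullity theorem to $\varphi_\xx$ over $\F_q$, using that $\dim_{\F_q} U = n$ by the definition of a system, yields
\[
\w(\xx G) = \dim_{\F_q}(\operatorname{im} \varphi_\xx) = \dim_{\F_q} U - \dim_{\F_q}(\ker \varphi_\xx) = n - \dim_{\F_q}(U \cap \xx^\perp),
\]
which is exactly the claimed formula~\eqref{eq:relweight}.

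I do not anticipate a genuine obstacle here, as the result is essentially a rank--nullity computation once the right linear map is identified; the only point requiring mild care is verifying that $\varphi_\xx$ is indeed $\F_q$-linear (rather than $\F_{q^m}$-linear) and that its image is genuinely an $\F_q$-subspace, which is immediate since $\sigma$ is $\F_{q^m}$-bilinear and hence restricts to an $\F_q$-bilinear pairing. The nondegeneracy hypothesis guarantees $\dim_{\F_q} U = n$, so the formula is well-posed; without it the dimension of $U$ could be smaller and the statement would fail. The subtlety worth emphasizing in the writeup is that the rank weight, an a priori $\F_{q^m}$-flavored quantity, is computed entirely through $\F_q$-linear algebra on $U$, which is precisely what makes the geometric reformulation via systems so useful.
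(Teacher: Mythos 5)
Your argument is correct: the columns of $G$ form an $\F_q$-basis of $U$ precisely because $\C$ is nondegenerate (so the $n$ columns span an $n$-dimensional $\F_q$-space), the map $\varphi_\xx\colon u \mapsto \sigma(\xx,u)$ is $\F_q$-linear with image $\langle \sigma(\xx,g_1),\ldots,\sigma(\xx,g_n)\rangle_{\F_q}$ and kernel $U\cap\xx^\perp$, and rank--nullity gives \eqref{eq:relweight}. Note that the paper does not prove this statement itself --- it is quoted from \cite{Randrianarisoa2020ageometric} --- but your rank--nullity argument is the standard proof of that result, so there is nothing to compare beyond confirming it is complete and correct.
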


Systems associated with equivalent codes are themselves equivalent, and the converse also holds. For further details, see~\cite{alfarano2022linear}.

We now recall the concept of the dual of an $\F_q$-subspace of $\F_{q^m}^k$.

Let $\mathrm{Tr}_{q^m/q}(x) = x + x^q + \ldots + x^{q^{m-1}}$ denote the trace function from $\F_{q^m}$ to $\F_q$. Define the $\F_q$-bilinear form
\[
\begin{array}{cccc}
    \sigma' \colon & \F_{q^m}^k \times \F_{q^m}^k & \longrightarrow & \F_q \\
    & (\xx, \mathbf{y}) & \longmapsto & \mathrm{Tr}_{q^m/q}(\sigma(\xx, \mathbf{y})),
\end{array}
\]
where \( \sigma \) is the standard inner product on \( \F_{q^m}^k \). The form \( \sigma' \) is a nondegenerate, reflexive bilinear form on \( \F_{q^m}^k \), viewed as an $\F_q$-vector space of dimension \( km \). Let \( \perp' \) denote the orthogonal complement map with respect to \( \sigma' \). For any $\F_q$-subspace \( U \subseteq \F_{q^m}^k \), the \textbf{dual subspace} of \( U \) is defined as
\[
U^{\perp'} = \left\{ \mathbf{v} \in \F_{q^m}^k \,\middle|\, \sigma'(\uu, \mathbf{v}) = 0 \text{ for all } \uu \in U \right\}.
\]
Clearly, \( \dim_{\F_q}(U^{\perp'}) = km - \dim_{\F_q}(U) \). Moreover, for any $\F_{q^m}$-subspace \( W \subseteq \F_{q^m}^k \), we have \( W^{\perp'} = W^\perp \) (see \cite{polverino2010linear}). As a consequence, we have the following result.

\begin{proposition}[see~\textnormal{\cite[Property 2.6]{polverino2010linear}}]
\label{prop:weightdual}
Let \( U \) be an $\F_q$-subspace and \( W \) an $\F_{q^m}$-subspace of \( \F_{q^m}^k \). Then:
\[
\dim_{\F_q}(U^{\perp'} \cap W^\perp) = \dim_{\F_q}(U \cap W) + km - \dim_{\F_q}(U) - \dim_{\F_q}(W).
\]
\end{proposition}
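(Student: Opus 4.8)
The plan is to reduce the entire computation to the single $\F_q$-bilinear form $\sigma'$ and then to apply the elementary dimension formula for sums of subspaces. The pivotal observation is the identity $W^{\perp} = W^{\perp'}$, recalled just before the statement, which holds precisely because $W$ is an $\F_{q^m}$-subspace. Using it, I would rewrite the left-hand side as $U^{\perp'} \cap W^{\perp} = U^{\perp'} \cap W^{\perp'}$, so that both factors are now orthogonal complements with respect to the \emph{same} form $\sigma'$.

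Next I would exploit the fact that, for any $\F_q$-bilinear form, orthogonality turns sums into intersections: for arbitrary $\F_q$-subspaces $A, B \subseteq \F_{q^m}^k$ one has $A^{\perp'} \cap B^{\perp'} = (A+B)^{\perp'}$. Indeed, a vector is $\sigma'$-orthogonal to every element of $A+B$ if and only if it is orthogonal to every element of $A$ and to every element of $B$, by bilinearity; no nondegeneracy is needed here. Applying this with $A = U$ and $B = W$ gives $U^{\perp'} \cap W^{\perp'} = (U+W)^{\perp'}$.

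It then remains only to compute $\dim_{\F_q}\big((U+W)^{\perp'}\big)$. Because $\sigma'$ is nondegenerate on the $km$-dimensional $\F_q$-space $\F_{q^m}^k$, the dimension-complementarity relation $\dim_{\F_q}(X^{\perp'}) = km - \dim_{\F_q}(X)$ recalled above applies with $X = U + W$, yielding $\dim_{\F_q}\big((U+W)^{\perp'}\big) = km - \dim_{\F_q}(U+W)$. Finally, the Grassmann identity $\dim_{\F_q}(U+W) = \dim_{\F_q}(U) + \dim_{\F_q}(W) - \dim_{\F_q}(U \cap W)$ substitutes in to produce exactly $\dim_{\F_q}(U \cap W) + km - \dim_{\F_q}(U) - \dim_{\F_q}(W)$, as claimed.

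There is no genuine obstacle in this argument; the whole content is packaged into the reduction $W^{\perp} = W^{\perp'}$, after which only standard linear algebra over $\F_q$ is used. The one point I would be careful about is bookkeeping: every dimension must be read over $\F_q$ throughout (so that, for the $\F_{q^m}$-subspace $W$, one has $\dim_{\F_q}(W) = m\,\dim_{\F_{q^m}}(W)$), and the nondegeneracy and reflexivity of $\sigma'$ — which guarantee both the complementarity of dimensions and the identity $W^{\perp} = W^{\perp'}$ — are exactly the properties recorded in the discussion preceding the statement.
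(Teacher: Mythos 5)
Your argument is correct: the reduction via $W^{\perp}=W^{\perp'}$, the identity $U^{\perp'}\cap W^{\perp'}=(U+W)^{\perp'}$, the complementarity $\dim_{\F_q}(X^{\perp'})=km-\dim_{\F_q}(X)$ for the nondegenerate form $\sigma'$, and the Grassmann formula combine exactly as you describe. The paper itself gives no proof of this proposition (it is quoted from \cite[Property 2.6]{polverino2010linear}), and your derivation is the standard one underlying that reference, so there is nothing further to reconcile.
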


The next result connects the rank weight of a codeword with the geometry of the associated system via orthogonal complement, see e.g. \cite[Proposition 2.11]{santonastaso2024completely}.

\begin{proposition} \label{prop:characweightgeometricdual}
Let \( \C \) be a nondegenerate $[n,k]_{q^m/q}$ code. Let $G$ be a generator matrix of $\C$, and let \( U \subseteq \F_{q^m}^k \) be the system associated with $\C$, defined as the $\F_q$-span of the columns of \( G \). Then for every \( \xx \in \F_{q^m}^k \), the following equivalences hold:
\begin{enumerate}
    \item $\w(\xx G) = i$;
    \item $\dim_{\F_q}(U \cap \xx^\perp) = n - i$;
    \item $\dim_{\F_q}(U^{\perp'} \cap \langle \xx \rangle_{\F_{q^m}}) = m - i$.
\end{enumerate}
\end{proposition}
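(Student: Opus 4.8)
The plan is to handle the equivalence of (1) and (2) first, since it is an immediate restatement of an earlier result, and then to deduce (3) from (2) by a single application of Proposition~\ref{prop:weightdual}, with all the work reduced to bookkeeping of $\F_q$-dimensions.

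The equivalence (1)~$\Leftrightarrow$~(2) is nothing more than Theorem~\ref{th:connection}. Indeed, that theorem gives $\w(\xx G) = n - \dim_{\F_q}(U \cap \xx^\perp)$, so $\w(\xx G) = i$ holds precisely when $\dim_{\F_q}(U \cap \xx^\perp) = n - i$. Nothing further is required here.

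For (2)~$\Leftrightarrow$~(3) I would assume $\xx \neq \mathbf 0$, so that $\langle \xx \rangle_{\F_{q^m}}$ is a genuine $\F_{q^m}$-line and condition (3) is meaningful, and then apply Proposition~\ref{prop:weightdual} to the pair consisting of the $\F_q$-subspace $U$ and the $\F_{q^m}$-subspace $W := \xx^\perp$. The key point in choosing $W = \xx^\perp$ (rather than $\langle \xx \rangle_{\F_{q^m}}$) is that, since $\sigma$ is nondegenerate, $\perp$ is an involution on $\F_{q^m}$-subspaces, whence $W^\perp = (\xx^\perp)^\perp = \langle \xx \rangle_{\F_{q^m}}$ is exactly the line appearing in (3). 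Proposition~\ref{prop:weightdual} then yields
\[
\dim_{\F_q}\!\left(U^{\perp'} \cap \langle \xx \rangle_{\F_{q^m}}\right) = \dim_{\F_q}(U \cap \xx^\perp) + km - \dim_{\F_q}(U) - \dim_{\F_q}(\xx^\perp).
\]
Substituting $\dim_{\F_q}(U) = n$ (as $U$ is an $[n,k]_{q^m/q}$ system) and $\dim_{\F_q}(\xx^\perp) = (k-1)m$ (as $\xx^\perp$ is an $\F_{q^m}$-hyperplane for $\xx \neq \mathbf 0$), the right-hand side simplifies to $\dim_{\F_q}(U \cap \xx^\perp) + m - n$. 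Hence $\dim_{\F_q}(U \cap \xx^\perp) = n - i$ if and only if $\dim_{\F_q}(U^{\perp'} \cap \langle \xx \rangle_{\F_{q^m}}) = m - i$, which is precisely (2)~$\Leftrightarrow$~(3).

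I do not expect any genuine obstacle: the argument is a chain of two observations, and the only care needed lies in selecting $W = \xx^\perp$ so that $W^\perp$ produces the correct line, and in correctly computing the two $\F_q$-dimensions so that the $km$ terms cancel. The degenerate case $\xx = \mathbf 0$ is excluded from the outset, since then $\langle \xx \rangle_{\F_{q^m}} = \{\mathbf 0\}$ and (3) cannot reconcile with $i = \w(\mathbf 0) = 0$; this causes no loss, as the proposition is only ever invoked for nonzero $\xx$.
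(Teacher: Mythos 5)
Your proof is correct, and it uses exactly the two tools the paper sets up for this purpose (Theorem~\ref{th:connection} for (1)$\Leftrightarrow$(2) and Proposition~\ref{prop:weightdual} applied to $W=\xx^\perp$ for (2)$\Leftrightarrow$(3)); the paper itself gives no proof but cites \cite[Proposition 2.11]{santonastaso2024completely}, and your argument is the standard derivation, with the dimension count $km-n-(k-1)m=m-n$ done correctly. Your observation that $\xx=\mathbf 0$ must be excluded is a fair (and harmless) refinement of the statement as written.
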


\subsection{Proof of Theorem \ref{thm:big_n}}

As for the case $n\leq m$, we will establish Theorem \ref{thm:big_n} by showing that $m-h+1$ is both an upper and a lower bound on $L_{\rk}(n,m,k,q)$.

We start with the following proposition which gives a better estimation of the parameter $a$ with respect to Equation \ref{eq:boundsonaandtfirst}.

\begin{proposition}
\label{prop:m<a leq 2m}
Assume that $n>m$. The parameter $a$ defined as in \eqref{eq:a} satisfies
\[m+1 \leq a \leq 2m.\]
\end{proposition}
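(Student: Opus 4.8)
The plan is to unwind the definitions of $t$ and $a$ and reduce the claim $m+1 \le a \le 2m$ to elementary inequalities relating $n$, $m$, and $k$. Recall that $t = \lfloor (km-n)/m \rfloor = k - \lceil n/m \rceil$ and $a = n - (k-t-2)m$. First I would substitute $t = k - \lceil n/m \rceil$ into the expression for $a$ to eliminate $k$ and $t$ simultaneously. Writing $r := \lceil n/m \rceil$, we have $k - t = r$, so $k - t - 2 = r - 2$, and therefore
\[
a = n - (r-2)m = n - rm + 2m.
\]
This is the key simplification: once $a$ is written as $n - \lceil n/m \rceil m + 2m$, the parameters $k$ and $t$ have disappeared entirely, and the statement becomes a pure fact about the ceiling function applied to $n/m$.

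Next I would bound $n - rm$ using the standard properties of the ceiling. Since $r = \lceil n/m \rceil$, we have $r - 1 < n/m \le r$, which after multiplying by $m$ (positive) gives $(r-1)m < n \le rm$, equivalently
\[
-m < n - rm \le 0.
\]
Adding $2m$ to all three parts yields $m < a \le 2m$. Because $a$ is an integer, $m < a$ is the same as $m+1 \le a$, which gives the desired $m+1 \le a \le 2m$. The only mild subtlety is the strict left inequality $n - rm > -m$: this is precisely the statement $n > (r-1)m = (\lceil n/m\rceil - 1)m$, which holds because $\lceil n/m \rceil - 1 < n/m$ (strict, by definition of the ceiling of a quotient), and this strictness is what upgrades $m \le a$ to $m+1 \le a$ after invoking integrality.

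I expect no serious obstacle here; the statement is essentially a bookkeeping consequence of the definitions, and the hypothesis $n > m$ guarantees $r = \lceil n/m \rceil \ge 2$, which is implicitly what keeps $a$ in the stated range (and is consistent with the earlier bound $0 \le t \le k-2$). The one place to be careful is the direction of the ceiling inequalities and the conversion from a strict inequality between reals to a non-strict one between integers; I would state $(r-1)m < n \le rm$ explicitly and then note that integrality of $a$ promotes $a > m$ to $a \ge m+1$. A final sanity check against \eqref{eq:boundsonaandtfirst} confirms consistency: the cruder bounds there read $n - (k-2)m \le a \le n$, and the refined bounds $m+1 \le a \le 2m$ should sit comfortably inside the feasible range dictated by $n > m$ and $n \le km$.
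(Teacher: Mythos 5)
Your proof is correct and follows essentially the same route as the paper: the paper writes $\mu = km-n = tm + r$ with $r \in \{0,\dots,m-1\}$ and obtains $a = 2m - r$, which is the same division-algorithm computation you perform via the ceiling characterization $(r-1)m < n \le rm$ with $r = \lceil n/m\rceil$. Both arguments reduce the claim to the fact that $a = 2m$ minus the residue of $-n$ modulo $m$, so no further comment is needed.
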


\begin{proof}
First, note that \[a = n - (k-t-2)m = tm - \mu + 2m.\]

Since $t=\floor*{\frac{\mu}{m}}$, we can write $\mu = tm + r$, where $r \in \{0\, \dots, m-1\}$ is the residue of $\mu$ modulo $m$. Then it is clear that $tm - \mu = -r$, which yields
$$a = 2m-r.$$
This completes the proof.
\end{proof}

We provide an explicit construction of a code with exactly $m-h+1$ nonzero weights. This construction will differ according to whether $a$ is greater or lower than $t+2$. In the first case, we will involve again the function $\Psi(u,v)$ of Definition \ref{def:definitionPsi} to define suitably the vector $\bf{n}$ in \eqref{eq:generatormatrixlambdageneral}. In the second case, we will use a different approach.\\

As proven in Theorem \ref{thm:constrn>m} below, this construction will also imply that the value $m-h+1$ is a lower bound on $L_{\rk}(m,n,k,q)$.

\begin{construction}
\label{constr:casen>m}
Assume \( n > m \), and recall that \( a = n - (k - t - 2)m \) and \( t = \left\lfloor \frac{km - n}{m} \right\rfloor \).
First assume that $a \geq t+2$.
Define the set
\[
H := \left\{ i \in \Z_{\geq 0} \colon \frac{a - i}{2^{t+1 - i}} \geq 1 \right\}.
\]
Note that since $a\geq t+2$, we have \( t+1 \in H \), and therefore the set $H$ is nonempty.
Let \( z := \min H \leq t + 1 \).

\medskip
\noindent
\textbf{\underline{Case $a \geq t+2$ and \( z = 0 \).}} Consider the tuple \( \mathbf{n'} := \Psi(a, t+2) = (m_1, \ldots, m_{t+2}) \), where the function \( \Psi \) is defined in Definition~\ref{def:definitionPsi}. Set the tuple
\begin{equation} \label{eq:partitionn>m}
\mathbf{n} = (\underbrace{m, \ldots, m}_{k - t - 2\ \text{times}}, m_1, \ldots, m_{t+2}).
\end{equation}
Note that $m_{t+2} = \lfloor a/2^{t+1} \rfloor \geq 1$,  \( \sum_{i=1}^{t+2} m_i = a \) and
\( n = a + m(k - t - 2) \). Thus, we define the \( [n,k]_{q^m/q} \) code 
\( \mathcal{C}_{\lambda, \mathbf{n}} \), with generator matrix 
\( G_{\lambda, \mathbf{n}} \) as in~\eqref{eq:generatormatrixlambdageneral}, that has the desired length \( n \).

\medskip
\noindent
\textbf{\underline{Case $a \geq t+2$ and \( z > 0 \).}} Let \( a' := a - z \) and \( t' := t - z \). Consider the tuple \( \Psi(a', t'+2) = (m_1, \ldots, m_{t'+2}) \), and define the extended tuple
\[
\mathbf{n} := (\underbrace{m, \ldots, m}_{k - t - 2\ \text{times}}, m_1, \ldots, m_{t'+2}, \underbrace{1, \ldots, 1}_{z\ \text{times}}).
\]
Again, note that $m_{t'+2} = \lfloor \frac{a-z}{2^{t+1-z}} \rfloor \geq 1$ and
that \( n = (k-t-2)m+\sum_{i=1}^{t'+2}m_i+z\). We thus define the \( [n,k]_{q^m/q} \) code 
\( \mathcal{C}_{\lambda, \mathbf{n}} \) with generator matrix 
\( G_{\lambda, \mathbf{n}} \) as in~\eqref{eq:generatormatrixlambdageneral}, that has the desired length $n$. 

\medskip

\medskip
\noindent
\textbf{\underline{Case $a < t+2$.}} If we try to make the previous construction work, our problem will be that $a$ is too small to fill the remaining $t+2$ rows. We solve this problem by taking some of the first $k-t-2$ rows to fill the last rows with $1$'s.

Write $\beta = \lf \frac{n-k}{m-1} \rf$ and $\gamma = (n-k) - (m-1)\beta$. Note that $\gamma$ is the remainder of the division of $n-k$ by $m-1$, hence $\gamma \in \{0, \dots, m-2\}$.

\noindent Consider the tuple
\[
\mathbf{n} := (\underbrace{m, \ldots, m}_{\beta\ \text{times}}, \gamma+1, \underbrace{1, \ldots, 1}_{k-\beta-1\ \text{times}}).
\]
First note that there are $k$ coefficients in this tuple, and they are all between $1$ and $m$. Next, note that
\begin{align*}
\beta m + \gamma+1 + k-\beta - 1 &= \beta (m-1) + \gamma+1 + k - 1 \\
&= n-k + k \\
&= n.
\end{align*}

We then define the \( [n,k]_{q^m/q} \) code 
\( \mathcal{C}_{\lambda, \mathbf{n}} \) with generator matrix 
\( G_{\lambda, \mathbf{n}} \) as in~\eqref{eq:generatormatrixlambdageneral}, that has the desired length $n$. 
\end{construction}

We now prove that the code $\C_{\lambda, \bf{n}}$ defined above has exactly $m-h+1$ distinct weights.

\begin{theorem}
\label{thm:constrn>m}
Let \( n > m \), and define
\[
h = \max\left\{ \left\lfloor \frac{a}{2^{t+1}} \right\rfloor,\, 1 \right\},
\]
where \( a = n - (k - t - 2)m \) and \( t = \left\lfloor \frac{km - n}{m} \right\rfloor \).  
Then, the \( [n,k]_{q^m/q} \) code \( \mathcal{C}_{\lambda, \mathbf{n}} \), defined as in Construction~\ref{constr:casen>m}, has weight spectrum
\[
\mathrm{WS}(\mathcal{C}_{\lambda,\mathbf{n}}) =
\{h, h+1, \ldots, m\}
\]
and thus attains exactly \( m - h + 1 \) distinct nonzero weights.
\end{theorem}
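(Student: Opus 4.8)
The plan is to handle each of the three cases of Construction~\ref{constr:casen>m} in turn, using the geometric dictionary of Theorem~\ref{th:connection} together with the arithmetic identities for $\Psi$ from Proposition~\ref{prop:propertiesofPSI}. In every case the code $\mathcal{C}_{\lambda,\mathbf{n}}$ is block-diagonal, so a general codeword is $\mathbf{c} = (\alpha_1\uu_{\lambda,n_1}, \ldots, \alpha_k\uu_{\lambda,n_k})$ with $\alpha_i \in \F_{q^m}$, and its rank weight is $\dim_{\F_q}\big(\sum_{i=1}^k \alpha_i U_{\lambda,n_i}\big)$, where $U_{\lambda,n_i} = \langle 1,\lambda,\ldots,\lambda^{n_i-1}\rangle_{\F_q}$. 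Since $\alpha_i U_{\lambda,n_i} = \langle \lambda^{j_i}, \ldots, \lambda^{j_i+n_i-1}\rangle_{\F_q}$ for $\alpha_i=\lambda^{j_i}$, the key observation is that by choosing the shifts $j_i$ appropriately we can slide consecutive windows of powers of $\lambda$ so that their union is an interval of powers $\langle 1,\lambda,\ldots,\lambda^{w-1}\rangle_{\F_q}$ of any prescribed length $w$ — exactly the telescoping argument already carried out in Theorem~\ref{thm:constrnleqm}. The essential difference from the case $n \le m$ is that here several blocks have length exactly $m$, and a block of length $m$ already spans all of $\F_{q^m}$; such a block contributes nothing once the weight has reached $m$, so the weight of any codeword is capped at $m$ rather than at $n$.

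\textbf{Case $a \ge t+2$.} Here I would first discard the $k-t-2$ full blocks of length $m$: taking $\alpha_i = \lambda^{j_i}$ on those blocks with $j_i$ large pushes the accumulated span up to the full field, so the realisable weights are governed by the last $t+2$ blocks, whose lengths are $\Psi(a,t+2)$ (adjusted by the $z$ trailing $1$'s when $z>0$). By Proposition~\ref{prop:propertiesofPSI}(i) these lengths sum to $a$, and by part (ii) consecutive partial sums differ by at most $1$, which is precisely the gap-free condition needed to realise every integer weight from the smallest block $m_{t+2}=\lfloor a/2^{t+1}\rfloor$ up to $a$ via the same sliding-window telescoping as in Theorem~\ref{thm:constrnleqm}. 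Once the span of the last $t+2$ blocks reaches a given size $w \le a$, incorporating the $k-t-2$ full blocks lets us raise the total span to $\min\{w + (k-t-2)m,\, m\}$; since $a > m$ by Proposition~\ref{prop:m<a leq 2m}, we can in fact reach the full value $m$. The endpoints of the spectrum therefore become $h = \max\{\lfloor a/2^{t+1}\rfloor,1\}$ at the bottom and $m$ at the top, giving the claimed set $\{h,\ldots,m\}$. The sub-case $z>0$ is dispatched exactly as in Theorem~\ref{thm:constrnleqm}: puncturing to remove the $z$ trailing length-$1$ blocks reduces to the $z=0$ instance (with $a'=a-z$, $t'=t-z$), after which the $z$ singleton blocks supply the extra weights up to $m$.

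\textbf{Case $a < t+2$.} Here $h=1$ (one checks $\lfloor a/2^{t+1}\rfloor = 0$ from $a<t+2\le 2^{t+1}$ for the relevant range), so the target spectrum is the full set $\{1,2,\ldots,m\}$ and I must exhibit a codeword of every weight from $1$ to $m$. With $\mathbf{n}=(\underbrace{m,\ldots,m}_{\beta},\gamma+1,\underbrace{1,\ldots,1}_{k-\beta-1})$, weight $1$ comes from a single length-$1$ block, and I would build up weight incrementally: the $k-\beta-1$ singleton blocks let me reach any weight up to $k-\beta-1+1$ by the same shift trick, then the block of length $\gamma+1$ extends the reachable interval by $\gamma+1$, and finally each length-$m$ block extends it all the way to $m$. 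The arithmetic $\beta(m-1)+\gamma+1+(k-1)=n$ verified in the construction guarantees the block lengths are consistent, and because a full block spans all of $\F_{q^m}$ the cap at $m$ is automatic.

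\textbf{Main obstacle.} I expect the principal difficulty to be bookkeeping the interaction between the length-$m$ blocks and the telescoping argument: one must argue carefully that sliding windows from a block of length $m$ can always fill in \emph{any} remaining gap up to $m$ without overshooting, and that the presence of multiple full blocks does not create weights below $h$ or skip values. This is where the hypothesis $a>m$ (Proposition~\ref{prop:m<a leq 2m}) is crucial, and it is the step I would write out most explicitly; the rest is a faithful adaptation of the $n\le m$ proof, with $n$ replaced by $m$ as the weight ceiling and $a$ playing the role that $n$ played before. I would also double-check the boundary identity $m_{t+2}=\lfloor a/2^{t+1}\rfloor = h$ (and its $z>0$ analogue) to confirm that the bottom of the spectrum matches the claimed value of $h$.
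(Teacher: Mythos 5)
Your treatment of the case $a \ge t+2$ is essentially the paper's: you set the coefficients of the $k-t-2$ full blocks to zero, telescope over the last $t+2$ blocks of lengths $\Psi(a,t+2)$ exactly as in Theorem~\ref{thm:constrnleqm}, and use $a \ge m+1$ (Proposition~\ref{prop:m<a leq 2m}) to conclude that the weights $m_{t+2},\ldots,\min\{a,m\}$ so obtained already cover everything up to the cap $m$; the $z>0$ subcase via puncturing is also the paper's route. (Your aside about ``incorporating the $k-t-2$ full blocks to raise the total span'' is redundant but harmless, since any nonzero coefficient on a length-$m$ block forces weight exactly $m$.)

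The genuine gap is in the case $a<t+2$, and it stems from a misconception you state explicitly in your ``main obstacle'' paragraph: there are no sliding windows from a block of length $m$. Since $\lambda$ generates $\F_{q^m}$ over $\F_q$, one has $U_{\lambda,m}=\F_{q^m}$, hence $\alpha U_{\lambda,m}=\F_{q^m}$ for every $\alpha\neq 0$; a length-$m$ block contributes either nothing or the whole field, so it can never ``extend the reachable interval'' incrementally or ``fill in any remaining gap''. Consequently your plan --- singletons up to about $k-\beta-1$, then $\gamma+1$ more, then ``each length-$m$ block extends it all the way to $m$'' --- leaves every weight strictly between $(k-\beta-1)+(\gamma+1)$ and $m$ unaccounted for, unless one proves that the blocks of length less than $m$ already reach $m$ on their own. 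That is precisely the step the paper supplies and that is absent from your argument: from $a=n-(k-t-2)m<t+2$ together with $m+1\le a$ one derives $m+3\le k-\beta$, hence $k-\beta-1\ge m$, so the $k-\beta-1$ singleton blocks alone (taking coefficients $1,\lambda,\ldots,\lambda^{j-1}$ on $j$ of them and zero elsewhere) realize every weight $j\in\{1,\ldots,m\}$. Without this inequality, your case $a<t+2$ does not close.
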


\begin{proof}

We shall treat the cases presented in Construction \ref{constr:casen>m} in order.  

\medskip
\noindent
\textbf{\underline{Case $a \geq t+2$ and $z=0$.}}

For every \( i = 1, \dots, t+2 \), let \( \mathbf{c}_{m_i} = (\mathbf{0}, \ldots, \uu_{\lambda,m_i}, \ldots, \mathbf{0}) \in \mathcal{C}_{\lambda,\mathbf{n}} \), where $\mathbf{c}_{m_i}$ is written as a vector defined by blocks of length $m, m_1,m_2,\dots,m_{t+2}$ and \( \uu_{\lambda,m_i} \) appears in the block of length $m_i$, for $i=1,\dots, t+2$. Note that \( m_{t+2} \leq m_{t+1} \leq \dots \leq m_1 \leq m \), and the codeword \( \mathbf{c}_{m_{t+2}} = (\mathbf{0}, \ldots, \mathbf{0}, \uu_{\lambda,m_{t+2}}) \in \mathcal{C}_{\lambda,\mathbf{n}} \) has rank weight $$\w(\mathbf{c}_{m_{t+2}}) = \dim_{\F_q}(\langle \uu_{\lambda,m_{t+2}} \rangle_{\F_q}) = \dim_{\F_q}(\langle 1, \lambda, \ldots, \lambda^{m_{t+2} - 1} \rangle_{\F_q}) = m_{t+2},$$
which is the minimum possible.

Similarly to the proof of Theorem \ref{thm:constrnleqm}, we show how to generate codewords covering all weights from \( m_{t+2} \) up to \( m \). For every \( j = 0, \ldots, m_{t+2} \), consider:
\[
\mathbf{c}_{m_{t+2}} + \lambda^j \mathbf{c}_{m_{t+1}} = (\mathbf{0}, \ldots, \lambda^j \uu_{\lambda,m_{t+1}}, \uu_{\lambda,m_{t+2}}) \in \mathcal{C}_{\lambda,\mathbf{n}}.
\]
Then, since $m_{t+2}\leq m_{t+1}$, we have that 
\[
\w(\mathbf{c}_{m_{t+2}} + \lambda^j \mathbf{c}_{m_{t+1}}) = \dim_{\F_q}(\langle 1, \lambda, \ldots,\lambda^{m_{t+1}},\ldots, \lambda^{m_{t+1} + j - 1} \rangle_{\F_q}) = m_{t+1} + j.
\]
By Proposition~\ref{prop:propertiesofPSI}(ii), we know that either
\[
m_{t+1} = m_{t+2} \quad \text{or} \quad m_{t+1} = m_{t+2} + 1.
\]
So, by letting \( j \) vary from \( 0 \) to \( m_{t+2} \), we obtain codewords of weights \(  m_{t+2} + 1, \ldots, m_{t+1} + m_{t+2} \). Again by Proposition~\ref{prop:propertiesofPSI}(ii), we have
\[
\text{ either } m_{t+1} + m_{t+2}=m_{t}  \quad \text{or} \quad m_{t+1} + m_{t+2}= m_{t}-1.
\]
Therefore, we cover all weights up to either $m_{t}-1$ or $m_{t}$.

\noindent Now, for each \( j = 0, \ldots, m_{t+1} + m_{t+2} \), consider:
\[
\mathbf{c}_{m_{t+2}} + \lambda^{m_{t+2}} \mathbf{c}_{m_{t+1}} + \lambda^j \mathbf{c}_{m_{t}} = (\mathbf{0}, \ldots, \lambda^j \uu_{\lambda,m_{t}}, \lambda^{m_{t+2}} \uu_{\lambda,m_{t+1}}, \uu_{\lambda,m_{t+2}}) \in \mathcal{C}_{\lambda,\mathbf{n}}.
\]
Then,
\[
\begin{aligned}
\w(\mathbf{c}_{m_{t+2}} + \lambda^{m_{t+2}} \mathbf{c}_{m_{t+1}} + \lambda^j \mathbf{c}_{m_{t}}) &= \dim_{\F_q}(\langle 1, \lambda, \ldots,\lambda^{m_k-1},\lambda^{m_k}, \dots, \lambda^{m_{t+2}+m_{t+1}-1},\dots, \lambda^j, \dots, \lambda^{m_{t} + j - 1} \rangle_{\F_q})\\ &= m_{t} + j.
\end{aligned}
\]
So, we find codewords of weights \( m_{t}, \ldots, m_{t} + m_{t+1} + m_{t+2} \). Again by Proposition~\ref{prop:propertiesofPSI}(ii), we know that this covers weights up to \( m_{t-1} \) or \( m_{t-1} - 1 \) since \[
\text{ either } m_{t} + m_{t+1} + m_{t+2}=m_{t-1}-1 \text{ or } m_{t} + m_{t+1} + m_{t+2}=m_{t-1}.\]

Proceeding recursively in this way, after a finite number of steps, we obtain codewords of every weight from \( m_{t+2} \) to \( m_2+m_3+\dots+m_{t+2} \), where:
\[
\text{ either } m_2+\dots+m_{t+2} = m_1 - 1 \leq m
\text{ or } m_2+\dots+m_{t+2} = m_1 \leq m.
\]

Finally, for every \( j = 0, \ldots, m_2 + \cdots + m_{t+2} \), consider:
\[
\mathbf{c}_{m_{t+2}} + \lambda^{m_{t+2}} \mathbf{c}_{m_{t+1}} + \lambda^{m_{t+1}} \mathbf{c}_{m_{t}} + \cdots + \lambda^{m_3} \mathbf{c}_{m_2} + \lambda^j \mathbf{c}_{m_1} \in \mathcal{C}_{\lambda, \mathbf{n}}.
\]
Then the weight is:
\[
\begin{aligned}
&\w(\mathbf{c}_{m_{t+2}} + \lambda^{m_{t+2}} \mathbf{c}_{m_{t+1}} + \lambda^{m_{t+1}} \mathbf{c}_{m_{t}} + \cdots + \lambda^{m_3} \mathbf{c}_{m_2} + \lambda^j \mathbf{c}_{m_1})=\\
&=\dim_{\F_q}(\langle 1, \lambda, \ldots,\lambda^{m_{t+2}-1},\lambda^{m_{t+2}},\dots, \lambda^{m_{t+1}-1}, \lambda^{m_{t+1}}, \dots,\lambda^j,\dots, \lambda^{m_1 + j - 1} \rangle_{\F_q}) \\
&=m_1 + j.
\end{aligned}
\]

Therefore, by letting \( j \) vary, we obtain codewords whose weights range from 
\( m_1 \) up to \( m \), with \( m \le a = m_1 + \cdots + m_{t+2} \), as guaranteed by (i) of Proposition \ref{prop:propertiesofPSI}.

\medskip
\noindent
\textbf{\underline{Case $a \geq t+2$ and \( z > 0 \).}} 
We first observe that $m_{t'+2} = \lfloor \frac{a-z}{2^{t+1-z}} \rfloor = 1$. Assume indeed, by contradiction, that
\[
\left\lfloor \frac{a - z}{2^{t +1- z}} \right\rfloor \ge 2 .
\]
Then
\[
\frac{a - z}{2^{t + 1-z}} \ge 2,
\]
which implies
\[
\frac{a - z + 1}{2^{t + 1-z}} \ge 2.
\]
Hence
\[
\frac{a - z + 1}{2^{t +2- z}} \ge 1,
\]
and so
\[
\left\lfloor \frac{a - z + 1}{2^{t +2- z}} \right\rfloor \ge 1.
\]
This shows that \( z - 1 \in H\), where \(H\) is defined in
Construction \ref{constr:casen>m}, contradicting the minimality of \(z\).
\\
Let \(\overline{n} = a' = a - z\) and \(\overline{k} = t' + 2 = t - z + 2\).
By our assumptions, we have \(\overline{n} \ge \overline{k}\), and since
\(a \le 2m\) (see Proposition~\ref{prop:m<a leq 2m}), it follows that
\(\overline{n} < 2m\).
Moreover,
\[
\Psi(\overline{n}, \overline{k}) = \Psi(a', t' + 2)
= (m_1, \ldots, m_{t'+2}),
\]
and we consider the \([\overline{n}, \overline{k}]_{q^m/q}\) code
\[
\overline{\mathcal{C}} =
\left\{
(\alpha_{m_1} \uu_{\lambda,m_1}, \ldots, \alpha_{\lambda,m_{t'+2}} \uu_{\lambda,m_{t'+2}})
\;\middle|\;
\alpha_{m_i} \in \F_{q^m},\ i = 1, \ldots, t' + 2
\right\}.
\]

Assume first that \(\overline{n} > m\).
For the parameters of the code \(\overline{\mathcal{C}}\), we then have
\(\overline{t} = \overline{k} - 2 = t'\) and
\[
\overline{a}
= \overline{n} - (\overline{k} - \overline{t} - 2)m
= \overline{n}=a'.
\]
Therefore, the code \(\overline{\mathcal{C}}\) coincides with the code constructed 
in the previous case and has \(m - \overline{h} + 1\) 
distinct weights, where
\[
\overline{h}
= \left\lfloor \frac{\overline{a}}{2^{\overline{t}+1}} \right\rfloor
= \left\lfloor \frac{a - z}{2^{t - z + 1}} \right\rfloor=m_{t'+2}=1.
\]
Consequently, we obtain $\lvert \mathrm{WS}(\overline{\C})\rvert=m$, and so
\[
\mathrm{WS}(\overline{\mathcal{C}}) = \{1, \ldots, m\}.
\]
Furthermore, every codeword \( \overline{\mathbf{c}} \in \overline{\mathcal{C}} \) corresponds to the codeword \( (\mathbf{0}, \overline{\mathbf{c}}, \mathbf{0}) \in \mathcal{C} \), so we conclude that
\[
\{1, \ldots, m\} = \mathrm{WS}(\overline{\mathcal{C}}) = \mathrm{WS}(\mathcal{C}).
\]

Assume now that \(\overline{n} \le m\).
The \([\overline{n}, \overline{k}]_{q^m/q}\) code
\(\overline{\mathcal{C}}\) is exactly the code obtained from
Construction \ref{constr:casenleqm}, with
\[
z = \min\left\{
i \in \mathbb{Z}_{\ge 0} \;\middle|\;
\frac{\overline{n} - i}{2^{\overline{k} - i - 1}} \ge 1
\right\}
=
\min\left\{
i \in \mathbb{Z}_{\ge 0} \;\middle|\;
\frac{a - z - i}{2^{t-z + 1 - i}} \ge 1
\right\}
= 0.
\]
Therefore, by Theorem \ref{thm:constrnleqm}, we obtain
\[
\mathrm{WS}(\overline{\mathcal{C}})
=
\{\overline{s}, \overline{s}+1, \ldots, \overline{n}\},
\]
where
\[
\overline{s}
=
\left\lfloor \frac{\overline{n}}{2^{\overline{k}-1}} \right\rfloor
=
m_{t'+2}
= 1.
\]
Hence,
$
\lvert \mathrm{WS}(\overline{\mathcal{C}}) \rvert = \overline{n} = a - z,
$
and
$
\mathrm{WS}(\overline{\mathcal{C}}) = \{1, \ldots, a - z\}.$ Again, every codeword
\(\overline{\mathbf{c}} \in \overline{\mathcal{C}}\)
corresponds to the codeword
\((\mathbf{0}, \overline{\mathbf{c}}, \mathbf{0}) \in \mathcal{C}\),
and hence
\[
\{1, \ldots, a - z\}
=
\mathrm{WS}(\overline{\mathcal{C}})
\subseteq
\mathrm{WS}(\mathcal{C}).
\]

It remains to show that there exist codewords in \(\mathcal{C}\) of weights
\(a - z + 1, \ldots, m\).
By construction, there exists
\(\overline{\mathbf{c}}' \in \overline{\mathcal{C}}\) such that
\[
\langle \overline{\mathbf{c}}' \rangle_{\mathbb{F}_q}
=
\langle 1, \lambda, \ldots, \lambda^{a - z - 1} \rangle_{\mathbb{F}_q},
\qquad
\mathrm{w}(\overline{\mathbf{c}}') = a - z.
\]

Now consider the family of codewords of the form
\[
(\mathbf{0}, \overline{\mathbf{c}}', \lambda^{i_1}, \ldots, \lambda^{i_z})
\in \mathcal{C},
\]
for arbitrary non-negative integers \(i_1, \ldots, i_z\).
By choosing the tuples
\begin{align*}
(i_1, \ldots, i_z) &= (a - z, 0, \ldots, 0), \\
(i_1, \ldots, i_z) &= (a - z, a - z + 1, 0, \ldots, 0), \\
&\vdots \\
(i_1, \ldots, i_z) &= (a - z, a - z + 1, \ldots, a - 1, 0), \\
(i_1, \ldots, i_z) &= (a - z, a - z + 1, \ldots, a - 1, a),
\end{align*}
we obtain codewords with weights ranging from \(a - z + 1\) up to
\(\min\{a,m\} = m\). Therefore,
\[
\mathrm{WS}(\mathcal{C}) = \{1, \ldots, m\},
\]
and the assertion follows.

\medskip
\noindent
\textbf{\underline{Case $a < t+2$.}}
This yields $h=1$, so we must show that all nonzero weights can be attained. As shown in Construction \ref{constr:casen>m}, there are $k - \beta -1$ rows of weight $1$. The generator matrix $G_{\lambda, {\bf n}}$ therefore has the form
\begin{align*}
G_{\lambda, \mathbf{n}} &= 
\begin{bmatrix}
\uu_{\lambda, m} & {\bf 0} & {\bf 0} & {\bf 0} & {\bf 0} &{\bf 0} &{\bf 0} \\
{\bf 0} & \ddots & {\bf 0} & {\bf 0} & {\bf 0} & {\bf 0} & {\bf 0} \\
{\bf 0} & {\bf 0} & \uu_{\lambda, m} & {\bf 0} & {\bf 0} & {\bf 0} & {\bf 0} \\
{\bf 0} & {\bf 0} & {\bf 0} & \uu_{\lambda, 1+\gamma} & {\bf 0} & {\bf 0} & {\bf 0} \\
{\bf 0} & {\bf 0} & {\bf 0} & {\bf 0} & \uu_{\lambda, 1} & {\bf 0} & {\bf 0} \\
{\bf 0} & {\bf 0} & {\bf 0} & {\bf 0} & {\bf 0} & \ddots & {\bf 0} \\
{\bf 0} & {\bf 0} & {\bf 0} & {\bf 0} & {\bf 0} & {\bf 0} & \uu_{\lambda, 1} \\
\end{bmatrix} \\
&= \begin{bmatrix}
\uu_{\lambda, m} & {\bf 0} & {\bf 0} & {\bf 0} & {\bf 0} &{\bf 0} &{\bf 0} \\
{\bf 0} & \ddots & {\bf 0} & {\bf 0} & {\bf 0} & {\bf 0} & {\bf 0} \\
{\bf 0} & {\bf 0} & \uu_{\lambda, m} & {\bf 0} & {\bf 0} & {\bf 0} & {\bf 0} \\
{\bf 0} & {\bf 0} & {\bf 0} & \uu_{\lambda, 1+\gamma} & {\bf 0} & {\bf 0} & {\bf 0} \\
{\bf 0} & {\bf 0} & {\bf 0} & {\bf 0} & 1 & {\bf 0} & {\bf 0} \\
{\bf 0} & {\bf 0} & {\bf 0} & {\bf 0} & {\bf 0} & \ddots & {\bf 0} \\
{\bf 0} & {\bf 0} & {\bf 0} & {\bf 0} & {\bf 0} & {\bf 0} & 1 \\
\end{bmatrix}
\end{align*}

Now we prove that there are at least $m$ different coefficients equal to one, or in other words, that $k - \beta - 1 \geq m$. First of all, we have
$$a = n - (k-t-2)m < t+2,$$
which yields
$$m+1 \leq a < t+2 < k - \frac{n-k}{m-1}.$$
Simplifying, we get
$$m+3 \leq  k-\frac{n-k}{m-1},$$
and finally
$$m+3 \leq k - \beta.$$
This yields $m+2 \leq k - \beta -1$, and hence $m \leq k - \beta -1$, as claimed.

\smallskip

For every $j \in \{1, \dots, m\}$, consider the vector ${\bf a}_{j} = (0, \dots, 0, \underbrace{1, \lambda \dots, \lambda^{j-1}}_{j }) \in \F_{q^m}^k$. Then the codeword ${\bf a}_j \cdot G_{\lambda, \mathbf{n}}$ has rank weight exactly $j$. Note in particular that $m \leq k - \beta - 1$, as observed before, so that all possible weights from $1$ to $m$ can be constructed in this way.

\end{proof}

As a consequence of the above result, we get the following lower bound on $L_{\rk}(n,m,k,q)$. 

\begin{corollary}
\label{cor:lowerboundn>m}
Assume that $n > m$. Let $h = \max\left\{\floor*{\frac{a}{2^{t+1}}}, 1\right\}$. Then
\[ L_{\rk}(m, n, k, q) \geq m - h+ 1.\]    
\end{corollary}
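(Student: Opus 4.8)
The plan is to read off this lower bound directly from the construction just analyzed, with essentially no additional work required. Recall that $L_{\rk}(m,n,k,q)$ is defined as the maximum, taken over all $k$-dimensional $\F_{q^m}$-linear rank-metric codes of length $n$, of the number of distinct nonzero rank weights. To produce a lower bound it therefore suffices to exhibit a single admissible code whose weight spectrum attains the claimed size.

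First I would invoke Construction~\ref{constr:casen>m}, which --- for the given parameters $n > m$, $k$, and any generator $\lambda$ of $\F_{q^m}$ over $\F_q$ --- produces an $[n,k]_{q^m/q}$ code $\mathcal{C}_{\lambda,\mathbf{n}}$ of exactly the prescribed length and dimension. By Theorem~\ref{thm:constrn>m}, this code has weight spectrum
\[
\mathrm{WS}(\mathcal{C}_{\lambda,\mathbf{n}}) = \{h, h+1, \ldots, m\},
\]
and hence exactly $m - h + 1$ distinct nonzero weights, where $h = \max\{\lfloor a/2^{t+1}\rfloor,\, 1\}$.

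Since $\mathcal{C}_{\lambda,\mathbf{n}}$ is one admissible code achieving $m - h + 1$ distinct nonzero weights, the maximum $L_{\rk}(m,n,k,q)$ ranging over all such codes cannot be smaller, which yields $L_{\rk}(m,n,k,q) \geq m - h + 1$. There is essentially no obstacle here: the whole substance of the statement was already discharged in the verification of Theorem~\ref{thm:constrn>m}, and the corollary is merely the elementary observation that a maximum dominates any particular value in its range. The one point I would take care to confirm is that the construction is legitimate across the entire regime $m < n \leq km$ --- in particular that the block sizes stay between $1$ and $m$ and sum to $n$ --- but this admissibility is already guaranteed by Proposition~\ref{prop:m<a leq 2m} together with the case split ($a \geq t+2$ versus $a < t+2$) carried out in Construction~\ref{constr:casen>m}.
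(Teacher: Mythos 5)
Your proposal is correct and follows exactly the paper's route: the corollary is presented there as an immediate consequence of Theorem~\ref{thm:constrn>m}, since the code $\mathcal{C}_{\lambda,\mathbf{n}}$ from Construction~\ref{constr:casen>m} exhibits $m-h+1$ distinct nonzero weights and $L_{\rk}$ is a maximum over all admissible codes. Nothing further is needed.
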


In the next, we present some examples of codes defined as in Construction \ref{constr:casen>m} and we show that their weight spectrums have size $m-h+1$.

\begin{example}
Let \( n = 11 \), \( m = 5 \), and \( k = 4 \). In this case, $t=\lfloor\frac{km-n}{m}\rfloor=\lfloor\frac{20-11}{5}\rfloor=1$ and $a=n-(k-t-2)m=11-(4-1-2)5=6\geq t+2=3$. So we have that $h=\lfloor\frac{a}{2^{t+1}}\rfloor=1$ and, by \Cref{thm:constrn>m}, Construction~\ref{constr:casen>m} provides a code with $m-h+1=5$ non-zero weights. In this case, we have \( z = \min\lbrace i\geq0\colon \frac{6-i}{2^{t+1-i}}\geq 1\rbrace=0 \), and Construction~\ref{constr:casenleqm} gives the code \( \C_{\lambda,\mathbf{n}} \) with \( \mathbf{n} = (m, m_1, m_2, m_3) = (5, 3, 2, 1) \), and generator matrix
\[
G_{\lambda, \mathbf{n}} = 
\begin{bmatrix}
\uu_{\lambda, 5} & {\bf 0} & {\bf 0} & {\bf 0} \\
{\bf 0} & \uu_{\lambda, 3} & {\bf 0} & {\bf 0} \\
{\bf 0} & {\bf 0} & \uu_{\lambda, 2} & {\bf 0}\\
{\bf 0} & {\bf 0} & {\bf 0} & \uu_{\lambda, 1}
\end{bmatrix}.
\]
This code has codewords of every weight from 1 to 5. Indeed, following the proof of Theorem \ref{thm:constrn>m}, we have that 
\begin{itemize}
\item \( \mathbf{c}_{m_3}=(0,0,0,0,0,0,0,0,0,0, \uu_{\lambda, 1})=(0,0,0,0,0,0,0,0,0,0,1) \) has weight 1.
    \item $\mathbf{c}_{m_3} + \mathbf{c}_{m_2} = (0,0,0,0,0,0,0,0, \lambda\uu_{\lambda,2}, \uu_{\lambda,1})=(0,0,0,0,0,0,0,0,1,\lambda,1)$ has weight 2.
    \item $\mathbf{c}_{m_3} + \lambda\mathbf{c}_{m_2} = (0,0,0,0,0,0,0,0, \lambda\uu_{\lambda,2}, \uu_{\lambda,1})=(0,0,0,0,0,0,0,\lambda,\lambda^2,1)$ has weight 3.
    \item $\mathbf{c}_{m_3} + \lambda \mathbf{c}_{m_2}+\lambda\mathbf{c}_{m_1}=(0,0,0,0,0,\lambda\uu_{\lambda,3}, \lambda\uu_{\lambda,2}, \uu_{\lambda, 1})=(0,0,0,0,0,\lambda,\lambda^2, \lambda^3,\lambda,\lambda^2,1)$ has weight 4.
    \item $\mathbf{c}_{m}=(\mathbf{u}_{\lambda,5},0,0,0,0,0,0)=(1,\lambda,\lambda^2,\lambda^3,\lambda^4,0,0,0,0,0,0)$ has weight 5.
\end{itemize}
\medskip

\noindent Now, we consider the case \( n = 12 \), \( m = 7 \), and \( k = 6 \). In this case, $t=\lfloor\frac{km-n}{m}\rfloor=\lfloor\frac{42-12}{7}\rfloor=4$ and $a=n-(k-t-2)m=12-(6-4-2)7=12\geq t+2=6$. So we have that $h=\lfloor\frac{a}{2^{t+1}}\rfloor=1$ and, by \Cref{thm:constrn>m}, Construction~\ref{constr:casen>m} provides a code with $m-h+1=5$ non-zero weights. In this case, we have \( z = \min\lbrace i\geq0\colon \frac{12-i}{2^{t+1-i}}\geq 1\rbrace=2>0\), and Construction~\ref{constr:casen>m} gives the code \( \C_{\lambda,\mathbf{n}} \) with \( \mathbf{n} = (m_1, m_2, m_3, m_4,1,1) = (5,3,1,1,1,1) \), and generator matrix
\[
G_{\lambda, \mathbf{n}} = 
\begin{bmatrix}
\uu_{\lambda, 5} & {\bf 0} & {\bf 0} & {\bf 0} & {\bf 0} & {\bf 0}\\
{\bf 0} & \uu_{\lambda, 3} & {\bf 0} & {\bf 0} &{\bf 0} & {\bf 0}\\
{\bf 0} & {\bf 0} & \uu_{\lambda, 1} & {\bf 0} & {\bf 0}& {\bf 0}\\
{\bf 0} & {\bf 0} & {\bf 0} & \uu_{\lambda, 1} & {\bf 0}& {\bf 0}\\
{\bf 0} & {\bf 0} & {\bf 0}  & {\bf 0}& \uu_{\lambda, 1}& {\bf 0}\\
{\bf 0} & {\bf 0} & {\bf 0}  & {\bf 0} & {\bf 0}& \uu_{\lambda, 1}
\end{bmatrix}.
\]
This code has codewords of every weight from 1 to 7. Indeed, following the proof of Theorem \ref{thm:constrn>m}, consider the punctured code of $\mathcal{C}$, defined as
\[
\mathcal{C}'=\lbrace (\alpha_{m_1}{\bf u}_{m_1}, \alpha_{m_2}{\bf u}_{m_2}, \alpha_{m_3}{\bf u}_{m_3},\alpha_{m_4}{\bf u}_{m_4})\mid \alpha_{m_i}\in\F_{q^m},i=1,2,3,4\rbrace
\]
Then
\begin{itemize}
\item \( \mathbf{c}'_1=\mathbf{c}'_{m_4}=(0,0,0,0,0,0,0,0,0, \uu_{\lambda, 1})=(0,0,0,0,0,0,0,0,0,1) \) has weight 1.
    \item $\mathbf{c}'_2=\mathbf{c}'_{m_4} + \lambda\mathbf{c}'_{m_3} = (0,0,0,0,0,0,0,0, \lambda\uu_{\lambda,1}, \uu_{\lambda,1})=(0,0,0,0,0,0,0,0,\lambda,1)$ has weight 2.
    \item $\mathbf{c}'_3=\mathbf{c}'_{m_4} + \lambda\mathbf{c}'_{m_3}+ \mathbf{c}'_{m_2}= (0,0,0,0,0,0,\uu_{\lambda,3}, \lambda\uu_{\lambda,1}, \uu_{\lambda,1})=(0,0,0,0,0,1,\lambda,\lambda^2,\lambda,1)$ has weight 3.
    \item $\mathbf{c}'_4=\mathbf{c}'_{m_4} + \lambda \mathbf{c}'_{m_3}+\lambda\mathbf{c}'_{m_2}+\mathbf{c}'_{m_1}=(0,0,0,0,0,\lambda\uu_{\lambda,3}, \lambda\uu_{\lambda,1}, \uu_{\lambda, 1})=(0,0,0,0,0,\lambda,\lambda^2, \lambda^3,\lambda,1)$ has weight 4.
    \item $\mathbf{c}'_5=\mathbf{c}'_{m_1}=(\mathbf{u}_{\lambda,5},0,0,0,0,0)=(1,\lambda,\lambda^2,\lambda^3,\lambda^4,0,0,0,0,0)$ has weight 5.
\end{itemize}
Then the codewords $\mathbf{c}_j=(\mathbf{c}'_j,0,0)\in\mathcal{C}$ for every $j=1,\dots,5$ and so $\mathcal{C}$ has codewords of weight from 1 to 5. Finally, consider $\mathbf{c}_6=(\mathbf{c}_5,\lambda^5,1)\in\mathcal{C}$ which has weight 6 and $\mathbf{c}_7=(\mathbf{c}_5,\lambda^5,\lambda^6)$ which has weight 7. Thus $\mathcal{C}$ is an $[12,6]_{q^7/q}$-code with 7 non-zero weights.
\medskip

\noindent Finally, consider $n=12$, $k=10$ and $m=3$. In this case, $t=\left\lfloor\frac{km-n}{m}\right\rfloor=\left\lfloor\frac{30-12}{3}\right\rfloor=6$ and $a=n-(k-t-2)m=12-(10-6-2)3=6<t+2=8$.  Following Construction \ref{constr:casen>m}, we have $\beta=\lfloor\frac{n-k}{m-1}\rfloor=1$ and $\gamma=n-k-\beta(m-1)=0$, so we consider  $\textbf{n}=(\underbrace{3}_{\beta=1\text{ time}},\underbrace{1}_{\gamma+1},\underbrace{1,1,1,1,1,1,1,1}_{k-\beta-1=8 \text{ times }})$. Thus Construction \ref{constr:casen>m} gives the code $\mathcal{C}_{\lambda,\mathbf{n}}$ with generator matrix
\[
G_{\lambda, \mathbf{n}} = 
\begin{bmatrix}
\uu_{\lambda, 3} & {\bf 0} & {\bf 0} & {\bf 0} & {\bf 0} & {\bf 0}& {\bf 0}& {\bf 0}\\
{\bf 0} & \uu_{\lambda, 1} & {\bf 0} & {\bf 0} &{\bf 0} & {\bf 0}& {\bf 0}& {\bf 0}\\
{\bf 0} & {\bf 0} & \uu_{\lambda, 1} & {\bf 0} & {\bf 0}& {\bf 0}& {\bf 0}& {\bf 0}\\
{\bf 0} & {\bf 0} & {\bf 0} & \uu_{\lambda, 1} & {\bf 0}& {\bf 0}& {\bf 0}& {\bf 0}\\
{\bf 0} & {\bf 0} & {\bf 0}  & {\bf 0}& \uu_{\lambda, 1}& {\bf 0}& {\bf 0}& {\bf 0}\\
{\bf 0} & {\bf 0} & {\bf 0}  & {\bf 0} & {\bf 0}& \uu_{\lambda, 1}& {\bf 0}& {\bf 0}\\
{\bf 0} & {\bf 0} & {\bf 0}  & {\bf 0} & {\bf 0}& {\bf 0} & \uu_{\lambda, 1}& {\bf 0}\\
{\bf 0} & {\bf 0} & {\bf 0}  & {\bf 0} & {\bf 0}& {\bf 0}& {\bf 0}& \uu_{\lambda, 1}
\end{bmatrix}.
\]
This code has $m-h+1=3$ non-zero weights. Indeed, consider ${\mathbf{a}_1}=(0,0,0,0,0,0,0,0,0,0,0,1)$, ${\mathbf{a}_2}=(0,0,0,0,0,0,0,0,0,0,\lambda,1)$, ${\mathbf{a}_3}=(0,0,0,0,0,0,0,0,0,\lambda^2,\lambda,1)$. Then
\begin{itemize}
    \item $\mathbf{c}_1={\bf a}_1G_{\lambda, {\bf n}}=(0,0,0,0,0,0,0,0,0,0,0,\mathbf{u}_{\lambda,1})=(0,0,0,0,0,0,0,0,0,0,0,1)$ has weight 1.
    \item $\mathbf{c}_2={\bf a}_2G_{\lambda, {\bf n}}=(0,0,0,0,0,0,0,0,0,0,\lambda\mathbf{u}_{\lambda,1},\mathbf{u}_{\lambda,1})=(0,0,0,0,0,0,0,0,0,0,\lambda,1)$ has weight 2.
    \item $\mathbf{c}_3={\bf a}_3G_{\lambda, {\bf n}}=(0,0,0,0,0,0,0,0,0,\lambda^2\mathbf{u}_{\lambda,1},\lambda\mathbf{u}_{\lambda,1},\mathbf{u}_{\lambda,1})=(0,0,0,0,0,0,0,0,0,\lambda^2,\lambda,1)$ has weight 3.
\end{itemize}
Thus $\mathcal{C}$ is an $[12,10]_{q^3/q}$-code with 3 non-zero weights.
\end{example}

Now, to conclude the proof of Theorem \ref{thm:big_n}, we will show that $m-h+1$ is also an upper bound on $L_{\rk}(m,n,k,q)$.

Note that for $h=1$, i.e. $a<2^{t+2}$, we already know that $L_{\rk}(n,m,k,q)\leq m$.

To address the case where \( h > 1 \), which corresponds to \( a \geq 2^{t+2} \), we employ a combinatorial lemma in the same spirit as Lemma \ref{lem:combinatoriallemma}, together with the geometric description of rank-metric codes.

\begin{lemma} \label{lem:combinatoriallemman>m}
Let $h =\max\left\{ \floor*{\frac{a}{2^{t+1}}},1\right\}>1$. Let $S$ be a subset of $\{1, \dots, m\}$ such that $|S| \geq m-h+2$. Then there exist $t+2$ elements $s_{0}< \cdots< s_{t+1} \in S$ such that
\begin{equation} \label{eq:conditionsin>m}
s_0+(2^{j-1}-1)h+1 \leq s_j \leq 2^{j-1}h
\quad \mbox{ and }\quad s_{j} > \sum_{i=0}^{j-1} s_{i},\end{equation}
for every $1\leq j \leq t+1$.
\end{lemma}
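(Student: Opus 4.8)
The plan is to follow the same greedy-selection strategy used in the proof of Lemma~\ref{lem:combinatoriallemma}, now working inside $\{1,\dots,m\}$ with $h$ playing the role that $s$ played there, and extracting $t+2$ elements instead of $k+1$. I would set $s_0 := \min S$ and use that the hypothesis $|S|\ge m-h+2$ forces the complement $\{1,\dots,m\}\setminus S$ to contain at most $h-2$ elements. Since the whole initial block $\{1,\dots,s_0-1\}$ lies outside $S$, this already accounts for $s_0-1$ missing elements; in particular $s_0-1\le h-2$, so $s_0\le h-1$, and at most $(h-2)-(s_0-1)=h-s_0-1$ of the missing elements can lie in $\{s_0+1,\dots,m\}$.

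Next I would introduce the dyadic intervals
\[
I_j := \{\, s_0 + (2^{j-1}-1)h + 1,\ \dots,\ 2^{j-1}h \,\}, \qquad j=1,\dots,t+1,
\]
chosen precisely so that the two-sided constraint $s_0+(2^{j-1}-1)h+1\le s_j\le 2^{j-1}h$ appearing in~\eqref{eq:conditionsin>m} becomes exactly the membership $s_j\in I_j$. A direct computation gives $|I_j| = h - s_0\ge 1$ for every $j$, and consecutive intervals are separated, since the gap between the top of $I_j$ and the bottom of $I_{j+1}$ equals $s_0+(2^{j-1}-1)h+1>0$. The essential range check is that all intervals lie inside $\{s_0+1,\dots,m\}$: the bottom of $I_1$ is $s_0+1$, while the top of $I_{t+1}$ is $2^t h$. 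Here I would invoke Proposition~\ref{prop:m<a leq 2m}: from $h\le a/2^{t+1}$ we get $2^t h\le a/2\le m$ because $a\le 2m$, so $I_{t+1}\subseteq\{1,\dots,m\}$.

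With the geometry in place the selection is immediate. Each $I_j$ is contained in $\{s_0+1,\dots,m\}$, which misses at most $h-s_0-1$ elements of $S$; since $|I_j|=h-s_0>h-s_0-1$, every $I_j$ must contain at least one element of $S$, and I pick $s_j\in S\cap I_j$. The chain $s_0<s_1<\dots<s_{t+1}$ follows from the monotonicity and separation of the intervals, and the first inequality in~\eqref{eq:conditionsin>m} holds by construction. For the second inequality I would use the upper endpoints: the bounds $s_i\le 2^{i-1}h$ for $1\le i\le j-1$ give $\sum_{i=1}^{j-1}s_i\le (2^{j-1}-1)h$, whence
\[
\sum_{i=0}^{j-1}s_i\le s_0+(2^{j-1}-1)h< s_0+(2^{j-1}-1)h+1\le s_j.
\]

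The only genuinely new ingredient compared to Lemma~\ref{lem:combinatoriallemma} is the two-sided constraint in~\eqref{eq:conditionsin>m}, which is what pins down the precise dyadic form of the $I_j$ and makes the upper-endpoint check indispensable; this is where Proposition~\ref{prop:m<a leq 2m} does the real work, guaranteeing $2^t h\le m$ so that the last interval does not spill out of $\{1,\dots,m\}$. I expect this endpoint verification, together with confirming that the intervals remain nonempty (i.e. $s_0\le h-1$, which the counting forces), to be the main points needing care, as the remainder of the argument transfers essentially verbatim from the earlier combinatorial lemma.
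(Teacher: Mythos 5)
Your proposal is correct and follows essentially the same route as the paper: the paper's proof also sets $s_0=\min S$, uses the intervals $I_j = \{s_0+(2^{j-1}-1)h+1,\dots,2^{j-1}h\}$ inherited from Lemma~\ref{lem:combinatoriallemma}, and justifies the range of $j$ via $2^t h \le 2^t\lfloor a/2^{t+1}\rfloor \le m$ using $a\le 2m$ from Proposition~\ref{prop:m<a leq 2m}. If anything, you spell out the counting, the interval separation, and the upper-endpoint verification of $s_j>\sum_{i=0}^{j-1}s_i$ more explicitly than the paper, which simply defers to the earlier lemma.
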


\begin{proof}
The proof proceeds in a similar fashion to that of Lemma \ref{lem:combinatoriallemma}. Let \( s_0 := \min S \). Then, for each \( j = 1, \dots, t+1 \), the element \( s_j \) lies in the interval
\[
I_j = \left\{ s_0 + (2^{j-1} - 1)h + 1, \dots, 2^{j-1}h \right\}.
\]
Note that \( j = t+1 \) is the largest index for which \( 2^{j-1}h \leq m \) still holds, indeed
\[
2^t h \leq 2^t \floor*{\frac{a}{2^{t+1}}} \leq 2^t \frac{2m}{2^{t+1}}  = m,
\]
where the last inequality uses the fact that \( a \leq 2m \), as stated in Proposition~\ref{prop:m<a leq 2m}.
\end{proof}

\begin{theorem} \label{th:boundcasen>m}
Assume $n>m$. Let $h=\max\left\{\floor*{\frac{a}{2^{t+1}}},1\right\}$. Then
\[
L_{\rk}(m,n,k,q)\leq m-h+1.
\]
\end{theorem}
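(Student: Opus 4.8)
The plan is to mirror the upper-bound argument of Theorem~\ref{th:upperboundn<m}, while compensating for the fact that Lemma~\ref{lem:combinatoriallemman>m} now produces only $t+2$ codewords (instead of $k+1$) by passing to the dual/geometric description of $\C$. When $h=1$ the claim is just the trivial bound $L_{\rk}(m,n,k,q)\le m$, so I would assume $h>1$ and argue by contradiction: suppose there is a nondegenerate $[n,k]_{q^m/q}$ code $\C$ (we may restrict to nondegenerate codes, as noted at the start of the section) whose weight spectrum $S=\mathrm{WS}(\C)$ satisfies $|S|\ge m-h+2$. I fix a generator matrix $G$, let $U\subseteq\F_{q^m}^k$ be the associated system, and set $W:=U^{\perp'}$, so that $\dim_{\F_q}(W)=km-n=\mu$. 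By Proposition~\ref{prop:characweightgeometricdual}, a value $i$ lies in $S$ exactly when there is $\xx\in\F_{q^m}^k$ with $\dim_{\F_q}\!\bigl(W\cap\langle\xx\rangle_{\F_{q^m}}\bigr)=m-i$; this is the translation that converts a statement about weights into one about $\F_q$-dimensions inside the fixed $\mu$-dimensional space $W$.

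Next I would extract the combinatorial data and reassemble it geometrically. Applying Lemma~\ref{lem:combinatoriallemman>m} to $S$ yields $s_0<\cdots<s_{t+1}\in S$ with $s_j\le 2^{j-1}h$ for $1\le j\le t+1$ and $s_j>\sum_{i=0}^{j-1}s_i$ for all $j$. I choose codewords $\cc_j=\xx_jG$ with $\w(\cc_j)=s_j$. Exactly as in the proof of Theorem~\ref{th:upperboundn<m}, subadditivity of the rank weight together with $s_j>\sum_{i<j}s_i$ forces $\cc_0,\dots,\cc_{t+1}$ to be $\F_{q^m}$-linearly independent; since $G$ induces an isomorphism onto $\C$, the vectors $\xx_0,\dots,\xx_{t+1}$ are $\F_{q^m}$-linearly independent as well. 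Hence the points $P_j:=\langle\xx_j\rangle_{\F_{q^m}}$ are distinct and their sum $T:=P_0\oplus\cdots\oplus P_{t+1}$ is an $\F_{q^m}$-subspace with $\dim_{\F_{q^m}}T=t+2$. Because the $P_j$ meet pairwise trivially, the subspaces $W\cap P_j$ lie in direct sum inside $W\cap T$, and Proposition~\ref{prop:characweightgeometricdual} gives $\dim_{\F_q}(W\cap P_j)=m-s_j$, so
\[
\dim_{\F_q}(W\cap T)\ \ge\ \sum_{j=0}^{t+1}(m-s_j)\ =\ (t+2)m-\sum_{j=0}^{t+1}s_j .
\]

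Finally I would close the argument with a dimension count. Since $s_1\le h$ and $s_0<s_1$ force $s_0\le h-1$, the bounds $s_j\le 2^{j-1}h$ give $\sum_{j=0}^{t+1}s_j\le (h-1)+(2^{t+1}-1)h=2^{t+1}h-1\le a-1$, using $2^{t+1}h\le a$. As $(t+2)m-a=km-n=\mu$ by the definition of $a$ in~\eqref{eq:a}, this yields $\dim_{\F_q}(W\cap T)\ge\mu+1$, contradicting $\dim_{\F_q}(W\cap T)\le\dim_{\F_q}(W)=\mu$. Hence $|S|\le m-h+1$, which is the asserted bound. The step I expect to be the crux is the geometric one: recognizing that $\F_{q^m}$-independence of the $\xx_j$ makes the local intersections $W\cap P_j$ independent inside $W$, so that their $\F_q$-dimensions add. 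Once this is in place the arithmetic is essentially forced, the single unit of slack coming precisely from $s_0\le h-1$, which is exactly what upgrades the estimate into a strict contradiction.
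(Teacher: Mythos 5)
Your proposal is correct and follows essentially the same route as the paper's proof: the same combinatorial extraction via Lemma~\ref{lem:combinatoriallemman>m}, the same linear-independence argument inherited from Theorem~\ref{th:upperboundn<m}, the same passage to the dual system $U^{\perp'}$ via Proposition~\ref{prop:characweightgeometricdual}, and the same direct-sum dimension count yielding $a\le 2^{t+1}h-1\le a-1$. The only cosmetic difference is that you bound $\dim_{\F_q}(W\cap T)$ against $\dim_{\F_q}(W)=\mu$ rather than summing the intersection dimensions directly against $km-n$, which is the identical inequality.
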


\begin{proof}
The case \( h = 1 \) is trivial. So suppose \( h > 1 \), which implies \( a \geq 2^{t+2} \). Assume, by contradiction, that
\[
L_{\rk}(m, n, k, q) \geq m - h + 2.
\]
Then, there exists an $[n,k]_{q^m/q}$ code $\C$ such that \( |\mathrm{WS}(\C)| \geq m - h + 2 \). By Lemma~\ref{lem:combinatoriallemman>m}, there exist \( t+2 \) distinct codewords \( \mathbf{c}_0, \ldots, \mathbf{c}_{t+1} \) such that, setting \( s_i = \w(\mathbf{c}_i) \) for \( i = 0, \ldots, t+1 \), the inequalities in \eqref{eq:conditionsin>m} are satisfied.

Moreover, since \( s_j > \sum_{i=0}^{j-1} s_i \) for every \( j = 1, \ldots, t+1 \), by using the same argument of the proof of Theorem \ref{th:upperboundn<m}, we have that the codewords \( \mathbf{c}_0, \ldots, \mathbf{c}_{t+1} \) must be \( \F_{q^m} \)-linearly independent. Let \( G \in \F_{q^m}^{k \times n} \) be a generator matrix of \( \C \), and let \( \mathbf{x}_0, \ldots, \mathbf{x}_{t+1} \in \F_{q^m}^k \) be such that \( \mathbf{c}_i = \mathbf{x}_i G \) for all \( i=1,\dots, t+1 \). \\
Let \( U \) be the $[n,k]_{q^m/q}$ system associated with \( \C \), and \( U^{\perp'} \) its geometric dual. Recall that
\[
\dim_{\F_q}(U^{\perp'}) = km - \dim_{\F_q}(U).
\]
By Proposition~\ref{prop:characweightgeometricdual}, we have
\[
\dim_{\F_q}(U^{\perp'} \cap \langle \mathbf{x}_i \rangle_{\F_{q^m}}) = m - s_i, \quad \text{for all } i = 0, \ldots, t+1.
\]
The codewords \( \mathbf{c}_0, \ldots, \mathbf{c}_{t+1} \) are \( \F_{q^m} \)-linearly independent; hence, the vectors \( \mathbf{x}_0, \ldots, \mathbf{x}_{t+1} \) are also \( \F_{q^m} \)-linearly independent. Therefore, for each \( j \in \{0, \ldots, t+1\} \), we have
\[
(U^{\perp'} \cap \langle \mathbf{x}_j \rangle_{\F_{q^m}}) \cap \bigoplus_{\substack{i=0 \\ i \neq j}}^{t+1} \left(U^{\perp'} \cap \langle \mathbf{x}_i \rangle_{\F_{q^m}} \right) = \{\mathbf{0}\}.
\]
This implies that
\[
\sum_{i=0}^{t+1} \dim_{\F_q}\left(U^{\perp'} \cap \langle \mathbf{x}_i \rangle_{\F_{q^m}} \right) \leq \dim_{\F_q}(U^{\perp'}) = km - n.
\]
Using the inequalities from \eqref{eq:conditionsin>m} and the fact that \( h \geq s_1 > s_0 \), we obtain $m-s_0\geq m-h+1$ and $m-s_i \geq m-2^{i-1}h$, for every $1 \leq i \leq t+1$. As a consequence, we get
\[
m - h + 1 + \sum_{i=0}^{t}(m - 2^i h) \leq km - n,
\]
which simplifies to
\[
(t+2)m + 1 - 2^{t+1}h \leq km - n,
\]
and hence
\[
n - (k - t - 2)m \leq 2^{t+1} h - 1,
\]
i.e.
\[
a \leq 2^{t+1} h - 1 = 2^{t+1} \left\lfloor \frac{a}{2^{t+1}} \right\rfloor - 1 \leq a - 1,
\]
which is a contradiction. The claim follows.
\end{proof}

Theorem~\ref{thm:big_n} now follows from \Cref{thm:constrn>m} and \Cref{th:boundcasen>m}. In addition, we also obtain a complete characterization of the values of \( n, m, k \) for which FWS codes exist in the case \( n > m \).

\begin{proposition} \label{prop:characterFWSn>m}
Let \( n > m \), and let $t = \floor*{\frac{km-n}{m}}$ and \( a = n - (k - t - 2)m \). A nondegenerate FWS \( [n, k]_{q^m/q} \) code exists if and only if \( a < 2^{t+2} \). In this case, the code \( \mathcal{C}_{\lambda, \mathbf{n}} \) defined as in Construction \ref{constr:casen>m} is an FWS code.
\end{proposition}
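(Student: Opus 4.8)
The plan is to obtain this statement as a direct corollary of Theorem~\ref{thm:big_n}. Since we are in the regime $n > m$, an FWS code has by definition weight spectrum $\mathrm{WS}(\mathcal{C}) = \{1, 2, \ldots, \min\{m,n\}\} = \{1, \ldots, m\}$, a set of cardinality $m$. Consequently, a nondegenerate FWS $[n,k]_{q^m/q}$ code exists if and only if the maximum number of distinct nonzero weights attainable equals $m$, that is, if and only if $L_{\rk}(n, m, k, q) = m$. The whole argument thus reduces to translating the equality $L_{\rk}(n,m,k,q)=m$ into the stated inequality on $a$.

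First I would invoke Theorem~\ref{thm:big_n}, which asserts $L_{\rk}(n, m, k, q) = m - h + 1$ with $h = \max\left\{\left\lfloor a/2^{t+1}\right\rfloor,\, 1\right\}$. Hence $L_{\rk}(n, m, k, q) = m$ holds precisely when $h = 1$. It then remains to rewrite the condition $h = 1$ as an inequality on $a$: because the second argument of the maximum is always $1$, we have $h = 1$ if and only if $\lfloor a/2^{t+1}\rfloor \leq 1$, which is in turn equivalent to $a/2^{t+1} < 2$, i.e. $a < 2^{t+2}$. This elementary floor manipulation is the only computational ingredient, and it presents no real difficulty.

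For the final assertion, I would argue that when $a < 2^{t+2}$, and hence $h = 1$, Theorem~\ref{thm:constrn>m} guarantees that the code $\mathcal{C}_{\lambda,\mathbf{n}}$ built in Construction~\ref{constr:casen>m} has weight spectrum $\{h, h+1, \ldots, m\} = \{1, 2, \ldots, m\}$, so it is indeed an FWS code; moreover it is nondegenerate, since by the block-diagonal form of $G_{\lambda,\mathbf{n}}$ the $\F_q$-linear column span decomposes as a direct sum of the spaces $U_{\lambda,n_i}$, whose dimensions $n_i \leq m$ add up to $n$. There is essentially no obstacle here: the substance lies entirely in Theorem~\ref{thm:big_n}, whose two halves (Theorem~\ref{thm:constrn>m} and Theorem~\ref{th:boundcasen>m}) are already established, and the present proposition is merely its reformulation in FWS language through the single equivalence $h = 1 \Leftrightarrow a < 2^{t+2}$.
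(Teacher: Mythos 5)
Your proof is correct and follows exactly the route the paper intends: the paper states this proposition without a separate proof, presenting it as an immediate consequence of Theorem~\ref{thm:big_n} (equivalently of Theorems~\ref{thm:constrn>m} and~\ref{th:boundcasen>m}), and your argument simply spells out the translation $L_{\rk}(n,m,k,q)=m \Leftrightarrow h=1 \Leftrightarrow a<2^{t+2}$ together with the observation that, since all weights lie in $\{1,\ldots,m\}$ when $n>m$, attaining $m$ distinct weights forces the spectrum to be $\{1,\ldots,m\}$. The added check of nondegeneracy of $\mathcal{C}_{\lambda,\mathbf{n}}$ via the block-diagonal column span is also correct.
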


We now present an illustrative example of an FWS $\nkqm$ code for \( n > m \), that is, a code whose weight spectrum is precisely \( \{1, \ldots, m\} \).

\begin{example}
Let $n=10$, $m = 6$, and $k=4$. Then we have $\mu = km-n = 24-10 = 14$, $t = \floor*{\mu/m} = 2$, and $a = n - (k-t-2)m = 10<2^{t+2}=16$. By Proposition \ref{prop:characterFWSn>m}, we know that a full weight spectrum (FWS) code exists, and Construction~\ref{constr:casen>m} provides an explicit construction. In this case, we have $z=0$ and Construction \ref{constr:casen>m} gives the code $\C_{\lambda,\mathbf{n}}$ with $\mathbf{n}=(m_1,m_2,m_3,m_4)=(5,3,1,1)$ and generator matrix \[G_{\lambda,\mathbf{n}}=\begin{bmatrix}
\uu_{\lambda, 5} & 0 & 0 & 0 \\
0 & \uu_{\lambda, 3} & 0 & 0 \\
0 & 0 & \uu_{\lambda, 1} & 0 \\
0 & 0 & 0 & \uu_{\lambda, 1} \\
\end{bmatrix}.\] This code has codewords of every weight from $1$ to $6$. Therefore, $\C_{\lambda,\mathbf{n}}$ is an FWS $[10,4]_{q^6/q}$ code.\\
Now, consider the case $n=7$, $k=6$ and $m=3$. By Proposition \ref{prop:characterFWSn>m}, we know that a full weight spectrum (FWS) code exists and Construction \ref{thm:constrn>m} provides an explicit construction. In this case, $\mu=km-n=11$, $t=\lfloor\frac{\mu}{m}\rfloor=3$ and $a=n-(k-t-2)m=4<t+2$. Following Construction \ref{constr:casen>m}, we have $\beta=\lfloor\frac{n-k}{m-1}\rfloor=0$ and $\gamma=n-k-\beta(m-1)=1$, so we consider $\textbf{n}=(\underbrace{2}_{\gamma+1},\underbrace{1,1,1,1,1}_{k-\beta-1\, \text{times}})$. Thus Construction \ref{thm:constrn>m} gives the code $\mathcal{C}_{\lambda,\textbf{n}}$ with generator matrix
\[
G_{\lambda, \mathbf{n}} = 
\begin{bmatrix}
\uu_{\lambda, 2} & {\bf 0} & {\bf 0} & {\bf 0} & {\bf 0}& {\bf 0} \\
{\bf 0} & \uu_{\lambda, 1} & {\bf 0} & {\bf 0} &{\bf 0}& {\bf 0} \\
{\bf 0} & {\bf 0} & \uu_{\lambda, 1} & {\bf 0} & {\bf 0}& {\bf 0}\\
{\bf 0} & {\bf 0} & {\bf 0} & \uu_{\lambda, 1} & {\bf 0}& {\bf 0}\\
{\bf 0} & {\bf 0} & {\bf 0}  & {\bf 0}& \uu_{\lambda, 1}& {\bf 0}\\
{\bf 0} & {\bf 0} & {\bf 0}  & {\bf 0}& {\bf 0} & \uu_{\lambda, 1}
\end{bmatrix}.
\]
This code has $m-h+1=3$ non-zero weights. Thus, $\mathcal{C}_{\lambda,\textbf{n}}$ is an FWS $[7,6]_{q^3/q}$-code.

$\hfill \lozenge$
\end{example}

In summary, in the regime \( n \leq m \), by \Cref{thm:small_n}, we know that the function \( L_{\mathrm{rk}}(n, m, k, q) \) grows linearly with \( n \). However, for \( n > m \), \Cref{thm:big_n} establishes that its behavior becomes significantly more intricate. In particular, the function does not decrease uniformly as \( n \) increases; rather, its values depend heavily on the ratio \( n/m \). As \( n \) approaches the extremal bound \( km \), the maximum number of distinct nonzero weights tends to 1. In this case, the corresponding codes are rank-metric simplex codes. Also, note that the value of the function $L_{\mathrm{rk}}(n, m, k, q) $ does not depend on $q$.

To illustrate these behaviors, we plot the function \( L_{\mathrm{rk}}(n, m, k, q) \) for selected values of \( n, m, k, q \) as illustrative examples.

\pgfmathsetmacro{\m}{7}
\pgfmathsetmacro{\k}{3}

\newcommand{\coords}{}
\foreach \n in {1,...,21} {
  \pgfmathtruncatemacro{\cond}{\n <= \m}
  \ifnum\cond=1
    \pgfmathtruncatemacro{\s}{max(floor(\n / (2^(\k - 1))), 1)}
    \pgfmathtruncatemacro{\Lval}{\n - \s + 1}
  \else
    \pgfmathtruncatemacro{\t}{floor(\k - \n / \m)}
    \pgfmathtruncatemacro{\a}{\n - (\k - \t - 2)*\m}
    \pgfmathtruncatemacro{\s}{max(floor(\a / (2^(\t + 1))), 1)}
    \pgfmathtruncatemacro{\Lval}{\m - \s + 1}
  \fi
  \xdef\coords{\coords (\n,\Lval)}
}

\begin{center}
\begin{tikzpicture} 
  \begin{axis}[
    width=16cm,
    height=9cm,
    xlabel={$n$},
    ylabel={$L_{\rk}(n, m, k,q)$},
    grid=major,
    thick,
    legend pos=north east,
    title={Plot of $L_{\rk}(n, m, k,q)$ for $m=7$, $k=3$},
    xtick={1,...,21},
    ytick={0,...,7},
    enlargelimits=0.05
  ]

  \addplot[blue, mark=*, mark size=1.5 pt, thick] coordinates {\coords};
  \addlegendentry{$L_{\rk}(n,m,k,q)$}

  \foreach \x in {7,14,21} {
    \addplot[dashed, red, thick] coordinates {(\x,1) (\x,8)};
  }

  \end{axis}
\end{tikzpicture}
\end{center}

\pgfmathsetmacro{\m}{10}
\pgfmathsetmacro{\k}{4}

\newcommand{\coordss}{}
\foreach \n in {1,...,40} {
  \pgfmathtruncatemacro{\cond}{\n <= \m}
  \ifnum\cond=1
    \pgfmathtruncatemacro{\s}{max(floor(\n / (2^(\k - 1))), 1)}
    \pgfmathtruncatemacro{\Lval}{\n - \s + 1}
  \else
    \pgfmathtruncatemacro{\t}{floor(\k - \n / \m)}
    \pgfmathtruncatemacro{\a}{\n - (\k - \t - 2)*\m}
    \pgfmathtruncatemacro{\s}{max(floor(\a / (2^(\t + 1))), 1)}
    \pgfmathtruncatemacro{\Lval}{\m - \s + 1}
  \fi
  \xdef\coordss{\coordss (\n,\Lval)}
}

\pgfplotsset{
  my xticklabels/.style={
    xtick={1,...,40},
    xticklabels={
      , , , , 5, , , , ,10,
      , , , ,15, , , , , 20,
      , , , ,25, , , , , 30,
      , , , ,35, , , , , 40
    }
  }
}

\begin{center}
\begin{tikzpicture}  
  \begin{axis}[
    width=16cm,
    height=9cm,
    xlabel={$n$},
    ylabel={$L_{\rk}(n, m, k,q)$},
    grid=major,
    thick,
    legend pos=north east,
    title={Plot of $L_{\rk}(n, m, k,q)$ for $m=10$, $k=4$},
    enlargelimits=0.05,
    xticklabel style={font=\small},
    my xticklabels,
  ytick={1,...,10},
  yticklabels={1,2,3,4,5,6,7,8,9,10}
  ]

  \addplot[blue, mark=*, mark size=1.5pt, thick] coordinates {\coordss};
  \addlegendentry{$L_{\rk}(n,m,k,q)$}

  \foreach \x in {10,20,30,40} {
    \addplot[dashed, red, thick] coordinates {(\x,1) (\x,11)};
  }

  \end{axis}
\end{tikzpicture}
\end{center}

For all admissible values of \( n, m, k, q \), we have determined the exact value of the function \( L_{\rk}(n, m, k, q) \). This leads to the following definition.

\begin{definition}
Let \( \C \) be a nondegenerate \( [n,k]_{q^m/q} \) code. We say that \( \C \) is \textbf{\( L_{\rk} \)-optimal} if
\[
\lvert \mathrm{WS}(\C) \rvert = L_{\rk}(n, m, k, q).
\]
\end{definition}

Construction \ref{constr:casenleqm} and Construction \ref{constr:casen>m} provide explicit constructions of  \( L_{\rk} \)-optimal codes for every \( n \leq m \) (in the first case) and for \( n > m \) (in the second case), by using Theorems \ref{thm:constrnleqm} and \ref{thm:constrn>m}, respectively.
 
\begin{corollary} \label{prop:existenceofLoptimal}
For any \( k<n \leq m \), the code \( \mathcal{C}_{\lambda, \mathbf{n}} \) defined as in Construction \ref{constr:casenleqm} is an \( L_{\rk} \)-optimal \( [n,k]_{q^m/q} \) code. 

For any \( n > m \) and for any $k$ with \( n \leq km \), the code \( \mathcal{C}_{\lambda, \mathbf{n}} \) defined as in Construction \ref{constr:casen>m} is an \( L_{\rk} \)-optimal \( [n,k]_{q^m/q} \) code.
\end{corollary}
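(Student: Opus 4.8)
The plan is to recognize that this corollary is a direct synthesis of the construction theorems and the two main theorems proved in the preceding sections. Since $L_{\rk}$-optimality is defined only for nondegenerate codes, the argument splits into two checks in each regime: first, that $\mathcal{C}_{\lambda,\mathbf{n}}$ is nondegenerate, so that the notion applies to it; and second, that its number of distinct nonzero weights coincides with the value $L_{\rk}(n,m,k,q)$ delivered by the Main Theorem.

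For the nondegeneracy check, I would argue directly from the block-diagonal shape of $G_{\lambda,\mathbf{n}}$ in \eqref{eq:generatormatrixlambdageneral}. Writing $\mathbf{e}_1,\dots,\mathbf{e}_k$ for the standard basis of $\F_{q^m}^k$, the $\F_q$-span of the columns of $G_{\lambda,\mathbf{n}}$ is the direct sum $\bigoplus_{i=1}^k U_{\lambda,\ell_i}\mathbf{e}_i$, where $\ell_i$ denotes the length of the $i$-th diagonal block and $U_{\lambda,\ell_i}=\langle 1,\lambda,\dots,\lambda^{\ell_i-1}\rangle_{\F_q}$. In every case of both constructions each block length satisfies $\ell_i\le m$: for $n\le m$ this is clear since $\ell_i\le n\le m$, while for $n>m$ the largest block coming from $\Psi$ has length $\lceil a/2 \rceil\le m$ by Proposition~\ref{prop:m<a leq 2m}, and the remaining blocks have length $m$, $\gamma+1\le m-1$, or $1$. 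Since $\lambda$ generates $\F_{q^m}$ over $\F_q$, the elements $1,\lambda,\dots,\lambda^{\ell_i-1}$ are then $\F_q$-linearly independent, so $\dim_{\F_q}U_{\lambda,\ell_i}=\ell_i$ and the $\F_q$-column span has dimension $\sum_i \ell_i=n$. Hence $\mathcal{C}_{\lambda,\mathbf{n}}$ is nondegenerate, as required by the definition.

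With nondegeneracy in hand, the equality $\lvert \mathrm{WS}(\mathcal{C}_{\lambda,\mathbf{n}})\rvert=L_{\rk}(n,m,k,q)$ is immediate. In the case $k<n\le m$, Theorem~\ref{thm:constrnleqm} gives $\mathrm{WS}(\mathcal{C}_{\lambda,\mathbf{n}})=\{s,s+1,\dots,n\}$, hence $\lvert \mathrm{WS}(\mathcal{C}_{\lambda,\mathbf{n}})\rvert=n-s+1$, while Theorem~\ref{thm:small_n} gives $L_{\rk}(n,m,k,q)=n-s+1$; the two agree. In the case $n>m$ with $n\le km$, Theorem~\ref{thm:constrn>m} gives $\lvert \mathrm{WS}(\mathcal{C}_{\lambda,\mathbf{n}})\rvert=m-h+1$, and Theorem~\ref{thm:big_n} gives $L_{\rk}(n,m,k,q)=m-h+1$; again the two agree. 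In either regime the defining equality of $L_{\rk}$-optimality holds, proving the claim.

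Because every quantitative ingredient has already been established, there is no genuine obstacle here; the only point deserving a moment of care is the nondegeneracy verification, precisely because this hypothesis is built into the definition of $L_{\rk}$-optimality but is not asserted explicitly in the construction theorems themselves.
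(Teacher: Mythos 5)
Your proof is correct and follows essentially the same route as the paper, which presents this corollary as an immediate consequence of combining Theorem \ref{thm:constrnleqm} with Theorem \ref{thm:small_n} in the regime $k<n\le m$, and Theorem \ref{thm:constrn>m} with Theorem \ref{thm:big_n} in the regime $n>m$. The explicit nondegeneracy verification you include (each block length is at most $m$, so the $\F_q$-column span of the block-diagonal generator matrix has dimension $\sum_i \ell_i = n$) is sound and is a worthwhile detail that the paper leaves implicit.
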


As a consequence, we obtain the following existence result.

\begin{corollary}
For all admissible values of \( n, m, k, q \), with \( n \leq km \), there exists an \( L_{\rk} \)-optimal \( [n,k]_{q^m/q} \) code.
\end{corollary}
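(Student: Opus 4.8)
The plan is to reduce the statement to the explicit constructions already certified in \Cref{prop:existenceofLoptimal}, after separately disposing of the two boundary parameter regimes that were excluded by the running hypothesis $n>k>1$. Since a code is $L_{\rk}$-optimal precisely when its weight spectrum has cardinality $L_{\rk}(n,m,k,q)$, it suffices to exhibit, for each admissible tuple with $k\le n\le km$, one nondegenerate $[n,k]_{q^m/q}$ code realizing that value.

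First I would handle $k=1$ and $n=k$. For $k=1$ every nonzero codeword is a scalar multiple of a fixed generator, and since multiplication by a nonzero element of $\F_{q^m}$ is an $\F_q$-linear bijection it preserves rank weight; hence any nondegenerate $[n,1]_{q^m/q}$ code has a single nonzero weight, matching $L_{\rk}(n,m,1,q)=1$. For $n=k$ the ambient space $\F_{q^m}^k$ is itself a nondegenerate $[k,k]_{q^m/q}$ code whose weight spectrum is $\{1,\dots,\min\{k,m\}\}$, so it attains $L_{\rk}(k,m,k,q)=\min\{k,m\}$. With these cases in hand I may assume $n>k>1$ and split along the dichotomy governing the whole paper: if $n\le m$, \Cref{prop:existenceofLoptimal} provides the $L_{\rk}$-optimal code $\mathcal{C}_{\lambda,\mathbf{n}}$ of \Cref{constr:casenleqm}, while if $n>m$ (which forces $k\ge 2$, as $n\le km$ excludes $k=1$) the same corollary provides the $L_{\rk}$-optimal code of \Cref{constr:casen>m}. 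Collecting these cases exhausts all admissible parameters.

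Since every ingredient is already established --- the exact value of $L_{\rk}(n,m,k,q)$ from the Main Theorem, and the weight-spectrum computations of \Cref{thm:constrnleqm} and \Cref{thm:constrn>m} underlying \Cref{prop:existenceofLoptimal} --- I do not anticipate a genuine obstacle. The only point demanding care is confirming that the boundary tuples $k=1$ and $n=k$, which lie outside the hypothesis $n>k>1$ of the cited corollary, are indeed covered, and the elementary arguments above settle exactly these. The corollary is therefore essentially a bookkeeping result assembling the explicit optimal constructions across every parameter range.
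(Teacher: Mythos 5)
Your proof is correct and takes essentially the same route as the paper, which derives this corollary directly from Corollary~\ref{prop:existenceofLoptimal} without further argument. Your explicit handling of the boundary cases $k=1$ and $n=k$ is a reasonable extra precaution, since the paper dispatches these trivially in Section~\ref{sec:preliminaries} via $L_{\rk}(n,m,1,q)=1$ and $L_{\rk}(k,m,k,q)=\min\{k,m\}$ before restricting to $n>k>1$.
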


We also note that \( L_{\rk} \)-optimal codes coincide exactly with full weight spectrum rank-metric codes in the following cases.
\begin{proposition}
In the case where \( n < 2^k \) with \( n \leq m \), and in the case where \( n < 2^{t+2} \) with \( n > m \), where \( t = \left\lfloor \frac{km - n}{m} \right\rfloor \), an \( [n,k]_{q^m/q} \) code is \( L_{\rk} \)-optimal if and only if it is an FWS code.
\end{proposition}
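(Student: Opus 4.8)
The plan is to reduce everything to the single observation that, under either hypothesis, the maximum $L_{\rk}(n,m,k,q)$ coincides with the trivial upper bound $\min\{m,n\}$, at which point the two notions of optimality become tautologically equivalent. Recall the chain $\lvert \mathrm{WS}(\C)\rvert \le L_{\rk}(n,m,k,q) \le \min\{m,n\}$, valid for every nondegenerate $[n,k]_{q^m/q}$ code $\C$: the first inequality because $L_{\rk}$ is by definition the maximum spectrum size, the second by \eqref{eq:trivialrankbound}. By definition $\C$ is $L_{\rk}$-optimal iff $\lvert \mathrm{WS}(\C)\rvert = L_{\rk}(n,m,k,q)$, and it is FWS iff $\lvert \mathrm{WS}(\C)\rvert = \min\{m,n\}$. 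Thus, once we know $L_{\rk}(n,m,k,q) = \min\{m,n\}$, these two equalities are identical and the biconditional follows immediately.

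First I would verify that each hypothesis forces $L_{\rk}(n,m,k,q) = \min\{m,n\}$. In the regime $n \le m$, the hypothesis $n < 2^{k}$ gives $n/2^{k-1} < 2$, hence $s = \max\left\{\floor*{\frac{n}{2^{k-1}}},1\right\} = 1$, so Theorem~\ref{thm:small_n} yields $L_{\rk}(n,m,k,q) = n - s + 1 = n = \min\{m,n\}$. In the regime $n > m$, the relevant hypothesis is the FWS-existence condition $a < 2^{t+2}$ of Proposition~\ref{prop:characterFWSn>m} (recall $m+1 \le a \le 2m$ by Proposition~\ref{prop:m<a leq 2m}); it gives $a/2^{t+1} < 2$, hence $h = \max\left\{\floor*{\frac{a}{2^{t+1}}},1\right\} = 1$, and Theorem~\ref{thm:big_n} gives $L_{\rk}(n,m,k,q) = m - h + 1 = m = \min\{m,n\}$. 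Equivalently, by Propositions~\ref{prop:characterFWSn<m} and~\ref{prop:characterFWSn>m}, these are precisely the parameter ranges in which an FWS code exists.

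With $L_{\rk}(n,m,k,q) = \min\{m,n\}$ in hand, the proof concludes at once. If $\C$ is FWS then $\lvert \mathrm{WS}(\C)\rvert = \min\{m,n\} = L_{\rk}(n,m,k,q)$, so $\C$ is $L_{\rk}$-optimal; conversely, if $\C$ is $L_{\rk}$-optimal then $\lvert \mathrm{WS}(\C)\rvert = L_{\rk}(n,m,k,q) = \min\{m,n\}$, so $\C$ is FWS. This settles both implications.

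I do not expect a genuine obstacle: the only computation is the elementary floor estimate showing $s=1$ (resp. $h=1$), and the remainder is bookkeeping with the definitions already in place. The single conceptual point worth emphasizing is that the biconditional is special to these parameter ranges. Outside them one has $L_{\rk}(n,m,k,q) < \min\{m,n\}$ by Theorems~\ref{thm:small_n} and~\ref{thm:big_n}, so an $L_{\rk}$-optimal code then satisfies $\lvert \mathrm{WS}(\C)\rvert < \min\{m,n\}$ and cannot be FWS; conversely no FWS code exists at all. Hence the stated hypotheses are exactly those making \emph{$L_{\rk}$-optimal} and \emph{FWS} coincide, and the statement is sharp.
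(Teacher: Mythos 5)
Your proof is correct and follows essentially the same route as the paper, which simply cites Propositions~\ref{prop:characterFWSn<m} and~\ref{prop:characterFWSn>m}; you have just made explicit the underlying computation that $s=1$ (resp.\ $h=1$) forces $L_{\rk}(n,m,k,q)=\min\{m,n\}$, at which point the two optimality notions coincide tautologically. One small remark: you quietly replaced the stated hypothesis $n<2^{t+2}$ by the condition $a<2^{t+2}$ from Proposition~\ref{prop:characterFWSn>m}; since $a\le n$ the former implies the latter, so your argument still covers the statement as written (and in fact establishes it under the weaker, sharper hypothesis $a<2^{t+2}$).
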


\begin{proof}
    This follows from Propositions~\ref{prop:characterFWSn<m} and~\ref{prop:characterFWSn>m}, respectively.
\end{proof}

We conclude this section with an observation analogous to Remark~\ref{rmk:nsmallandsgeqd}. Specifically, we note that the parameter \( h \), as defined in Theorem~\ref{thm:big_n}, is greater than or equal to the minimum distance of the code.

\begin{remark}
\label{rmk:nlargeandsgeqd}
Assume that \( km \geq n > m \), and let \( h = \max\left\{ \left\lfloor \frac{a}{2^{t+1}} \right\rfloor, 1 \right\} \), where \( a = n - (k - t - 2)m \) and \( t = \left\lfloor k - \frac{n}{m} \right\rfloor \). 

We claim that \( h \geq d \), where \( d \) is the minimum distance of any nondegenerate $L_{\rk}$-optimal \( [n, k, d]_{q^m/q} \) code. By Theorem~\ref{thm:big_n}, we have 
\[
L_{\rk}(n, m, k, q) = m - h + 1 \leq m.
\]
Since \( d \) is the minimum distance, the nonzero weights of any code must lie in the range \( \{d, d+1, \dots, m\} \), so the total number of distinct nonzero weights is at most \( m - d + 1 \). But since \( L_{\rk}(n, m, k, q) = m - h + 1 \), it follows that
\[
m - h + 1 \leq m - d + 1,
\]
which implies \( h \geq d \), as claimed.
\end{remark}

\section{On the equivalence issue}
\label{section:nonequivalence}

Since our results determine the maximum number of nonzero weights that an \( \F_{q^m} \)-linear rank-metric code can attain, a natural question arises: do there exist nonequivalent codes with the same parameters that achieve this maximum?

It is very likely that, by selecting a different primitive element \( \lambda \) in Construction~\ref{constr:casen>m} or Construction~\ref{constr:casenleqm}, one can obtain nonequivalent \( L_{\rk} \)-optimal codes. However, a detailed investigation of this possibility lies beyond the scope of the present work and is left as a direction for future research.

\bigskip

Another way to obtain nonequivalent $L_{\rk}(n,m,k,q)$-optimal codes is by choosing different values for the $n_{i}'s$ in our constructions. This might not always be possible, but we provide one example to illustrate the point.

\begin{example}
Let $n = 9$ and $m=6$. Construction \ref{constr:casen>m} would give the code $\C_{1}$ with generator matrix
$$G_{1} = \begin{bmatrix}
\uu_{\lambda, 5} & 0 & 0 \\
0 & \uu_{\lambda, 2} & 0 \\
0 & 0 & \uu_{\lambda, 2} \\
\end{bmatrix},$$
which provides the weight spectrum $\{2, 3, 4, 5, 6\}$. However, the code $\C_{2}$ with generator matrix
$$G_{2} = \begin{bmatrix}
\uu_{\lambda, 4} & 0 & 0 \\
0 & \uu_{\lambda, 3} & 0 \\
0 & 0 & \uu_{\lambda, 2} \\
\end{bmatrix}$$
has the same weight spectrum, and is not equivalent to $\C_{2}$ (since it does not have $2$ linearly independent codewords with rank $2$).

Note also that the code $\mathcal{C}_{3}$ with generator matrix
$$G_{3} = \begin{bmatrix}
\uu_{\lambda, 5} & 0 & 0 \\
0 & \uu_{\lambda, 3} & 0 \\
0 & 0 & \uu_{\lambda, 1} \\
\end{bmatrix}$$
has weight spectrum $\{ 1, 3, 4, 5, 6 \}$, which is a different from that of $\mathcal{C}_{1}$ but also reaches the maximum size. This shows that the maximum size of a weight spectrum determined by the function $L_{\rk}$ is not necessarily reached by a unique weight spectrum.

$\hfill \lozenge$
\end{example}

Thus, the example above provides non-equivalent constructions of 3-dimensional 
\(L_{\mathrm{rk}}\)-optimal codes. Indeed, although these constructions have the 
same weight spectrum, they have different weight distributions. It is reasonable 
to expect that this construction can be extended to values of \(k > 3\). In the next 
subsection, we present a classification result for 2-dimensional codes under certain 
metric assumptions.

\subsection{A classification result for $k=2$} \label{sec:classification2}

In this section, we present a classification result of two-dimensional non-degenerate codes $\C\subseteq\F_{q^m}^{n}$ having the maximum possible number of distinct non-zero weights, in the case in which $n$ is even and $2\leq n\leq m$ and $\C$ admits at least three codewords, pairwise $\Fqm$-linearly independent, of weight $\frac{n}{2}$. \\

We observe that if $k=2$, $n$ is even and $2\leq n\leq m$, then 
\[
L_{\rk}(n,m,2,q)=n-\frac{n}{2}+1=\frac{n}{2}+1.
\]
Thereafter, in Theorem \ref{thm:classrankmetricFWS} we classify, up to equivalence, the $L_{\rk}$-optimal $[n,2]_{q^m/q}$ codes $\C$ that admit at least three codewords, pairwise $\Fqm$-linearly independent, of weight $\frac{n}{2}$. This result is based on \cite[Theorem 1.2]{castello2024full} which provides a classification result of \emph{full weight spectrum one-orbit cyclic subspace codes}.\\

We start with a preliminary proposition, which guarantees that, under our assumptions, a system associated to $\C$ is, up to equivalence, of the form $S\times S$, for some $\Fq$-subspace $S$ of $\Fqm$.

\begin{proposition}
    \label{cor:U=SxS}
Let $W$ be an $\Fq$-subspace of $\Fqm^2$ such that $\dim_{\Fq}(W)=2\ell$. Assume there exist three elements ${\bf y}_1, {\bf y}_2, {\bf y}_2\in W$ such that no two of them are $\Fqm$-proportional and $\dim_{\Fq}(W\cap \langle {\bf y}_i\rangle_{\Fqm})=\ell$ for $i=1,2,3$. Then $W$ is $\mathrm{G}\mathrm{L}(2,q^m)$-equivalent to an $\Fq$-subspace $U=S \times S$,
for some $\Fq$-subspace $S$ of $\Fqm$ with $\dim_{\Fq}(S)=\ell$.
\end{proposition}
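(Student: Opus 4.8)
The plan is to bring $W$ into a normal form by exploiting the action of $\GL(2,q^m)$ on the points of the projective line $\PG(1,q^m)$, and then to read off the product structure from the three intersection conditions.

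First I would observe that the subspaces $\langle \mathbf{y}_1\rangle_{\Fqm}, \langle \mathbf{y}_2\rangle_{\Fqm}, \langle \mathbf{y}_3\rangle_{\Fqm}$ are three pairwise distinct points of $\PG(1,q^m)$, precisely because no two of the $\mathbf{y}_i$ are $\Fqm$-proportional. Since $\PGL(2,q^m)$ acts sharply $3$-transitively on the points of $\PG(1,q^m)$, there exists a matrix $B \in \GL(2,q^m)$ sending these three points to $\langle(1,0)\rangle_{\Fqm}$, $\langle(0,1)\rangle_{\Fqm}$, and $\langle(1,1)\rangle_{\Fqm}$, respectively. I would then replace $W$ by the equivalent system $W' := W\cdot B$. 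Since $B$ is an $\Fq$-linear bijection of $\Fqm^2$ carrying each $W \cap \langle \mathbf{y}_i\rangle_{\Fqm}$ onto $W' \cap \langle \mathbf{y}_i B\rangle_{\Fqm}$, all three intersection dimensions remain equal to $\ell$ and $\dim_{\Fq}(W') = 2\ell$. It therefore suffices to prove the claim for $W'$.

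Next I would use the two ``axis'' intersections. Writing
\[
W' \cap \langle(1,0)\rangle_{\Fqm} = A \times \{0\}, \qquad W' \cap \langle(0,1)\rangle_{\Fqm} = \{0\}\times C,
\]
for suitable $\Fq$-subspaces $A, C \subseteq \Fqm$, the hypotheses give $\dim_{\Fq}(A) = \dim_{\Fq}(C) = \ell$. These two subspaces of $W'$ meet only in $\mathbf{0}$, one having zero second coordinate and the other zero first coordinate, so their sum $A\times C$ is contained in $W'$ and has $\Fq$-dimension $2\ell$. As $\dim_{\Fq}(W') = 2\ell$, this forces $W' = A \times C$. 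Finally, intersecting $W' = A\times C$ with the diagonal $\langle(1,1)\rangle_{\Fqm} = \{(z,z): z \in \Fqm\}$ yields $\{(z,z): z \in A\cap C\}$, of $\Fq$-dimension $\dim_{\Fq}(A\cap C)$. By hypothesis this equals $\ell$, and since $A$ and $C$ are both $\ell$-dimensional, the inclusion $A\cap C \subseteq A$ between spaces of equal dimension forces $A = C =: S$. Hence $W' = S\times S$ with $\dim_{\Fq}(S) = \ell$, and $W$ is $\GL(2,q^m)$-equivalent to $S\times S$.

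I do not expect a serious obstacle: the only genuinely conceptual step is the normalization via $3$-transitivity, after which the argument is pure dimension counting. The point deserving the most care is checking that applying $B$ preserves the $\Fq$-dimensions of the relevant intersections; this holds because $B$ is a bijective $\Fq$-linear map sending $\Fqm$-lines to $\Fqm$-lines and $W$ to $W'$, so it restricts to an isomorphism of each intersection.
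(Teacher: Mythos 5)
Your proof is correct, and it takes a genuinely different and more self-contained route than the paper. The paper first invokes an external structural result (Proposition 3.2 of the cited work of Napolitano et al.) to reduce $W$ to the form $S\times T$ with $\dim_{\Fq}(S)=\dim_{\Fq}(T)=\ell$, and then applies a second external result describing the full set of $\Fqm$-lines meeting $S\times T$ in dimension $\ell$ as $\{\langle(\xi,1)\rangle_{\Fqm}\colon \xi\in a_1^{-1}S\cap\cdots\cap a_\ell^{-1}S\}$; the existence of three such lines forces this intersection to contain a nonzero $\xi$, whence $S=\xi T$ and a diagonal element of $\GL(2,q^m)$ yields $S\times S$. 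You instead normalize the three distinct points $\langle\mathbf{y}_i\rangle_{\Fqm}$ of $\PG(1,q^m)$ to $\langle(1,0)\rangle$, $\langle(0,1)\rangle$, $\langle(1,1)\rangle$ via sharp $3$-transitivity of $\PGL(2,q^m)$ (correctly noting that intersection dimensions are preserved), and then everything follows by dimension counting: the two axis intersections give $A\times\{0\}$ and $\{0\}\times C$ whose direct sum $A\times C\subseteq W'$ already exhausts the $2\ell$ dimensions, and the diagonal intersection $\{(z,z)\colon z\in A\cap C\}$ having dimension $\ell$ forces $A=C$. Both steps are sound. What your approach buys is an elementary, citation-free proof of exactly this statement; what the paper's approach buys is that the second cited theorem delivers strictly more information (the precise count $q^j$ of weight-$\ell$ points of $S\times T$), which is not needed here but connects the proposition to the classification machinery used in Theorem \ref{thm:classrankmetricFWS}. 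For the proposition as stated, your argument is arguably the cleaner one.
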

\begin{proof}
By \cite[Proposition 3.2]{napolitano2022linearsets}, $W$ is $\mathrm{G}\mathrm{L}(2,q^m)$-equivalent to an $\Fq$-subspace $U=S \times T$,
for some $\Fq$-subspaces $S$ and $T$ of $\Fqm$ with $\dim_{\Fq}(S)=\dim_{\Fq}(T)=\ell$. Let $T=\langle a_1,\dots, a_\ell\rangle_{\Fq}$, for some $a_1,\ldots,a_{\ell} \in \F_{q^m}$. By \cite[Theorem 3.4]{napolitano2023classifications}, the set of the $\Fqm$-subspaces $\langle {\bf x}\rangle_{\Fqm}$ with $\langle {\bf x} \rangle_{\F_{q^m}} \neq \langle (0,1) \rangle_{\F_{q^m}}$ such that $\dim_{\Fq}(W\cap \langle {\bf x}\rangle_{\Fqm})=\ell$ is 
\[ \{ \langle (\xi,1)\rangle_{\Fqm} \colon \xi \in a_1^{-1}S\cap \ldots \cap a_\ell^{-1} S \} \]
and its size is $q^j$ with $j=\dim_{\Fq}(a_1^{-1}S\cap \ldots \cap a_{\ell}^{-1} S)$. In particular, since there exist three elements ${\bf y'}_1, {\bf y'}_2, {\bf y'}_2\in S\times T$ such that no two of them are $\Fqm$-proportional and $\dim_{\Fq}((S   \times T)\cap \langle {\bf y'}_i\rangle_{\Fqm})=\ell$ for $i=1,2,3$, we have that $\dim_{\Fq}(a_1^{-1}S\cap \ldots \cap a_\ell^{-1} S)\geq 1$. So there exists $\xi\in a_1^{-1}S\cap \ldots \cap a_\ell^{-1} S$ with $\xi\neq 0$. Thus, $\xi a_i \in S$ for every $i=1,\dots, \ell$, and, since $\dim_{\Fq}(S)=\dim_{\Fq}( T)$, we get $S=\xi T$. Thus, $U\begin{pmatrix}
    1 & 0 \\
    0 & \xi
\end{pmatrix}=S\times S$, and hence we may assume $U=S\times S$ and the assertion is proved.
\end{proof}

We have then the following classification result.

\begin{theorem}
\label{thm:classrankmetricFWS}
Let $\C$ be an $[2\ell,2]_{q^m/q}$ code. Suppose that $\C$ has at least three codewords of weight $\ell$ such that no two of them are $\F_{q^m}$-proportional. Then $\C$ is $L_{\rk}$-optimal if and only if, up to equivalence, $\C$ admits one of the following generator matrices:

\begin{itemize}
\item 
$G=\left[\begin{matrix}
1 & \lambda & \cdots & \lambda^{\ell-1} & 0 & 0 & \cdots & 0 \\
0 & 0 & \cdots & 0 & 1 & \lambda & \cdots & \lambda^{\ell-1}
\end{matrix}\right]$, for some  $\lambda\in\F_{q^{m}}\setminus \F_{q}$ such that \[ \ell\leq \begin{cases}
            \frac{[\Fq(\lambda)\colon\Fq]+1}{2} & \mbox{if} \,\,\, \dim_{\Fq}(\Fq(\lambda))<m,\\
            \frac{m}{2} & \mbox{if} \,\,\, \dim_{\Fq}(\Fq(\lambda))=m,
        \end{cases} \]
\item 
$G = \left[
\begin{array}{*{16}{c}}
1 & \xi & \lambda & \xi\lambda & \cdots & \lambda^{l-1} & \xi\lambda^{\,l-1} & \lambda^l & 0 & \cdots & 0 & 0 & 0 & 0 & 0 & 0 \\
0 & 0 & 0 & 0 & \cdots & 0 & 0 & 0 & 1 & \xi & \lambda & \xi\lambda & \cdots & \lambda^{\,l-1} & \xi\lambda^{\,l-1} & \lambda^l
\end{array}
\right]$
\end{itemize}
where $m$ is even, $\lambda\in\F_{q^m}\setminus \F_{q^2}$, $\ell=2l+1$, $l< \frac{[\F_{q^2}(\lambda)\colon \F_{q^2}]}2$, and $\F_{q^2}=\langle 1, \xi\rangle_{\Fq}$.
\end{theorem}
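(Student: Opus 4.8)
The plan is to move to the geometric picture, reduce the system associated with $\C$ to the product form $S\times S$ by means of Proposition~\ref{cor:U=SxS}, and then recognise $L_{\rk}$-optimality as the full-weight-spectrum property of the one-orbit cyclic subspace code generated by $S$, at which point the classification of~\cite[Theorem 1.2]{castello2024full} applies and yields the two generator matrices after writing out $S\times S$ in coordinates.

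First I would fix a generator matrix $G$ of $\C$ and let $U\subseteq\F_{q^m}^2$ be the associated system, so that $\dim_{\F_q}(U)=2\ell$. By Theorem~\ref{th:connection}, a codeword $\xx G$ has weight $\ell$ precisely when $\dim_{\F_q}(U\cap\xx^\perp)=\ell$. Since $k=2$, for every nonzero $\xx$ the space $\xx^\perp$ is a one-dimensional $\F_{q^m}$-subspace; because $\ell\geq 1$, each such $\xx^\perp$ contains a nonzero vector $\mathbf{y}\in U$ with $\langle\mathbf{y}\rangle_{\F_{q^m}}=\xx^\perp$. The three weight-$\ell$ codewords, being pairwise $\F_{q^m}$-non-proportional, correspond to three pairwise non-proportional $\xx_1,\xx_2,\xx_3$, hence to three pairwise non-proportional $\mathbf{y}_1,\mathbf{y}_2,\mathbf{y}_3\in U$ satisfying $\dim_{\F_q}(U\cap\langle\mathbf{y}_i\rangle_{\F_{q^m}})=\ell$. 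This is exactly the hypothesis of Proposition~\ref{cor:U=SxS}, so after replacing $\C$ by an equivalent code I may assume $U=S\times S$ for an $\F_q$-subspace $S\subseteq\F_{q^m}$ with $\dim_{\F_q}(S)=\ell$.

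Next I would compute the weight spectrum of the code with system $S\times S$. For $\xx=(x_1,x_2)$ with $x_1\neq 0$, setting $\beta=-x_2x_1^{-1}$ one checks that $(S\times S)\cap\xx^\perp=\{(\beta s,s):s\in S,\ \beta s\in S\}$, whose $\F_q$-dimension is $\dim_{\F_q}(S\cap\beta^{-1}S)$; the degenerate cases $x_1=0$ or $x_2=0$ give dimension $\ell$. By Theorem~\ref{th:connection} this gives
\[
\mathrm{WS}(\C)=\{\,2\ell-\dim_{\F_q}(S\cap\gamma S)\ :\ \gamma\in\F_{q^m}^{*}\,\}.
\]
As $0\leq\dim_{\F_q}(S\cap\gamma S)\leq\ell$ and the value $\ell$ is always attained at $\gamma=1$, the spectrum is contained in $\{\ell,\ell+1,\dots,2\ell\}$, a set of size $\ell+1=L_{\rk}(2\ell,m,2,q)$ by Theorem~\ref{thm:small_n}. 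Therefore $\C$ is $L_{\rk}$-optimal if and only if every intersection dimension $0,1,\dots,\ell$ is realised by some $\gamma\in\F_{q^m}^{*}$, that is, if and only if the one-orbit cyclic subspace code $\{\gamma S:\gamma\in\F_{q^m}^{*}\}$ has full weight spectrum in the sense of~\cite{castello2024full}.

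Finally I would invoke~\cite[Theorem 1.2]{castello2024full}, which classifies, up to equivalence, the $\F_q$-subspaces $S$ generating a full-weight-spectrum one-orbit cyclic subspace code: these are $S=\langle 1,\lambda,\dots,\lambda^{\ell-1}\rangle_{\F_q}$ under the stated bound on $\ell$, and, when $m$ is even with $\ell=2l+1$, the subspace $S=\langle 1,\xi,\lambda,\xi\lambda,\dots,\lambda^{l-1},\xi\lambda^{l-1},\lambda^{l}\rangle_{\F_q}$ with $\F_{q^2}=\langle 1,\xi\rangle_{\F_q}$ and $l<[\F_{q^2}(\lambda):\F_{q^2}]/2$. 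Writing the generator matrix of $S\times S$ by placing an $\F_q$-basis of $S$ in each of the two diagonal blocks reproduces verbatim the two matrices in the statement. The main obstacle will be the bookkeeping in this last transfer: I must verify that the equivalence of subspace codes used in~\cite{castello2024full} (scalar multiplication together with the field automorphisms of $\F_{q^m}$) is compatible with the $\GL(2,q^m)$-equivalence of the systems $S\times S$ that governs equivalence of rank-metric codes, so that the classification carries over cleanly and the parameter constraints on $\lambda$ and $\ell$ survive unchanged.
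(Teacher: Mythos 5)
Your proposal is correct and follows essentially the same route as the paper: translate weights into intersection dimensions via Theorem~\ref{th:connection}, use the three pairwise non-proportional weight-$\ell$ codewords to satisfy the hypothesis of Proposition~\ref{cor:U=SxS} and reduce to $U=S\times S$, observe that $L_{\rk}$-optimality is equivalent to all intersection dimensions $0,\dots,\ell$ of $\dim_{\F_q}(\alpha S\cap\beta S)$ being attained, and conclude with \cite[Theorem 1.2]{castello2024full}. The only difference is your closing caveat about matching the two notions of equivalence, which the paper handles implicitly by working with $\GL(2,q^m)$-equivalence of systems throughout.
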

\begin{proof}
Let $U$ be a $[2\ell,2]_{q^m/q}$ system associated with $\C$. Note that $\dim_{\Fq}(U)=2\ell$. Let $G$ be a generator matrix of $\C$ such that $U$ is the $\F_q$-span of the columns of $G$. By Theorem \ref{th:connection} for every $\mathbf{x} \in \F_{q^m}^2$,
\[
\w(\mathbf{x}G)=2\ell-\dim_{\Fq}(U\cap \mathbf{x}^\perp).
\]
and let $\textbf{c}_1=\mathbf{x}_1 G$, $\textbf{c}_2=\mathbf{x}_2 G$ and $\textbf{c}_3=\mathbf{x}_3 G$ be three codewords of $\C$ of weight $\ell$ that are pairwise $\Fqm$-linearly independent. Then there exist $\mathbf{y}_1$, $\mathbf{y}_2$, $\mathbf{y}_3\in\F_{q^m}^{2}$ such that $\langle \mathbf{x}_i\rangle_{\F_{q^m}}^\perp=\langle\mathbf{y}_i\rangle_{\F_{q^m}}$ and 
\[
\ell=\w(\mathbf{x}_iG)=2\ell-\dim_{\Fq}(U\cap \langle \mathbf{x}_i\rangle_{\F_{q^m}}^\perp)=2\ell-\dim_{\Fq}(U\cap \langle \mathbf{y}_i\rangle_{\F_{q^m}}),
\]
i.e.
\[
\dim_{\Fq}(U\cap \langle \mathbf{y}_i\rangle_{\F_{q^m}})=\ell,
\]
for every $i=1,2,3$. Thus, there exist at least three elements ${\bf y}_1, {\bf y}_2, {\bf y}_2\in U$ such that no two of them are $\Fqm$-proportional and $\dim_{\Fq}(U\cap \langle {\bf y}_i\rangle_{\Fqm})=\ell$ for $i=1,2,3$. By Proposition \ref{cor:U=SxS}, up to the action of $\GL(2,q^{m})$, we may assume that $U=S\times S$, for some $\Fq$-subspace $S$ of $\F_{q^m}$ of dimension $\ell$. Again by Theorem \ref{th:connection} for every $\mathbf{x} \in \F_{q^m}^2$, we have
\begin{equation}
\label{eq:weightcodewordweightpoint}
\w(\mathbf{x}G)=2\ell-\dim_{\Fq}((S\times S)\cap \langle \mathbf{x}\rangle_{\F_{q^m}}^\perp)=2\ell-\dim_{\Fq}((S\times S)\cap \langle \mathbf{y}\rangle_{\F_{q^m}}),
\end{equation}
for some $\mathbf{y}\in\F_{q^m}^2$. Note that if ${\bf y}=(\alpha,\beta)\in\Fqm^2$, then
\[
\label{eq:weightcyclishifts}
\dim_{\Fq}((S\times S)\cap \langle \mathbf{y}\rangle_{\F_{q^m}})=\dim_{\Fq}(\alpha S\cap \beta S)
\]
and so
\[
0\leq \dim_{\Fq}((S\times S)\cap \langle \mathbf{y}\rangle_{\F_{q^m}})\leq \dim_{\Fq}(S)=\ell.
\]
Hence, the weight spectrum $\mathrm{WS}(\C)=\lbrace \ell \dots, 2\ell \rbrace$, if and only if, for every $i=\ell,\dots,2\ell$, there exists at least a codeword of weight $i$. By \eqref{eq:weightcodewordweightpoint}, the weight spectrum $\mathrm{WS}(\C)$ of the code $\C$ has size $n-\frac{n}{2}+1=\ell+1$, if and only if, for every $i=0,\dots,\ell$, there exists at least ${\bf y}\in\Fqm^2$ such that $\dim_{\Fq}((S\times S)\cap \langle {\bf y}\rangle_{\Fqm})=i$. As a consequence, by \cite[Theorem 1.2]{castello2024full}, we get that $S$ is one of the following:
\begin{itemize}
        \item [(1)]$S=\langle 1,\lambda,\ldots,\lambda^{\ell-1}\rangle_{\Fq}$ for some $\lambda \in \Fqm \setminus \Fq$, where
        \[ \ell\leq \begin{cases}
            \frac{[\Fq(\lambda)\colon\Fq]+1}{2} & \mbox{if} \,\,\, \dim_{\Fq}(\Fq(\lambda))<m,\\
            \frac{m}{2} & \mbox{if} \,\,\, \dim_{\Fq}(\Fq(\lambda))=m,
        \end{cases} \]
        \item [(2)] $S=\langle 1,\lambda,\ldots,\lambda^{l-1}\rangle_{\F_{q^2}}\oplus \lambda^l\Fq$ for some $\lambda\in\Fqm \setminus \F_{q^2}$, where $\ell=2l+1$, $m$ is even  and 
        $l< \frac{[\F_{q^2}(\lambda)\colon \F_{q^2}]}2$.
    \end{itemize}
    This implies the assertion.

\end{proof}

\section{Conclusion and open problems} \label{sec:conclusions}

In this work, we have determined the maximum number of distinct nonzero rank weights that an $\F_{q^m}$-linear rank-metric code of dimension \( k \) in \( \F_{q^m}^n \) can attain. This characterization is captured by the combinatorial function \( L_{\rk}(n, m, k, q) \), whose exact value we established for all admissible parameters under the nondegeneracy condition \( n \leq km \). In particular, we provided necessary and sufficient conditions for the existence of full weight spectrum (FWS) codes and constructed explicit families of $L_{\rk}$-optimal codes.

\medskip

Several natural directions arise for future research:

\begin{itemize}
\item Constructions~\ref{constr:casenleqm} and~\ref{constr:casen>m} yield examples of \( L_{\rk} \)-optimal codes, and these constructions depend on the choice of a generating element \( \lambda \in \F_{q^m} \) over \( \F_q \). It would therefore be interesting to investigate the question of equivalence among the codes produced by these constructions, and in particular, to determine whether different choices of \( \lambda \) give rise to inequivalent \( L_{\rk} \)-optimal codes.
Also, as shown in Section \ref{section:nonequivalence}, inequivalent $L_{\rk}$-optimal codes can exist even for fixed parameters. A compelling line of investigation would be to construct new families of $L_{\rk}$-optimal codes, distinct from those in Constructions~\ref{constr:casenleqm} and~\ref{constr:casen>m}, that are provably not equivalent to the codes presented in this paper. On the other hand, considering the classification result provided in Section \ref{sec:classification2}, 
it would be interesting to extend such a classification to 
\(L_{\mathrm{rk}}\)-optimal codes also for \(k>2\), or for \(k=2\) without imposing 
any metric assumptions on the code.
    
\item In a similar direction, one could aim to classify full weight spectrum (FWS) codes, namely those whose weight spectrum coincides with that of $\F_{q^m}^n$. Note that a related classification problem has recently been explored in the subspace metric context in~\cite{castello2024full} and \cite{shi2025new}.
    
\item Our study focused on the vector rank-metric setting, where codes are $\F_{q^m}$-linear subspaces of $\F_{q^m}^n$. A natural extension would be to investigate the analogous problem in the matrix setting, where codes are $\F_q$-linear subspaces of $\F_q^{m \times n}$. Using a similar argument to that in the proof of \cite[Proposition 2]{shi2019many}, the maximum number of distinct rank weights that an \( \F_q \)-linear code of dimension \( k \) can attain is at most \( \frac{q^k - 1}{q - 1} \). In the same spirit of Hamming metric case, we define MWS rank-metric codes as the $\F_q$-subspaces of $\F_q^{m\times n}$ such that the number of distinct rank weights of $\mathcal{C}$ is $\frac{q^k-1}{q-1}$. This bound can be achieved as follows: let $\C$ be an MWS (maximum weight spectrum) code of dimension $k$ over $\F_q$ with respect to the Hamming metric, and define the map
\[
\begin{aligned}
\phi\colon \F_q^n &\longrightarrow \F_q^{n \times n} \\
(x_1, \dots, x_n) &\longmapsto \mathrm{diag}(x_1, \dots, x_n).
\end{aligned}
\]
It is easy to verify that $\rk(\phi(c)) = \wt_H(c)$ for all $c \in \C$, so $\phi(\C)$ has $\frac{q^{k}-1}{q-1}$ different weights in the rank metric as well. However, it would be interesting to determine whether this is the only example of such codes. In analogy with the Hamming metric, one may also ask: what is the smallest value of \( n \) and $m$ for which MWS rank-metric codes exist? Therefore, a foundational study of this problem in the matrix setting, particularly for fixed values of $m$ and $n$, would be of significant interest.
\end{itemize}

\section*{Acknowledgements}

The research was partially supported by the Italian National Group for Algebraic and Geometric Structures and their Applications (GNSAGA - INdAM). The third author is supported by the ANR-21-CE39-0009 - BARRACUDA (French \emph{Agence Nationale de la Recherche}) and by Project n° 50424WM - PHC GALILEE 2024 ``Algebraic and Geometric methods in coding theory''.\\

\noindent \textbf{Conflict of interest statement.} The authors declare that there is no conflict of interest. \\
\textbf{Data availability statement.} Data sharing not applicable to this article as no datasets were generated or analysed during the current study.

\bibliographystyle{abbrv}
\bibliography{biblio}

\appendix 

\section{Further properties of $L_{\rk}$-optimal codes} \label{appendix}

In this section, we study further properties of codes attaining the maximum number 
of distinct values, or in other words, \(L_{\mathrm{rk}}\)-optimal codes. 
More precisely, we show that such codes are never maximum rank distance codes 
(unless \(k = 1\)), and we investigate their behavior under duality.

\subsection{Are \( L_{\rk} \)-optimal codes MRD?}

A natural first question is whether codes that are optimal with respect to minimum distance also yield examples of codes that are optimal with respect to the function \( L_{\rk} \).

A Singleton-like bound holds for rank-metric codes~\cite{delsarte1978bilinear}. Specifically, for an \( \nkdqm \) code, one has
\begin{equation} \label{eq:boundgen}
mk \leq \max\{m,n\}  (\min\{n,m\} - d + 1).
\end{equation}
An \( \nkdqm \) code is called a \textbf{maximum rank distance (MRD)} code if it achieves equality in~\eqref{eq:boundgen}.

In this subsection, we investigate whether an \( L_{\rk}(n,m,k,q) \)-optimal code can also be an MRD code. As expected, the answer turns out to be negative.

\begin{theorem}
Let $\C$ be a nondegenerate \( \nkdqm \) code, with \( 1 < k < n \). If $\C$ is $L_{\rk}$-optimal, then $\C$ is not an \( \mathrm{MRD} \) code.
\end{theorem}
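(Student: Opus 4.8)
The plan is to compare the minimum distance of an $L_{\rk}$-optimal code with the distance forced by equality in the Singleton-like bound~\eqref{eq:boundgen}, and to show that the former is \emph{strictly} smaller, so that equality cannot hold. Suppose $\C$ is $L_{\rk}$-optimal with minimum distance $d$. By Remark~\ref{rmk:nsmallandsgeqd} (when $n\le m$) and Remark~\ref{rmk:nlargeandsgeqd} (when $n>m$) we already know $d\le s$ and $d\le h$ respectively, where $s,h$ are the quantities appearing in Theorems~\ref{thm:small_n} and~\ref{thm:big_n}. On the other hand, if $\C$ were MRD, then equality in~\eqref{eq:boundgen} would force $d=n-k+1$ when $n\le m$, and $d=m+1-\tfrac{mk}{n}$ when $n>m$ (so in particular $n\mid mk$). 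Hence it suffices to prove the strict inequalities $s<n-k+1$ and $h<m+1-\tfrac{mk}{n}$: each yields $d<d_{\mathrm{MRD}}$, which contradicts equality in~\eqref{eq:boundgen}.

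I would first dispatch the case $n\le m$, which is elementary. Here $s=\max\{\floor*{n/2^{k-1}},1\}$. If $n<2^k$ then $s=1<n-k+1$ since $k<n$; and if $n\ge 2^k$ then $s\le n/2^{k-1}$, and $n/2^{k-1}<n-k+1$ is equivalent to $n\bigl(1-2^{-(k-1)}\bigr)>k-1$, which holds because $n\bigl(1-2^{-(k-1)}\bigr)\ge 2^k-2>k-1$ for every $k\ge 2$. Thus $d\le s<n-k+1$, so $\C$ is not MRD.

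The case $n>m$ is the heart of the matter. Writing $L_{\rk}=m-h+1$, the inequality $h<m+1-\tfrac{mk}{n}$ is equivalent to $mk<n(m-h+1)=n\,L_{\rk}$, i.e. to $L_{\rk}>\tfrac{mk}{n}$, where $\tfrac{mk}{n}$ is exactly the number of distinct weights a putative MRD code would have (MRD codes realise every weight in $\{d,\dots,m\}$). If $n\nmid mk$ there is no MRD code with these parameters, so I may assume $n\mid mk$. If $h=1$ then $n\,L_{\rk}=nm>mk$ because $k<n$. If $h\ge 2$ I would use $h\le a/2^{t+1}$ together with $m+1\le a\le 2m$ from Proposition~\ref{prop:m<a leq 2m}: setting $p=k-t-2\ge 0$ and $n=pm+a$, one computes
\[
n\,L_{\rk}-mk=pm(m-h)+a(m-h+1)-m(t+2),
\]
where $pm(m-h)\ge 0$. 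It then suffices to bound $a(m-h+1)$ below by the quadratic $a(m+1)-a^2/2^{t+1}$ and verify that it exceeds $m(t+2)$ on $m+1\le a\le 2m$, subject to the case-B constraint $2^{t+1}\le m$. A short analysis of this one-variable quadratic (monotone on the relevant interval, evaluated at $a=m+1$ when $t\ge 1$ and at $a=2m$ when $t=0$) gives strict positivity, hence $mk<n\,L_{\rk}$.

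The genuinely delicate point, which I expect to be the main obstacle, is the boundary $n=km$: there $t=0$, $a=2m$, $h=m$, and $L_{\rk}=1=\tfrac{mk}{n}$, so the inequality degenerates to equality. This configuration is precisely the rank simplex code, which is simultaneously MRD and (trivially) $L_{\rk}$-optimal. Accordingly the strict inequality $d<d_{\mathrm{MRD}}$ is available exactly when $n<km$, and the statement is to be read with the one-weight simplex code set aside; since $1<k$ makes it the only one-weight MRD code in this regime, it is the single configuration requiring separate treatment. In every remaining case the chain $d\le h<m+1-\tfrac{mk}{n}$ (respectively $d\le s<n-k+1$) shows that equality in~\eqref{eq:boundgen} fails, so $\C$ is not MRD.
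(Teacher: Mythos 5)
Your overall strategy coincides with the paper's: use the Singleton-like bound \eqref{eq:boundgen} to pin down the minimum distance that an MRD code must have, use Remark~\ref{rmk:nsmallandsgeqd} (resp.\ Remark~\ref{rmk:nlargeandsgeqd}) to bound the minimum distance of an $L_{\rk}$-optimal code above by $s$ (resp.\ $h$), and derive a numerical contradiction. The differences are organizational. For $n\le m$ your version is slightly leaner than the paper's, since you only need $d\le s$ rather than first deducing $d=s$ from the MRD weight spectrum, and the chain $n\bigl(1-2^{-(k-1)}\bigr)\ge 2^k-2>k-1$ is correct. For $n>m$ the paper splits on $t=0$ versus $t\ge 1$ and reaches the contradictions $n\le 2$ and $t>k-1$ respectively, whereas you reformulate the goal as $mk<n\,L_{\rk}$, equivalently $a(m-h+1)>m(t+2)$ after discarding the nonnegative term $pm(m-h)$ with $p=k-t-2$, and study the quadratic $a(m+1)-a^2/2^{t+1}$ on $[m+1,2m]$. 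This works: for $t\ge 1$ the minimum is at $a=m+1$ and $(m+1)^2\bigl(1-2^{-(t+1)}\bigr)>m(t+2)$ under the constraint $2^{t+1}\le m$ forced by $h\ge 2$ and $a\le 2m$; for $t=0$ the minimum over $a\le 2m-1$ is $3m-\tfrac{3}{2}>2m$, and the only failure is the endpoint $a=2m$, i.e.\ $n=km$.

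That endpoint is the substantive part of your proposal, and you are right to flag it: it is not a defect of your argument but a gap in the theorem as printed. For $n=km$ the simplex code is nondegenerate with $1<k<n$, is $L_{\rk}$-optimal (it has the single weight $m$ and $L_{\rk}(km,m,k,q)=1$), and attains equality in \eqref{eq:boundgen} since $mk=km\,(m-m+1)$; so it is simultaneously $L_{\rk}$-optimal and MRD under the paper's definition. The paper's own proof misses this case: in Case~1 ($t=0$) it deduces $n\le 2$ from $(km-n)/n\ge(km-n)/2$, which implicitly divides by $km-n$ and is vacuous when $n=km$. The statement should therefore assume $n<km$ or explicitly except the simplex code, exactly as you propose; with that caveat made explicit, your proof is complete.
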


\begin{proof}
Assume by contradiction that $\C$ is an MRD code. We divide our discussion in two cases.

\textbf{\underline{Case: \( n \leq m \).}}  By definition of MRD codes, we have:
\[
mk = m(n - d + 1),
\]
which implies
\[
k = n - d + 1.
\]
Since \( n \geq k + 1 \), it follows that \( d \neq 1 \). Moreover, by~\cite[Lemma 2.1]{lunardon2018nuclei}, the weight spectrum of an MRD code is given by
\[
\mathrm{WS}(\C) = \{d, d+1, \ldots, n\}.
\]
Now, assuming \( \C \) is \( L_{\rk} \)-optimal, we must have
\[
|\mathrm{WS}(\C)| = n - d + 1 = L_{\rk}(n,m,k,q) = n - s + 1,
\]
so that \( d = s \). Since \( d \neq 1 \), we conclude that
\[
d = s = \left\lfloor \frac{n}{2^{k-1}} \right\rfloor \geq 2.
\]
This yields:
\[
\frac{n}{2^{k-1}} \geq \left\lfloor \frac{n}{2^{k-1}} \right\rfloor = d = n - k + 1,
\]
and hence
\[
2^{k-1}(n - k + 1) \leq n.
\]
Rearranging, we get
\[
n(2^{k-1} - 1) \leq 2^{k-1}(k - 1).
\]
Since \( n \geq k + 1 \), it follows that
\[
(k + 1)(2^{k-1} - 1) \leq 2^{k-1}(k - 1),
\]
which implies
\[
2^k \leq k + 1,
\]
a contradiction, as \( 2^k > k + 1 \) for all \( k \geq 2 \). 
This shows that, for \( n \leq m \), an \( L_{\rk}(n,m,k,q) \)-optimal \( [n,k]_{q^m/q} \) code cannot be MRD. 
\\

\textbf{\underline{Case: \( n > m \).}} In this case, recall that
\[
a = n - (k - t - 2)m, \quad \text{and} \quad t = \left\lfloor \frac{km - n}{m} \right\rfloor.
\]
Since \( \C \) is an MRD code, by the Singleton-like bound, we have:
\[
km = n(m - d + 1),
\]
which yields
\begin{equation} \label{eq:rewritemindistn>m}
m - d = \frac{km - n}{n}.
\end{equation}

Since \( n \geq k + 1 \), it follows that \( d \neq 1 \). Moreover, by~\cite[Lemma 2.1]{lunardon2018nuclei}, the weight spectrum of an MRD code is:
\[
\mathrm{WS}(\C) = \{ d, d+1, \ldots, m \}.
\]
Assuming \( \C \) is also \( L_{\rk} \)-optimal, we obtain
\[
|\mathrm{WS}(\C)| = m - d + 1 = L_{\rk}(n, m, k, q) = m - h + 1,
\]
and hence \( d = h \). Since \( d \neq 1 \), we conclude that
\[
d = h = \left\lfloor \frac{a}{2^{t+1}} \right\rfloor \geq 2.
\]
\textbf{Case 1:} \( t = 0 \). Then
\begin{equation}
\label{eq:m-dgeq 2m-a/2}
\begin{aligned}
m - d &= m - \left\lfloor \frac{a}{2} \right\rfloor \\
&\geq m - \frac{a}{2} \\
&= \frac{2m - a}{2}.
\end{aligned}
\end{equation}
Since \( a = n - (k - 2)m \), we have
\[
\frac{2m - a}{2} = \frac{2m - (n - (k - 2)m)}{2} = \frac{km - n}{2}.
\]
Substituting into~\eqref{eq:rewritemindistn>m}, together with \eqref{eq:m-dgeq 2m-a/2}, we get:
\[
\frac{km - n}{n} = m - d \geq \frac{km - n}{2},
\]
which implies \( n \leq 2 \), a contradiction.\\
\textbf{Case 2:} \( t \geq 1 \). From~\eqref{eq:rewritemindistn>m}, we again have
\[
d = m - \frac{km - n}{n}, \quad \text{and} \quad d = \left\lfloor \frac{a}{2^{t+1}} \right\rfloor \leq \frac{a}{2^{t+1}}.
\]
Combining, we get
\[
m - \frac{km - n}{n} \leq \frac{a}{2^{t+1}} \leq \frac{2m}{2^{t+1}} = \frac{m}{2^t},
\]
by using Proposition \ref{prop:m<a leq 2m}, which gives \( a \leq 2m \). Therefore,
\[
2^t\left( m - \frac{km - n}{n} \right) \leq m \]
from which 
\[
m \leq \frac{2^t}{2^t - 1} \cdot \frac{km - n}{n}.
\]
Using the inequality \( \frac{2^t}{2^t - 1} < 2 \) (valid for \( t \geq 1 \)), we derive
\[
m < 2 \cdot \frac{km - n}{n} \]
that yields
\[n < \frac{2km}{m + 2} < 2k.
\]
Now consider \[ t = \left\lfloor \frac{km - n}{m} \right\rfloor > \left\lfloor \frac{km - 2k}{m} \right\rfloor \geq k - 1 ,\]
since \( n < 2k \). This contradicts the definition of \( t \), which must satisfy \( t \leq k - 2 \). 

\end{proof}

\subsection{On duality}

In this section, we investigate whether the family of \( L_{\rk} \)-optimal codes is closed under duality. Specifically, we ask whether the property of attaining the maximum number of nonzero weights is preserved under dual operation.

Recall that the \textbf{dual code} of an \( [n,k]_{q^m/q} \) code \( \C \) is defined as
\[
\C^\perp = \left\{ (d_1,\ldots,d_n) \in \F_{q^m}^n \,\middle|\, \sum_{i=1}^n c_i d_i = 0 \text{ for every } (c_1,\ldots,c_n) \in \C \right\}.
\]

When referring to the parameters \( \mu, t, a, s \), and \( h \) of the dual code \( \C^{\perp} \) of \( \C \), we will denote them by \( \mu^{\perp}, t^{\perp}, a^{\perp}, s^{\perp} \), and \( h^{\perp} \), respectively.

We divide our discussion into two cases: \( n \leq m \) and \( n > m \). We begin with the first case by introducing two auxiliary lemmas.

\begin{lemma}[see \textnormal{\cite{alfarano2022linear}, Proposition 3.2.]}] \label{lm:conditionnondegdual}
An $[n,k]_{q^m/q}$ code $\C$, with $n>k$ is nondegenerate if and only if the minimum distance of its dual $\C^{\perp}$ satisfies $d(\C^{\perp}) > 1$.
\end{lemma}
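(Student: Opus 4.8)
The plan is to prove the logically equivalent statement that $\C$ is \emph{degenerate} if and only if $\C^{\perp}$ contains a nonzero codeword of rank weight $1$. First I would record the role of the hypothesis $n>k$: it guarantees $\dim_{\F_q}(\C^{\perp}) = (n-k)m \geq 1$ over $\F_q$ (equivalently $\dim_{\F_{q^m}}(\C^{\perp}) = n-k \geq 1$), so $\C^{\perp}$ is a nonzero code and its minimum distance $d(\C^{\perp})$ is a well-defined positive integer. Consequently $d(\C^{\perp}) > 1$ is equivalent to the absence of weight-$1$ codewords in $\C^{\perp}$, and this is the only point where $n>k$ enters.

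Next I would characterize the rank weight-$1$ vectors. A nonzero vector $\mathbf{d} = (d_1, \ldots, d_n) \in \F_{q^m}^n$ satisfies $\w(\mathbf{d}) = 1$ precisely when all its entries lie in a common $1$-dimensional $\F_q$-subspace of $\F_{q^m}$, i.e. $\mathbf{d} = \gamma \vv$ for some $\gamma \in \F_{q^m}^{*}$ and some nonzero $\vv = (v_1, \ldots, v_n) \in \F_q^n$. Substituting into the defining condition of $\C^{\perp}$ and using that the pairing $\sum_{i=1}^n c_i d_i$ is $\F_{q^m}$-bilinear together with $\gamma \neq 0$, membership $\mathbf{d} \in \C^{\perp}$ becomes $\sum_{i=1}^n v_i c_i = 0$ for every $\cc = (c_1, \ldots, c_n) \in \C$.

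Then I would translate this into a statement about a generator matrix $G$ of $\C$, whose $\F_q$-column span is, by definition, what controls nondegeneracy. Writing the columns of $G$ as $\uu_1, \ldots, \uu_n \in \F_{q^m}^k$ and every codeword as $\cc = \xx G$ with $\xx \in \F_{q^m}^k$, the condition $\sum_{i=1}^n v_i c_i = 0$ for all $\cc \in \C$ is equivalent, upon running $\xx$ over all of $\F_{q^m}^k$, to $\sum_{i=1}^n v_i \uu_i = \mathbf{0}$, that is, to a nontrivial $\F_q$-linear dependence among the columns of $G$. Hence a nonzero weight-$1$ codeword of $\C^{\perp}$ exists if and only if the $\F_q$-column span of $G$ has dimension strictly less than $n$, which is exactly the negation of nondegeneracy. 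Combining this with the first step yields the claim.

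The argument is a chain of equivalences, so there is no single hard step; the only points requiring care are pulling out the common factor $\gamma$ so as to decouple it from the $\F_q$-coefficients $v_i$, and correctly exchanging the universal quantifier over codewords $\cc = \xx G$ (equivalently over $\xx \in \F_{q^m}^k$) for the single vanishing condition $\sum_i v_i \uu_i = \mathbf{0}$ on the column combination.
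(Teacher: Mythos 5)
Your argument is correct: the decomposition of a rank-weight-$1$ vector as $\gamma\vv$ with $\gamma\in\F_{q^m}^{*}$ and $\vv\in\F_q^n$ nonzero, the reduction of $\mathbf{d}\in\C^{\perp}$ to $\sum_i v_i c_i=0$ for all $\cc\in\C$, and the exchange of the quantifier over $\xx\in\F_{q^m}^k$ for the single relation $\sum_i v_i\uu_i=\mathbf{0}$ on the columns of $G$ are all sound, and together they establish exactly the contrapositive equivalence between degeneracy and the existence of a weight-$1$ dual codeword. Note that the paper does not prove this lemma at all --- it imports it by citation from Proposition 3.2 of the reference on linear cutting blocking sets --- so there is no in-paper proof to compare against; your argument is the standard self-contained one, and your remark that $n>k$ serves only to make $d(\C^{\perp})$ well defined is the right way to isolate the role of that hypothesis.
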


\begin{lemma} \label{lm:numbertheorylemma}
Let $n \geq 5$, and let $a,b$ be two integers such that $a+b=n$. Then $\frac{n}{2^{a-1}} < 2$ or $\frac{n}{2^{b-1}} < 2$.
\end{lemma}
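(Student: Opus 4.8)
The plan is to reduce the statement to a single elementary comparison between an exponential and a linear quantity. First I would rewrite each of the two target inequalities: since $2^{a-1} > 0$, the condition $\frac{n}{2^{a-1}} < 2$ is equivalent to $n < 2^a$, and likewise $\frac{n}{2^{b-1}} < 2$ is equivalent to $n < 2^b$. Thus the assertion that at least one of them holds is equivalent to
\[
n < \max\{2^a, 2^b\} = 2^{\max\{a,b\}}.
\]
The statement is symmetric in $a$ and $b$, so without loss of generality I assume $a \geq b$. Then $2a \geq a+b = n$, hence $a \geq \lceil n/2 \rceil$; in particular $a \geq 3$ is a positive integer because $n \geq 5$. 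Since $x \mapsto 2^x$ is increasing, it suffices to prove the single inequality $n < 2^{\lceil n/2 \rceil}$ for every $n \geq 5$.

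To establish $n < 2^{\lceil n/2 \rceil}$, I would split according to the parity of $n$ so as to keep the base integral. For even $n = 2r$ with $r \geq 3$ the claim is $2^{r} > 2r$, proved by induction: the base $r = 3$ gives $8 > 6$, and the step follows from $2^{r+1} = 2\cdot 2^{r} > 4r > 2(r+1)$ for $r \geq 2$. For odd $n = 2r+1$ with $r \geq 2$ the claim is $2^{r+1} > 2r+1$, again by induction with base $r = 2$ giving $8 > 5$ and an analogous step. Alternatively, one can avoid the parity case split by using $\lceil n/2 \rceil \geq n/2$ and proving the slightly stronger real-variable inequality $2^{n/2} > n$ for $n \geq 5$, i.e. $n > 2\log_2 n$.

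There is no genuine obstacle here: the lemma is an elementary exponential-versus-linear estimate, and the only two points requiring care are the equivalence $\frac{n}{2^{a-1}} < 2 \iff n < 2^a$ and the handling of the ceiling, both disposed of by the reduction above. The one feature worth flagging is that the hypothesis $n \geq 5$ is exactly what the argument needs and is sharp for the worst case $a = b$: for $n = 4$ the balanced split $a = b = 2$ gives $\frac{n}{2^{a-1}} = \frac{4}{2} = 2$, so \emph{neither} strict inequality holds, which is precisely why $2^{\lceil n/2 \rceil} > n$ fails at $n = 4$. Hence I would make explicit that the bound $n \geq 5$ cannot be relaxed.
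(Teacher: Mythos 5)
Your proof is correct and follows essentially the same route as the paper's: both arguments reduce to the fact that $a+b=n$ forces $2^{\max\{a,b\}}\geq 2^{n/2}$ (the paper gets there by contradiction, bounding $2^a+2^b$ from below via convexity of $2^x$, while you argue directly via the max and a short induction on the parity of $n$), combined with the elementary inequality $2^{n/2}>n$ for $n\geq 5$. Your closing observation that $n=4$, $a=b=2$ shows the hypothesis $n\geq 5$ is sharp is a correct addition not made explicit in the paper.
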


\begin{proof}
Assume that both $\frac{n}{2^{a-1}} \geq 2$ and $\frac{n}{2^{b-1}} \geq 2$. Then simple calculations yield
\begin{align*}
n &\geq 2^{a-1}+2^{b-1}\\
2n &\geq 2^a + 2^b \\
n&\geq \frac{2^a + 2^b}{2}\geq 2^{(a+b)/2} \text{$\quad$ (by convexity of $f(x) = 2^{x}$)}\\
n &\geq 2^{n/2} \text{$ \quad$ (since $a+b=n$)},
\end{align*}
which yields a contradiction as soon as $n \geq 5$.
\end{proof}

\begin{theorem} \label{th:dualn<m}
Let $5 \leq n \leq m$, and let $\C$ be a nondegenerate $L_{\rk}$-optimal $\nkdqm$ code. Then $\C^{\perp}$ is either degenerate or is not an $L_{\rk}$-optimal code.
\end{theorem}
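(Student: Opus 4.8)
The plan is to argue by contradiction and to reduce the statement to a purely arithmetic incompatibility between the optimality constraints on $\C$ and on $\C^{\perp}$. Since being $L_{\rk}$-optimal already presupposes nondegeneracy, negating the conclusion amounts to assuming that $\C^{\perp}$ is $L_{\rk}$-optimal (hence, in particular, nondegenerate). Under this assumption I would show that both $s$ and $s^{\perp}$ are forced to be at least $2$, and then rule this out via Lemma \ref{lm:numbertheorylemma}.

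First I would translate the two nondegeneracy hypotheses into lower bounds on the minimum distances. Because $\C$ is nondegenerate and $n>k$, Lemma \ref{lm:conditionnondegdual} gives $d(\C^{\perp})>1$. Dually, since $(\C^{\perp})^{\perp}=\C$ and $\C^{\perp}$ is nondegenerate (with $n>n-k$), the same lemma applied to $\C^{\perp}$ gives $d(\C)>1$. Next I would feed these into Remark \ref{rmk:nsmallandsgeqd}, which guarantees $s\ge d$ for any $L_{\rk}$-optimal code of length $n\le m$. Applying it to $\C$ yields $s\ge d(\C)\ge 2$, hence $\lfloor n/2^{k-1}\rfloor\ge 2$ and therefore $n/2^{k-1}\ge 2$; applying it to the $[n,n-k]_{q^m/q}$ code $\C^{\perp}$ (whose defining exponent is $2^{(n-k)-1}$) yields $s^{\perp}\ge d(\C^{\perp})\ge 2$, hence $n/2^{n-k-1}\ge 2$. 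Finally I would invoke Lemma \ref{lm:numbertheorylemma} with $a=k$ and $b=n-k$, so that $a+b=n\ge 5$: the lemma forces at least one of $n/2^{k-1}$ and $n/2^{n-k-1}$ to be strictly smaller than $2$, directly contradicting the two inequalities just obtained.

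The substantive step — rather than a genuine technical obstacle — is recognizing that the two independent optimality conditions, once passed through the estimate $s\ge d$ of Remark \ref{rmk:nsmallandsgeqd} together with the duality relation of Lemma \ref{lm:conditionnondegdual}, collapse to the simultaneous requirements $n/2^{k-1}\ge 2$ and $n/2^{n-k-1}\ge 2$, and that the convexity bound underlying Lemma \ref{lm:numbertheorylemma} (which would give $n\ge 2^{n/2}$ if both held) is precisely what excludes them once $n\ge 5$. I expect no real difficulty beyond noting the boundary case $n-k=1$, where $\C^{\perp}$ is one-dimensional: there $s^{\perp}=n\ge 2$ holds trivially, while the requirement $s\ge 2$ becomes $n/2^{n-2}\ge 2$, which already fails for $n\ge 5$, so the argument goes through uniformly and the threshold $n\ge 5$ is seen to be exactly the point where the lemma's convexity estimate becomes effective.
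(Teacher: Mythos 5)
Your proposal is correct and follows essentially the same route as the paper's proof: assume $\C^{\perp}$ is nondegenerate and $L_{\rk}$-optimal, use Lemma \ref{lm:conditionnondegdual} in both directions to get $d(\C)>1$ and $d(\C^{\perp})>1$, pass through Remark \ref{rmk:nsmallandsgeqd} to force $s\geq 2$ and $s^{\perp}\geq 2$, and contradict Lemma \ref{lm:numbertheorylemma} applied to $k+(n-k)=n\geq 5$. Your extra remark on the boundary case $n-k=1$ is harmless but unnecessary, since that case is already covered uniformly by the lemma.
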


\begin{proof}
Assume that $\C^{\perp}$ is nondegenerate. We write $s = \max\left\{1,\lf n/2^{k-1} \rf \right\}$ as well as $s^{\perp} = \max\left\{1, \lf n/2^{n-k-1} \rf \right\}$. Suppose by contradiction that $\lvert \mathrm{WS}(\mathcal{C}^{\perp})\lvert=n-s^{\perp}+1$. Since both $\C$ and $\C^{\perp}$ are nondegenerate, by Lemma \ref{lm:conditionnondegdual}, we have $d=d(\mathcal{C})> 1$ and $d(\C^{\perp}) > 1$.

Since $\C$ and $\C^{\perp}$ are $L_{\rk}$-optimal, by Remark \ref{rmk:nsmallandsgeqd} we have $s\geq d(\mathcal{C})\geq 2$ and $s^{\perp}\geq d(\mathcal{C}^{\perp})\geq 2$. Therefore, $s = \lf n/2^{k-1} \rf$ and $s^{\perp} = \lf n/2^{n-k-1} \rf$. Since $n \geq 5$, according to Lemma \ref{lm:numbertheorylemma}, at least one of $s$ and $s^{\perp}$ is equal to $1$, a contradiction. Therefore, if $\C^{\perp}$ is nondegenerate, then it cannot be $L_{\rk}$-optimal.
\end{proof}

\begin{remark} \label{rem:counterexample}
Note that the statement of our theorem leaves only one interesting case, i.e. $n = 4$ and $k=2$, as whenever $n \leq 3$ either $\C$ or its dual have dimension $1$ (and the same is true if $n=4$ and $k \neq 2$).

In the special case \( n = 4 \) and \( k = 2 \), by Proposition \ref{prop:existenceofLoptimal}, we know that \( L_{\rk} \)-optimal codes always exist and have a weight spectrum of size $n-s+1=4-\lfloor\frac{4}{2}\rfloor+1=3$. Moreover, if \( \C^\perp \) is also \( L_{\rk} \)-optimal, then its weight spectrum must likewise have size $n-s^{\perp}+1=4-\lfloor\frac{4}{2}\rfloor+1=3$. Since we require both \( \C \) and \( \C^\perp \) to be nondegenerate, the weight 1 is not allowed in either spectrum by Lemma \ref{lm:conditionnondegdual}. Therefore, the only possible weight spectrum for both codes is \( \{2, 3, 4\} \).

Taking
$$G = \begin{bmatrix}
1 & \lambda & 0 & 0 \\
0 & 0 & 1 & \lambda \\
\end{bmatrix}$$
and
$$G' = \begin{bmatrix}
\lambda & -1 & 0 & 0 \\
0 & 0 & \lambda & -1 \\
\end{bmatrix},$$
it is clear that the codes $\C$ and $\C'$ with generator matrices respectively $G$ and $G'$ are the dual of each other, and have the required weight spectrum $\{2,3,4\}$. This does not depend on the choice of $q$ and $m$ (as long as $m \geq 4$). Hence, in this case $\C$ and $\C^{\perp}$ are both $L_{\rk}$-optimal codes.
\end{remark}

Now, we turn on the case $n>m$. In this case, we have the following result.

\begin{theorem}
\label{th:dualn>m}
Let $n > m$ and assume that $m \geq 4$ or that $m=3$ and $n \geq 5$.
Let $\C$ be a nondegenerate $L_{\rk}$-optimal $\nkdqm$ code. Then $\C^{\perp}$ is either degenerate or is not an $L_{\rk}$-optimal code.
\end{theorem}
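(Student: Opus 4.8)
The plan is to mirror the proof of Theorem~\ref{th:dualn<m}, replacing the dimension-splitting identity $k+(n-k)=n$ used there by its $n>m$ analogue. First I would dispose of the case in which $\C^{\perp}$ is degenerate, since that is one of the two permitted conclusions; so assume $\C^{\perp}$ is nondegenerate. As $\C$ is nondegenerate as well, applying Lemma~\ref{lm:conditionnondegdual} to both $\C$ and $\C^{\perp}$ yields $d(\C)\ge 2$ and $d(\C^{\perp})\ge 2$.

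The crucial structural observation is that the parameter $a$ depends only on $n$ and $m$: since $t=k-\lceil n/m\rceil$ one has $k-t=\lceil n/m\rceil$, and hence $a=n-(\lceil n/m\rceil-2)m$, with no dependence on $k$. Consequently the corresponding quantity for the dual satisfies $a^{\perp}=a$, while $t^{\perp}=(n-k)-\lceil n/m\rceil$, so that $t+t^{\perp}=n-2\lceil n/m\rceil$. This identity plays the role that $k+(n-k)=n$ played in the regime $n\le m$.

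Now suppose, for contradiction, that $\C^{\perp}$ is also $L_{\rk}$-optimal. By Remark~\ref{rmk:nlargeandsgeqd} applied to each code, $h\ge d(\C)\ge 2$ and $h^{\perp}\ge d(\C^{\perp})\ge 2$; unwinding the definition of $h$ and using $a^{\perp}=a$, these read $a\ge 2^{t+2}$ and $a\ge 2^{t^{\perp}+2}$, hence $a\ge 2^{\max\{t,t^{\perp}\}+2}$. Since $\max\{t,t^{\perp}\}\ge\lceil (t+t^{\perp})/2\rceil=\lceil (n-2\lceil n/m\rceil)/2\rceil$ and $a\le 2m$ by Proposition~\ref{prop:m<a leq 2m}, I would obtain the numerical chain
\[
2^{\lceil (n-2\lceil n/m\rceil)/2\rceil+2}\ \le\ a\ \le\ 2m,
\]
and show that it cannot hold under the stated hypotheses. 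This is the content of a number-theoretic lemma in the spirit of Lemma~\ref{lm:numbertheorylemma}: a convexity (AM--GM) estimate applied to $2^{t+1},2^{t^{\perp}+1}\le m$ settles all sufficiently large $m$ (already from $n>m$, via $n-2\lceil n/m\rceil\ge m-3$), and the finitely many remaining pairs $(m,n)$ with $m\in\{3,4,5,6\}$ are checked directly using the exact value $a=n-(\lceil n/m\rceil-2)m$ rather than the bound $a\le 2m$.

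The main obstacle is precisely this final numerical step: the asymptotic convexity bound is too weak for small $m$ and for $n$ just above $m$ (where $n-2\lceil n/m\rceil$ is smallest), so those cases must be dispatched by an explicit finite verification, with care taken over the floor/ceiling rounding. This is also where the hypothesis is sharp: for $m=3$, $n=4$ one computes $t=t^{\perp}=0$ and $a=4=2^{2}$, so $h=h^{\perp}=2$ and both $\C$ and $\C^{\perp}$ may be $L_{\rk}$-optimal. This is exactly the exceptional configuration excluded by requiring $n\ge 5$ when $m=3$, in direct analogy with the genuine counterexample at $n=4$, $k=2$ of Remark~\ref{rem:counterexample}.
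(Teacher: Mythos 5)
Your proposal is correct and follows essentially the same route as the paper's proof: both reduce to the observation that $a^{\perp}=a$ (the paper writes $a=2m-r$ with $r\equiv -n \pmod m$), compute $t+t^{\perp}=n-2\lceil n/m\rceil$, combine the two inequalities $a\ge 2^{t+2}$ and $a\ge 2^{t^{\perp}+2}$ by an averaging step (the paper via convexity of $2^{x+2}$, you via $\max\{t,t^{\perp}\}\ge\lceil(t+t^{\perp})/2\rceil$), bound $a\le 2m$ by Proposition~\ref{prop:m<a leq 2m}, and finish with a finite verification for small $m$ that isolates the exceptional case $m=3$, $n=4$. The only nitpick is the phrase ``finitely many remaining pairs $(m,n)$'': for each $m\in\{3,\dots,6\}$ there are infinitely many $n$, but since $a=2m-r$ depends only on $(m,r)$ and the exponent grows with $n$, the check does reduce to finitely many cases, exactly as in the paper.
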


\begin{proof}
Assume that $\C^{\perp}$ is nondegenerate and is an $L_{\rk}$-optimal code. Let $t = \lf (km-n)/m \rf$ and $t^{\perp}= \lf ((n-k)m-n)/m \rf$, and let $a = n-(k-t-2)m$ and $a^{\perp} = n - ((n-k) - t^{\perp} - 2)m$. Consider $h = \max\left\{1, \lf a/2^{t+1} \rf\right\}$ and $h^{\perp} = \max\left\{1, \lf a^{\perp}/2^{t^{\perp}+1}\rf\right\}$.\\
By Proposition \ref{prop:m<a leq 2m}, we have $m < a \leq 2m$ and $m < a^{\perp} \leq 2m$. Since both $\C$ and $\C^{\perp}$ are nondegenerate and $L_{\rk}$-optimal codes, by arguing as in the proof of \Cref{th:dualn<m}, we get
$$\frac{a}{2^{t+1}} \geq 2 \quad \text{and} \quad \frac{a^{\perp}}{2^{t^{\perp}+1}} \geq 2.$$

Now notice that $a = 2m-r$ as in the proof of Proposition \ref{prop:m<a leq 2m}, where $r = \mu \pmod m = -n \pmod m$.
Note that we also have $\mu^{\perp} = (n-k)m - n$ and therefore $\mu^{\perp}= -n \pmod m = r \pmod m$. This yields
$$a^{\perp} = 2m-r = a.$$

Next we determine the value of $t+t^{\perp}$:
\begin{align*}
t + t^{\perp} &= \frac{\mu - r}{m} + \frac{\mu' - r}{m} \\
&= \frac{km - n - r}{m} + \frac{(n-k)m - n - r}{m} \\
&= n - 2 \frac{n+r}{m} \\
\end{align*}

Now, since we have
$$2m-r \geq 2^{t+2} \quad \text{and} \quad 2m-r \geq 2^{t^{\perp}+2},$$
by convexity of $f(x) = 2^{x+2}$, and writing $b=(t+t^{\perp})/2$, we also have
$$2m-r \geq 2^{b + 2}.$$

Substituting our value of $t+t^{\perp}$ obtained above, and using the fact that $n \geq m+1$, we obtain
\begin{align*}
2m-r &\geq 2^{\frac{n}{2} - \frac{n+r}{m} + 2} \\
2m-r &\geq 2^{\frac{m+1}{2} - \frac{m+1+r}{m} + 2}, \\
\end{align*}
which yields
\begin{equation} \label{eq:ineq}
2m-r \geq 2^{\frac{m+1}{2} - \frac{1+r}{m} + 1}
\end{equation}

To complete our proof, we want \eqref{eq:ineq} to yield a contradiction. Using our last inequality, we get
$$2m \geq 2m-r \geq 2^{\frac{m+1}{2} - \frac{1+r}{m} + 1} \geq 2^{\frac{m+1}{2}}.$$
This yields a contradiction as soon as $m \geq 7$. Also, it can be checked by direct computation that \eqref{eq:ineq} does not hold for $m \in \{4,5,6\}$, for all possible values of $r$, also yielding a contradiction.

For $m=3$, we get a contradiction if $r \in \{0, 1\}$. In the case $m=3$ and $r=2$, using the inequality
$$2m-r \geq 2^{\frac{n}{2} - \frac{n+r}{m} + 2}$$
derived above, straightforward computations give
$$n \leq 4,$$
which yields a contradiction since $n \geq 5$. This completes our proof.
\end{proof}

In the special case $m=3$, $n=4$, there is actually a counter example.

\begin{remark}
Assume $n=4$ and $m=3$. Using the inequalities
$$2m-r \geq 2^{t+2} \quad \text{and} \quad 2m-r \geq 2^{t^{\perp}+2},$$
derived above, we get $t = t' = 0$, which yields $k = n-k = 2$.
Since both $\C$ and $\C^{\perp}$ are nondegenerate and $L_{\rk}$-optimal, in both cases the weight spectrum cannot include $1$, and must therefore be $\{2, 3\}$.

This is actually reached by the same example as in Remark \ref{rem:counterexample}, namely
$$G = \begin{bmatrix}
1 & \lambda & 0 & 0 \\
0 & 0 & 1 & \lambda \\
\end{bmatrix}$$
and
$$G' = \begin{bmatrix}
\lambda & -1 & 0 & 0 \\
0 & 0 & \lambda & -1 \\
\end{bmatrix},$$
where both $\C$ and $\C^\perp$ have weight spectrum $\{2, 3\}$ (since $m=3$), independently of the value of $q$.
\end{remark}

Theorems \ref{th:dualn<m} and \ref{th:dualn>m} imply that, with a few exceptions, the property of being $L_{\rk}$-optimal is not preserved under duality.

\bigskip

\end{document}